\documentclass[11pt]{article}

\usepackage[T1]{fontenc}
\usepackage[utf8]{inputenc}
\usepackage[margin=1in]{geometry}
\usepackage{lmodern}
\usepackage{amsmath,amssymb,amsthm,mathtools,bm}
\usepackage{booktabs,array,tabularx,multirow}
\usepackage{enumitem}
\usepackage{microtype}
\usepackage{xcolor}
\usepackage{graphicx}
\usepackage{caption}
\usepackage{subcaption}
\usepackage{algorithm}
\usepackage[noend]{algpseudocode}
\usepackage{tikz}
\usetikzlibrary{arrows.meta,positioning,fit,calc,shapes.geometric}
\usepackage[numbers,sort&compress]{natbib}
\usepackage[hidelinks]{hyperref}
\usepackage[nameinlink,capitalize,noabbrev]{cleveref}
\usepackage{xurl}

\newcommand{\R}{\mathbb R}
\newcommand{\E}{\mathbb E}
\newcommand{\Pp}{\mathbb P}
\newcommand{\norm}[1]{\left\lVert#1\right\rVert}
\newcommand{\ip}[2]{\left\langle#1,#2\right\rangle}

\newcommand{\St}{\operatorname{St}}
\newcommand{\TV}{\operatorname{TV}}
\newcommand{\KL}{\operatorname{KL}}
\newcommand{\Gap}{\operatorname{Gap}}
\newcommand{\logp}{\log_+}
\newcommand{\Tall}{\mathsf T_{\rm all}}
\newcommand{\Tloc}{\mathsf T_{\rm all}^{\rm loc}}

\newcommand{\cF}{\mathcal F}

\newcommand{\argmin}{\operatorname*{arg\,min}}
\newcommand{\argmax}{\operatorname*{arg\,max}}

\newcommand{\proxouter}{\textsc{Localized-Prox}}

\newtheorem{theorem}{Theorem}[section]
\newtheorem{lemma}[theorem]{Lemma}

\newtheorem{corollary}[theorem]{Corollary}

\theoremstyle{definition}
\newtheorem{definition}[theorem]{Definition}

\setlist[itemize]{leftmargin=*,topsep=3pt,itemsep=2pt}
\setlist[enumerate]{leftmargin=*,topsep=3pt,itemsep=2pt}
\renewcommand{\arraystretch}{1.12}
\newcommand{\authornames}{Haihan Zhang, Wendao Wu, Chenheng Zhang, Yanyi Li, Chunyuan Zheng, Cong Fang, Haoxuan Li, Zhouchen Lin}
\newcommand{\affiliation}{Peking University}
\newcommand{\emailA}{colombo.2026@outlook.com}

\newcommand{\mail}[1]{\href{mailto:#1}{\texttt{#1}}}
\newif\ifanonymous
\anonymousfalse
\ifanonymous
\author{Anonymous Authors}
\hypersetup{pdfauthor={Anonymous Authors}}
\else
\author{Wendao Wu$^{1,*}$\quad Haihan Zhang$^{1,*}$\quad Chenheng Zhang$^{1,*}$\\[0.20em]
Yanyi Li$^1$\quad Chunyuan Zheng$^1$\\[0.20em]
Cong Fang$^{1,\dagger}$\quad Haoxuan Li$^{1,\dagger}$\quad Zhouchen Lin$^{1,\dagger}$\\[0.60em]
\small $^1$\affiliation\\[0.35em]
\footnotesize \mail{\emailA} \\[0.35em]
\small $^*$Equal contribution.\quad $^\dagger$Corresponding authors.}
\hypersetup{pdfauthor={\authornames}}
\fi
\newcommand{\ResearchAgentSystem}{our laboratory's internal auto-research system}

\title{\textbf{Sharp Fresh-Gradient Complexity of Nonconvex--Strongly-Concave Minimax Optimization}}
\date{}
\hypersetup{pdftitle={Sharp Fresh-Gradient Complexity of Nonconvex-Strongly-Concave Minimax Optimization}}
\begin{document}
\maketitle

\begin{abstract}
We establish matching upper and lower bounds, up to logarithmic factors, for finding stationary points in smooth nonconvex--strongly-concave minimax optimization with fresh stochastic gradients. The bounds establish the necessity of the linear condition-number dependence in the global stochastic cost and identify additional costs of statistical accuracy and dual initialization. Let $\Phi(x)=\max_y f(x,y)$, where $f$ is jointly $L$-smooth and $\mu$-strongly concave in $y$ on unconstrained Euclidean domains, and write $\kappa=L/\mu$. Given an initial primal gap at most $\Delta$ and dual residual $\|\nabla_y f(x_0,y_0)\|\le G$, the fixed-budget complexity of obtaining $\|\nabla\Phi(\widehat x)\|\le\epsilon$ with probability at least $2/3$ is
\[
\widetilde\Theta\!\left(
\frac{\sqrt\kappa L\Delta}{\epsilon^2}
+\frac{\kappa L\Delta\sigma^2}{\epsilon^4}
+\frac{\kappa^2\sigma^2}{\epsilon^2}
+\sqrt\kappa\log_+\frac{G}{\epsilon\sqrt\kappa}
\right),
\]
where each query returns a fresh unbiased joint gradient with conditional variance at most $\sigma^2$, and $\log_+u=\log\max\{1,u\}$. The characterization holds when $\kappa$ and $L\Delta/\epsilon^2$ exceed universal constants, against arbitrary adaptive randomized algorithms and uniformly over finite dimensions. The suppressed logarithms do not depend on $G$. Dual initialization enters only logarithmically and additively, yet removing its control eliminates every finite dimension-free bound, even with exact gradients. The upper bound separates coarse primal progress from one final statistical refinement; the lower bounds show why both costs are necessary while preserving strong concavity under unrestricted queries.
\end{abstract}
\noindent\textbf{Keywords:} minimax optimization; oracle complexity; stochastic gradients; randomized lower bounds; dual initialization.

\begin{samepage}
\begingroup\frenchspacing\looseness=-1
\noindent\textbf{AI Usage.}\ 
Nearly the entire research pipeline for this paper was carried out by Colombo, 
\ResearchAgentSystem{}, powered by GPT-5.6 Sol. The system also conducted a
Lean-backed article audit of the resulting manuscript. The authors subsequently reviewed and approved the
mathematical claims, presentation, and formal artifacts, and take
responsibility for the final manuscript. The complete Lean audit report and the system's technical report will be made public at a later date.\par
\endgroup
\end{samepage}

\paragraph{Publication Intent and Recognition of Independent Work.} 
This preprint is released as a research output of our study of
automated scientific research, for public scrutiny and discussion.
As we do not intend to submit this manuscript to a journal or
conference, it has not undergone the full editorial refinement
and internal review we would normally require for submission.
The authors remain responsible for its contents and for correcting
any errors or inaccurate attribution.

We recognize that releasing these results may affect researchers
who have independently been pursuing the same questions, including
work that is not yet public.
If this applies to your work, we invite you to contact us at
\texttt{colombo.2026@outlook.com}.
A brief description of the overlap, together with any dated
materials you are comfortable sharing, would help us understand
its scope and chronology.
Such materials might include draft excerpts, presentation slides,
or revision records; a complete manuscript or unpublished proof
is not required to begin the conversation.

We will review these cases carefully and update citations,
acknowledgments, and accounts of independent or concurrent work
as appropriate, including revising priority claims where warranted.
Nonpublic materials will be treated confidentially and used only
to assess the overlap unless their authors authorize otherwise.
We will obtain explicit permission from the authors before
publicly discussing such materials or providing them to an
automated research system.

\section{Introduction}

We study the first-order complexity of
\begin{equation}\label{eq:intro-problem}
    \min_{x\in\R^{d_x}}\max_{y\in\R^{d_y}} f(x,y),
\end{equation}
where $f$ is jointly $L$-smooth and $\mu$-strongly concave in $y$, with no convexity assumption on $x$. Strong concavity gives a unique inner maximizer $y^\star(x)$ and a differentiable primal function $\Phi(x)=\max_y f(x,y)$, with $\nabla\Phi(x)=\nabla_x f(x,y^\star(x))$. Our goal is to find $\widehat x$ such that $\|\nabla\Phi(\widehat x)\|\le\epsilon$. Each query supplies one fresh unbiased estimate of the joint gradient, with conditional variance at most $\sigma^2$. Thus, the algorithm must use noisy gradients at primal--dual pairs to make a gradient defined through exact inner maximization small.

Let $\kappa=L/\mu$ and let $\Delta$ bound the initial primal gap $\Phi(x_0)-\inf_x\Phi(x)$. Deterministic proximal acceleration achieves the polynomial rate $\widetilde O(\sqrt\kappa L\Delta/\epsilon^2)$ \citep{LinJinJordan2020}, with matching lower bounds for linear-span and zero-respecting methods \citep{Zhang2021,Li2021}. In the stochastic setting, the known bounds leave a gap in the dependence on $\kappa$. The zero-respecting lower bound of \citet{Li2021} contains the term $\Omega(\kappa^{1/3}L\Delta\sigma^2/\epsilon^4)$ under an expected stationarity criterion. On the upper-bound side, SAPD+ has a leading stochastic term $O(\kappa L\mathcal G_0\sigma^2/\epsilon^4)$, where $\mathcal G_0$ is its initial primal--dual gap, together with a conversion to true primal-gradient stationarity \citep{SAPDPlus2022}. The linear-$\kappa$ upper benchmark is therefore already available. The unresolved issue is whether this dependence is necessary for general randomized algorithms and how much complexity is hidden by the initialization scale.

We answer these questions by characterizing the complexity in terms of separate primal and dual initialization bounds. The resulting rate proves the necessity of the linear-$\kappa$ global stochastic cost, identifies an additional statistical cost that can dominate it, and quantifies the cost of dual initialization. Separating these contributions is essential: neither a leading $\epsilon^{-4}$ rate nor a single initial primal--dual gap describes all three effects.

\paragraph{Why dual initialization matters.}
A primal-gap bound alone gives no finite dimension-free query bound in the nontrivial parameter regime, even for exact gradients. In our construction, a hidden sign along a dual chain determines which primal outputs are stationary. A source at the far endpoint affects the primal gradient by an amount that decays exponentially with the chain length, on the scale $\sqrt\kappa$. Increasing the source compensates for this decay, keeping the hidden sign relevant to stationarity beyond any prescribed query horizon. The construction prevents dense or very large queries from revealing the sign prematurely. Thus, strong concavity ensures a unique inner solution without bounding the information needed to resolve its effect on the primal gradient.

We control the missing scale by requiring $\|\nabla_y f(x_0,y_0)\|\le G$. A finite $G$ limits this compensation for decay, leading to a logarithmic localization cost. Strong concavity and smoothness also allow a dual-distance bound to parameterize initialization; \Cref{sec:model} gives the precise relation.

\paragraph{The complexity and its consequences.}
Let $\Tloc$ be the smallest fixed query budget that guarantees $\|\nabla\Phi(\widehat x)\|\le\epsilon$ with probability at least $2/3$ on every instance with primal gap at most $\Delta$ and initial dual residual at most $G$. We prove
\begin{equation}\label{eq:intro-rate}
    \Tloc=\widetilde\Theta\!\left(
       \frac{\sqrt\kappa L\Delta}{\epsilon^2}
       +\frac{\kappa L\Delta\sigma^2}{\epsilon^4}
       +\frac{\kappa^2\sigma^2}{\epsilon^2}
       +\sqrt\kappa\log_+\frac{G}{\epsilon\sqrt\kappa}
    \right)
\end{equation}
for $\kappa\ge\kappa_0$ and $L\Delta\ge C_0\epsilon^2$, with universal constants $\kappa_0,C_0$. Here $\log_+u=\log\max\{1,u\}$, and the suppressed factors are logarithmic in $\kappa$, $L\Delta/\epsilon^2$, and $\kappa^2\sigma^2/\epsilon^2$, with the dependence on $G$ explicit. The lower bounds apply to arbitrary adaptive randomized algorithms; hard-instance dimensions are finite but may grow with the proposed budget.

The global stochastic term closes the polynomial gap from $\kappa^{1/3}$ to $\kappa$ while removing the zero-respecting restriction. Its dependence on $\Delta$ reflects the cost of making progress across the primal landscape. The term $\kappa^2\sigma^2/\epsilon^2$ captures a different obstruction: even when little global progress is needed, the algorithm must distinguish problems with different acceptable primal outputs. We construct two such problems whose oracle means differ by only $O(\epsilon/\kappa)$ at every query. Noise then makes their distinction require $\Omega(\kappa^2\sigma^2/\epsilon^2)$ calls. This cost exceeds the global stochastic term when $L\Delta/\epsilon^2<\kappa$, so it cannot be discarded in a characterization uniform in both accuracy and condition number.

Dual initialization has a different role again. Although its control is necessary for a finite uniform bound, its cost is additive and logarithmic: increasing $G$ does not multiply either stochastic term. Setting $\sigma=0$ recovers the known deterministic polynomial rate with explicit initialization dependence and an unrestricted randomized lower bound. Together, these conclusions explain what the full characterization adds to a leading-term comparison.

\Cref{tab:comparison} summarizes the principal benchmarks and their comparison conditions. The table distinguishes leading stochastic rates from our uniform characterization; \Cref{sec:related} discusses the methods behind these rates and the role of alternative sampling assumptions.

\begin{table}[t]
\centering
\caption{First-order complexity benchmarks for NC--SC minimax optimization.Rates suppress fixed smoothness, gap, and prior initialization scales, with fixed positive variance in stochastic rows. PG denotes $\|\nabla\Phi\|\le\epsilon$; expected PG is an expected-norm guarantee. Write $D_G=\sqrt\kappa\log_+(G/(\epsilon\sqrt\kappa))$.}
\label{tab:comparison}
\small
\setlength{\tabcolsep}{4pt}
\renewcommand{\arraystretch}{1.18}
\begin{tabularx}{\textwidth}{@{}>{\raggedright\arraybackslash}p{0.22\textwidth}>{\centering\arraybackslash}p{0.19\textwidth}>{\centering\arraybackslash}p{0.19\textwidth}>{\raggedright\arraybackslash}X@{}}
\toprule
Work & Upper bound & Lower bound & Scope and initialization \\
\midrule
\multicolumn{4}{@{}l}{\emph{Exact-gradient queries}}\\
\citet{LinJinJordan2020,Zhang2021}
& $\widetilde O(\sqrt\kappa\epsilon^{-2})$
& $\Omega(\sqrt\kappa\epsilon^{-2})$
& PG; dual initialization enters the upper bound; linear-span lower bound.\\
\textbf{This Paper}\newline (Corollary~\ref{cor:paper-deterministic})
& \multicolumn{2}{c}{\boldmath$\widetilde\Theta(\sqrt\kappa\epsilon^{-2}+D_G)$}
& PG with success $2/3$; residual bound $G$; unrestricted randomized methods.\\
\midrule
\multicolumn{4}{@{}l}{\emph{Fresh stochastic-gradient calls: bounded variance}}\\
Stochastic GDA\newline\citep{LinJinJordanGDA2020}
& $O(\kappa^3\epsilon^{-4})$ & ---
& Expected PG; bounded dual domain; minibatch gradients.\\
Smoothed GDA\newline\citep{YangOrvieto2022}
& $\widetilde O(\kappa^2\epsilon^{-4})$ & ---
& Expected PG after conversion; dual PL; $\epsilon^2\lesssim\kappa^{-3}$ up to logarithms.\\
SAPD+\newline\citep{SAPDPlus2022}
& $O(\kappa\epsilon^{-4})$ & ---
& Leading term at fixed $\kappa$; primal--dual gap; PG conversion available.\\
\citet{Li2021}
& --- & $\Omega\!\left(\begin{gathered}\sqrt\kappa\epsilon^{-2}\\+\kappa^{1/3}\epsilon^{-4}\end{gathered}\right)$
& Expected stationarity; zero-respecting algorithms.\\
\textbf{This Paper}\newline (\Cref{thm:paper-main})
& \multicolumn{2}{c}{\boldmath$\widetilde\Theta\!\left(\begin{gathered}\sqrt\kappa\epsilon^{-2}+\kappa\epsilon^{-4}\\+\kappa^2\epsilon^{-2}+D_G\end{gathered}\right)$}
& PG with success $2/3$ and a fixed budget; primal gap and dual residual; unrestricted methods.\\
\bottomrule
\end{tabularx}
\end{table}

\paragraph{Proof ideas.}
The lower bound must preserve both the information barrier and strong concavity. A generic rotation of a hard instance can mix primal and dual directions and destroy the latter, so we hide the two sets of coordinates separately. Dense queries create another difficulty: their small projections onto hidden directions can leak information. We make the informative links exactly flat below a threshold and modify the objective outside a protected region so that arbitrarily large queries cannot bypass the chain. For the global stochastic term, the oracle reveals the next stage only with probability of order $\epsilon^2/(\kappa\sigma^2)$ when noise dominates. Traversing $\Theta(L\Delta/\epsilon^2)$ stages therefore costs $\Omega(\kappa L\Delta\sigma^2/\epsilon^4)$ queries. Separate local-testing and dual-chain instances supply the other obstructions.

The upper bound separates the accuracy needed for descent from the accuracy needed at the output. Coarse proximal solves suffice to make primal progress; only one selected subproblem needs the finer accuracy that controls the true primal gradient. Paying this refinement once produces the local statistical term without multiplying it by the number of outer stages. To keep initialization additive as well, the inner solver controls gradient-estimation bias relative to its current optimization error. This lets early iterations tolerate coarse estimates while the error contracts; the statistical work is concentrated at the final accuracy, rather than being paid at every initialization-dependent contraction step. \Cref{sec:upper-overview,sec:lower-overview} develop these mechanisms, with formal statements in \Cref{sec:model}.
\section{Related Works}\label{sec:related}

\subsection{Deterministic minimax algorithms and lower bounds}
Two-timescale gradient descent ascent uses faster dual updates to track the moving inner maximizer \citep{LinJinJordanGDA2020}. Proximal acceleration instead reduces NC--SC optimization to strongly-convex--strongly-concave (SCSC) subproblems \citep{LinJinJordan2020}. The lower-bound constructions of \citet{Zhang2021} and \citet{Li2021} explain the difficulty of propagating information through the primal--dual coupling for restricted first-order methods. The former also treats averaged-smooth finite sums. Our lower-bound analysis must preserve this obstruction when queries need not lie in the span of previous gradients.

Proximal reductions also have an earlier stochastic foundation. \citet{RafiqueEtAl2022} develop proximally guided stochastic subgradient and variance-reduced methods for weakly-convex--concave objectives, including nonsmooth problems. Their nearly stationary outer solutions explain the role of proximal subproblems beyond the smooth NC--SC setting. Our analysis uses this reduction principle but requires a true primal-gradient output and explicit fresh-oracle accounting.

\subsection{Stochastic NC--SC optimization and sampling access}
Stochastic gradient descent ascent uses unbiased gradient estimates without requiring common-sample differences \citep{LinJinJordanGDA2020}. SREDA instead couples sample evaluations in a recursive estimator and assumes mean-squared smoothness, obtaining the improved normalized accuracy dependence $O(\kappa^3\epsilon^{-3})$ on a compact dual domain \citep{LuoEtAl2020}. The separation between bounded-variance and mean-squared-smooth access is also central to stochastic nonconvex lower bounds \citep{Arjevani2019}. Our oracle assumes smoothness only of the mean objective and supplies a new gradient estimate at each query.

Alternating updates provide another route without nested maximization. \citet{BotBohm2023} analyze stochastic alternating proximal-gradient steps for composite nonconvex--concave problems and allow unbounded dual domains in the strongly concave setting. Their guarantees retain an initial dual-distance term. \citet{YangOrvieto2022} instead stabilize alternating GDA with a moving quadratic regularizer. Under the weaker dual PL condition, their stochastic method has a leading $O(\kappa^2\epsilon^{-4})$ cost without large minibatches. The additional $\widetilde O(\kappa^5\epsilon^{-2})$ primal-gradient conversion explains the small-accuracy restriction in \Cref{tab:comparison}; the leading term alone is not a uniform bound in both $\kappa$ and $\epsilon$.

SAPD+ addresses weakly-convex--concave composite problems through regularized saddle subproblems \citep{SAPDPlus2022}. In the strongly concave case, its Moreau-envelope guarantee costs $O(\kappa L\mathcal G_0(\epsilon^{-2}+\sigma^2\epsilon^{-4}))$ calls, where $\mathcal G_0$ is an initial primal--dual gap. It also converts this guarantee to a generalized gradient mapping, which is the true primal gradient when the primal nonsmooth term vanishes, with an additional solve of lower order in $\epsilon^{-1}$. Compact domains are not required in its extension to unbounded domains. SGDA-B uses backtracking to obtain guarantees without knowing the smoothness, strong-concavity, or variance parameters \citep{XuEtAl2024}. High-probability results under light-tailed noise and dual Polyak--\L{}ojasiewicz structure \citep{LaguelEtAl2024}, and methods beyond Lipschitz smoothness \citep{GaoLiu2026}, address complementary assumptions.

\subsection{Stochastic saddle solvers and gradient criteria}
\citet{ChenLuo2024} develop recursive anchored iteration to make the joint gradient small in stochastic convex--concave and SCSC problems. \citet{LanLi2023} develop a stochastic Catalyst framework for convex--strongly-concave and SCSC optimization. These works illustrate how regularization separates deterministic contraction from statistical accuracy. The distinction between function accuracy and gradient accuracy is also fundamental in stochastic convex optimization \citep{Foster2019}.

Our outer method requires a particular SCSC guarantee: an expected global saddle gap on an unconstrained domain, random warm starts controlled in expectation, and an initialization cost that does not multiply the statistical term. We prove this interface using finite multilevel estimation, acceleration robust to conditional bias, and recursive regularization. An expected-work guarantee is converted once, at the end of the outer algorithm, to the fixed budget used in our minimax definition. This identifies the extra interface and accounting needed beyond quoting a value-gap or joint-gradient theorem.

\subsection{Randomized lower bounds and neighboring minimax classes}
The oracle-complexity framework originates in information-based lower bounds \citep{NemirovskiYudin1983}. Modern nonconvex constructions use zero chains and random embeddings to hide progress from general first-order algorithms \citep{CarmonEtAl2020,CarmonEtAl2021,Arjevani2019}. These techniques motivate our treatment of unrestricted queries, but the NC--SC geometry requires preserving strong concavity in the designated dual variable. The stochastic construction must also define an unbiased bounded-variance oracle at every query, including queries outside the region in which the chain argument is informative.

In neighboring settings, \citet{PanLi2026} establish a deterministic lower bound with linear condition-number dependence under a dual Polyak--\L{}ojasiewicz condition. \citet{WuGuYang2026} study deterministic and bounded-variance stochastic lower bounds for nonconvex--concave minimax optimization with a bounded dual domain and Moreau-envelope stationarity. Those results concern different geometry or output criteria. Here the dual domain is unconstrained, the dual objective is strongly concave, and both bounds concern the true primal gradient under the same fresh-gradient oracle.
\section{Model and Main Results}\label{sec:model}

We translate the designated initial point to $(x_0,y_0)=(0,0)$. Dimensions are not fixed globally: the problem class is the union over all finite $d_x,d_y$, and a lower-bound construction may choose a finite dimension depending on the candidate query horizon and the problem parameters.

\subsection{Localized NC--SC instances}

\begin{definition}[Fresh-gradient localized NC--SC class]\label{def:paper-class}
For parameters $L\ge\mu>0$, $\Delta,\sigma,G\ge0$, the class $\cF(L,\mu,\Delta,\sigma,G)$ consists of pairs $(f,\mathcal O)$ satisfying the following.
\begin{enumerate}
    \item $f:\R^{d_x}\times\R^{d_y}\to\R$ is differentiable and jointly $L$-smooth:
    \[
       \|\nabla f(z)-\nabla f(z')\|\le L\|z-z'\|\qquad\forall z,z'.
    \]
    \item For every $x$, the map $y\mapsto f(x,y)$ is $\mu$-strongly concave.
    \item With $\Phi(x)=\max_y f(x,y)$,
    \[
       \Phi(0)-\inf_x\Phi(x)\le\Delta,
       \qquad \|\nabla_y f(0,0)\|\le G.
    \]
    \item The oracle is a Markov kernel fixed before the interaction. At query $z_t=(x_t,y_t)$, it returns a fresh vector $\mathsf G_t$ such that, conditionally on the entire past transcript,
    \[
       \E[\mathsf G_t\mid\mathcal F_{t-1},z_t]=\nabla f(z_t),
       \qquad
       \E[\|\mathsf G_t-\nabla f(z_t)\|^2\mid\mathcal F_{t-1},z_t]\le\sigma^2.
    \]
\end{enumerate}
There is no finite-sum index, same-sample multi-point evaluation, proximal oracle, or Hessian-vector product.
\end{definition}

Let $y^\star(x)=\argmax_y f(x,y)$. Strong concavity and smoothness imply
\begin{equation}\label{eq:localization-equivalence-main}
    \mu\|y_0-y^\star(x_0)\|
    \le \|\nabla_y f(x_0,y_0)\|
    \le L\|y_0-y^\star(x_0)\|.
\end{equation}
Thus residual and distance localization are equivalent up to factors of $\kappa=L/\mu$ inside logarithms.

\subsection{Unrestricted randomized algorithms and complexity}

\begin{definition}[Algorithm class]\label{def:paper-alg}
An algorithm has an internal random seed independent of the instance and oracle randomness. At time $t$, its query is an arbitrary measurable function of the seed and the preceding query--response transcript. Its final primal output is another arbitrary measurable function of the full transcript. Queries may be dense, nonlinear, and of unbounded norm. No zero-respecting, linear-span, Krylov, obliviousness, or determinism restriction is imposed.
\end{definition}

\begin{definition}[Localized minimax complexity]\label{def:paper-complexity}
$\Tloc(L,\mu,\Delta,\sigma,G,\epsilon)$ is the least fixed integer $N$ for which some algorithm in \Cref{def:paper-alg}, using at most $N$ oracle calls, satisfies
\[
    \Pp\bigl(\|\nabla\Phi(\widehat x)\|\le\epsilon\bigr)\ge\frac23
\]
on every instance in $\cF(L,\mu,\Delta,\sigma,G)$ and every finite dimension. If no such $N$ exists, the complexity is $+\infty$.
\end{definition}

The fixed-budget requirement is important. An algorithm with an expected running time is not automatically admissible; our upper proof ends with a global truncation that converts expected total work to a deterministic call budget.

For $\epsilon>0$, write $\kappa=L/\mu$ and $\log_+u=\log\max\{1,u\}$ for $u\ge0$. Define
\begin{equation}\label{eq:ABCD-main}
\begin{aligned}
    A&:=\frac{\sqrt\kappa L\Delta}{\epsilon^2},
    &B&:=\frac{\kappa L\Delta\sigma^2}{\epsilon^4},\\
    C&:=\frac{\kappa^2\sigma^2}{\epsilon^2},
    &D_G&:=\sqrt\kappa\log_+\!\left(\frac{G}{\epsilon\sqrt\kappa}\right).
\end{aligned}
\end{equation}

\subsection{Sharp localized complexity}

\begin{theorem}[Sharp localized complexity]\label{thm:paper-main}
There are universal constants $c,C_0,C_1,\kappa_0>0$ such that, whenever $\kappa\ge\kappa_0$ and $L\Delta\ge C_0\epsilon^2$,
\begin{equation}\label{eq:paper-main-lower}
    \Tloc(L,\mu,\Delta,\sigma,G,\epsilon)
    \ge c\,(A+B+C+D_G).
\end{equation}
Conversely, let
\[
    \Lambda
    =1+\log(2+\kappa)
      +\log\!\left(2+\frac{L\Delta}{\epsilon^2}\right)
      +\log\!\left(2+\frac{\kappa^2\sigma^2}{\epsilon^2}\right).
\]
There is a fixed-budget fresh-gradient algorithm such that
\begin{equation}\label{eq:paper-main-upper}
\Tloc
\le C_1\Lambda^{10}\left[
 A+B+C+
 \sqrt\kappa\log\!\left(
 2+\frac{\kappa^2G^2}{\epsilon^2}
   +\frac{\kappa^2L\Delta}{\epsilon^2}
 \right)
\right].
\end{equation}
The prefactor $\Lambda^{10}$ is independent of $G$. Consequently,
\[
    \Tloc=\widetilde\Theta(A+B+C+D_G).
\]
\end{theorem}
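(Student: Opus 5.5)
The proof splits into a lower bound and an upper bound. Since $\max\{A,B,C,D_G\}\ge\frac14(A+B+C+D_G)$, the lower bound \eqref{eq:paper-main-lower} follows from four separate instance families, each forcing one term.

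\textbf{Lower bound.} Each family must lie in $\cF(L,\mu,\Delta,\sigma,G)$ and defeat arbitrary randomized algorithms.

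For $A$, we take the NC--SC zero-chain of \citet{Zhang2021,Li2021}: a primal chain coupled to a dual chain whose propagation costs $\sqrt\kappa$ queries per primal link, with $\Theta(L\Delta/\epsilon^2)$ links. To pass from span-restricted to unrestricted algorithms, we hide the primal and dual coordinates by \emph{separate} random orthogonal embeddings. This keeps strong concavity in $y$ intact.

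Two protections handle dense queries. First, links are made exactly flat when the projection is below a threshold. Second, the objective is modified outside a ball, as in \citet{CarmonEtAl2020}, so that large queries carry no information. Then a concentration argument shows that each query uncovers at most one new hidden direction with high probability.

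For $B$, we reuse this chain with a stochastic oracle that reveals the next link's gradient component only with probability $p\asymp \epsilon^2/(\kappa\sigma^2)$, scaled by $1/p$ so it stays unbiased. The dual amplification makes the variance at most $\sigma^2$. Traversal then needs $\Omega(L\Delta/\epsilon^2\cdot 1/p)$ queries.

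For $C$, we use a two-point test. We build $f_\pm$ with disjoint sets of acceptable primal outputs whose gradient means differ by $O(\epsilon/\kappa)$ at every query. The coupling runs through a weakly coupled dual coordinate, so the primal gradient is shifted by $\epsilon$ while the $y$-gradient is shifted only by $\epsilon/\kappa$. With Gaussian noise, the KL divergence per query is $O(\epsilon^2/(\kappa^2\sigma^2))$. Le Cam's method then gives $\Omega(C)$ queries.

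For $D_G$, we use the dual-chain sign construction from the introduction. A hidden sign sits at the end of a dual chain of length $k\asymp\sqrt\kappa\log(G/(\epsilon\sqrt\kappa))$. A source of size $G$ has effect $G e^{-k/\sqrt\kappa}$ at the primal, which remains at least $\epsilon$. Each query advances discovery by one link, so the algorithm must make $\Omega(k)$ queries.

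The conditions $\kappa\ge\kappa_0$ and $L\Delta\ge C_0\epsilon^2$ give room in the constants for all four constructions.

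\textbf{Upper bound.} We run an inexact proximal-point outer loop in the style of \citet{LinJinJordan2020}. Each step solves the SCSC subproblem $f(x,y)+L\|x-x_k\|^2$ to coarse accuracy, which suffices for a $\Phi$ decrease of order $\epsilon^2/L$. This takes $O(L\Delta/\epsilon^2)$ outer steps.

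The inner solver is an accelerated SCSC method that is robust to conditional bias. It uses multilevel minibatches and recursive regularization, and it costs $\widetilde O(\sqrt\kappa\log(\text{init}/\text{acc})+\kappa\sigma^2/\epsilon^2)$ per step. Summed over the outer steps, this gives $A+B$. Warm starts keep the initialization logarithm from being paid at every step, so the $G$ term is paid only once, giving the $\sqrt\kappa\log(\cdots)$ term.

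We then pick an outer iterate with a small proximal step and refine its subproblem once to accuracy $\epsilon/\kappa$ in $y$. This makes $\|\nabla\Phi\|\le\epsilon$ and costs $\widetilde O(\kappa^2\sigma^2/\epsilon^2)=\widetilde O(C)$. Finally, Markov's inequality truncates the expected work into a fixed budget while keeping success probability at least $2/3$.

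\textbf{Main obstacle.} The hardest step is making the $B$ lower bound simultaneously unbiased, of variance at most $\sigma^2$, and leak-free for dense queries of unbounded norm, while keeping linear $\kappa$ dependence. To attempt it, I would first compose the probabilistic-zero-chain oracle of \citet{Arjevani2019} with the flat-threshold embedding, and check the variance bound via the $\kappa$-scaled dual coupling.
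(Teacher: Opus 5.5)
\textbf{Where the proposal matches.} Your architecture is the paper's. The lower bound uses four separate hard distributions combined through $\max_i a_i\ge\frac14\sum_i a_i$. The upper bound uses a one-sided proximal outer loop, a bias-robust multilevel inner SCSC solver whose statistical term is not multiplied by the cold-start logarithm, one final refinement, and a Markov truncation to a fixed budget.

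\textbf{The gap: the $B$ term.} This is exactly the term that improves $\kappa^{1/3}$ to $\kappa$. You propose to \emph{reuse} the accelerated chain with a probability-$p$ mask. In that chain, however, the gradient information released when the next vertex or stage becomes active is not small. The edge $t_{i,0}=y^{(i)}_1/\sqrt n-\sqrt C\,q_{x_i}$ carries a direct primal term. At an adversarial query, an exposed dual coordinate can also drive the next edge slope up to its cap $b$. After the physical scaling $\lambda/r=\Theta(\epsilon)$, the masked component $\nabla f_T(z)-\nabla f_{k(z)}(z)$ is therefore of order $\epsilon$ at some queries. Unbiasedness with variance at most $\sigma^2$ then forces $p\asymp\epsilon^2/\sigma^2$. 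The count gives only $\sqrt\kappa L\Delta\sigma^2/\epsilon^4$ with per-vertex masking, or $L\Delta\sigma^2/\epsilon^4$ with per-stage masking. Your sentence ``the dual amplification makes the variance at most $\sigma^2$'' is precisely the step that must be constructed, and your closing paragraph concedes it is not.

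\textbf{The missing idea: a square-link lifting.} Write the chain of \citet{Arjevani2019} exactly as $F_T=D_T+\sum_i(s_{i,+}^2+s_{i,-}^2)$, with bounded $C^2$ links depending on two adjacent coordinates. Realize each square through its own dual coordinate: $\max_w\{a_s s\,w-\mu w^2\}=\lambda s^2$, with $a_s=2\sqrt{\mu\lambda}=\Theta(\epsilon/\sqrt\kappa)$ and $\lambda=\Theta(\epsilon^2/L)$. An unreached link then enters the oracle only through two pieces:
\begin{itemize}
\item a dual component of size $O(a_s)$;
\item a primal term $a_s\nabla s\,\rho_{R_0}(\ip{v}{y})/r$, which is $O(a_s)$ whenever that hidden dual projection is at most $r$.
\end{itemize}
Define a static index $k(z)$ from the query and frames alone (primal coordinates above $1/4$, dual projections above $r$). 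Clip the dual projections, and use one common radial cap for all truncations. This gives $\norm{\nabla f_T(z)-\nabla f_{k(z)}(z)}\le K_Ra_s$ at \emph{every} query, including queries of unbounded norm. Only then is $\nabla f_{k(z)}+(\zeta/p)(\nabla f_T-\nabla f_{k(z)})$ a fixed unbiased kernel with $p\asymp\epsilon^2/(\kappa\sigma^2)$, so that $T/p\asymp B$.

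\textbf{Smaller corrections.}
\begin{itemize}
\item In the $C$ family, ``weakly coupled'' cannot mean a small cross derivative. If the means differ by $O(\epsilon/\kappa)$ everywhere, then $y^*_\pm(x)$ differ by $O(\epsilon/L)$. So $\nabla\Phi_\pm$ can differ by $\Theta(\epsilon)$ only if $\partial_{xy}f$ is of order $L$.
\item The $C$ perturbation must also have zero dual gradient at the origin, as the paper's clipped curvature term $s\mu\psi_R(y)$ does. Otherwise the bound fails at $G=0$.
\item Draw the refined stage $R$ uniformly at random. A data-dependent choice of a ``small-step'' iterate suffers selection bias under expectation-only guarantees.
\end{itemize}
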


The theorem is a minimax equivalence, not merely an algorithm analysis: the lower bound holds for the full class of \Cref{def:paper-alg}. Its four terms are obtained from separate admissible hard distributions. This is sufficient because the worst-case complexity is at least the maximum of the four lower bounds, and the maximum is within a universal factor of their sum.

\begin{corollary}[Noise-free endpoint]\label{cor:paper-deterministic}
Under the same nontrivial-regime conditions,
\[
    \Tloc(L,\mu,\Delta,0,G,\epsilon)
    =\widetilde\Theta\!\left(
       \frac{\sqrt\kappa L\Delta}{\epsilon^2}
       +\sqrt\kappa\log_+\frac{G}{\epsilon\sqrt\kappa}
    \right).
\]
Both lower-bound mechanisms use exact gradients, and the upper solver has a deterministic branch; the conclusion is not obtained by taking $\sigma\downarrow0$ in a noise-dependent construction.
\end{corollary}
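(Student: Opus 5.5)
The statement just before this point is Corollary~\ref{cor:paper-deterministic}. I plan to derive it from \Cref{thm:paper-main} by setting $\sigma=0$. The only extra work is to confirm that the hard instances behind the surviving lower-bound terms really use exact gradients.

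\textbf{Upper bound.} Take $\sigma=0$ in \eqref{eq:paper-main-upper}. Then $B=C=0$ and $\Lambda=1+\log(2+\kappa)+\log(2+L\Delta/\epsilon^2)+\log 2$, which is polylogarithmic in $\kappa$ and $L\Delta/\epsilon^2$. The remaining term is $\sqrt\kappa\log(2+\kappa^2G^2/\epsilon^2+\kappa^2L\Delta/\epsilon^2)$, which I bound by cases.
\begin{itemize}
\item If $G\le\epsilon\sqrt\kappa\,(L\Delta/\epsilon^2)^{1/2}$, the logarithm is $O(\log(2+\kappa)+\log(2+L\Delta/\epsilon^2))$. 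The term is then $\sqrt\kappa\,\Lambda$, and since $L\Delta\ge C_0\epsilon^2$ this is at most $A\,\Lambda$.
\item Otherwise, write $\kappa^2G^2/\epsilon^2=\kappa^3\cdot G^2/(\kappa\epsilon^2)$. The logarithm then splits as $O(\log\kappa)+2\log_+(G/(\epsilon\sqrt\kappa))+O(\log(L\Delta/\epsilon^2))$. The middle piece contributes $O(D_G)$ and the outer pieces are absorbed into $\Lambda\cdot A$.
\end{itemize}
In both cases the upper bound is $\widetilde O(A+D_G)$. The logarithmic factors do not depend on $G$, because $G$ enters only through the additive $D_G$.

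\textbf{Lower bound.} Since $\Tloc$ is a supremum over the class $\cF(L,\mu,\Delta,0,G)$, it suffices that the two hard distributions yielding $c\,A$ and $c\,D_G$ in \eqref{eq:paper-main-lower} are admissible at $\sigma=0$. This holds if their oracles return $\nabla f(z_t)$ exactly, with zero conditional variance.

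\begin{itemize}
\item \emph{The $A$ term.} I expect it to come from a deterministic chain instance: hidden coordinates, flat links below a threshold, and an outside-protected-region modification. Its randomness is only the hidden rotation and sign. The argument is then information-theoretic over the random embedding, with no oracle noise, so it transfers verbatim to $\sigma=0$.
\item \emph{The $D_G$ term.} The dual-chain instance with a far-endpoint source of size about $G$ is likewise described in the introduction as an exact-gradient mechanism. It gives $\Omega(\sqrt\kappa\log_+(G/(\epsilon\sqrt\kappa)))$ whenever the chain length needed for the decay $e^{-\Theta(k/\sqrt\kappa)}$ to shrink the source to scale $\epsilon$ exceeds the query horizon.
\end{itemize}
Taking the maximum of the two bounds, and using that a maximum is within a factor $2$ of the sum, gives $\Tloc\ge (c/2)(A+D_G)$.

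\textbf{Main obstacle.} The hardest step is confirming that neither construction secretly relies on noise. For example, a stochastic oracle might have been used to mask dense queries, or the stage-reveal probability might have been tuned using $\sigma$. If the $A$-instance inherited such masking, I would instead rerun the randomized zero-chain argument directly with exact gradients. The exact flatness of links below the threshold, together with the large-query modification, should already make each query reveal at most one new hidden coordinate with high probability, independently of $\sigma$.
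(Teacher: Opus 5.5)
Your proposal is correct and follows the paper's own route: the paper likewise applies the upper bound at $\sigma=0$, where the deterministic $K=0$ branch of the multilevel estimator is used. It splits the cold-start logarithm with $b=G/(\epsilon\sqrt\kappa)$, as you do, and takes the lower bound from the exact-oracle families of Theorem~\ref{thm:Anew} (which has $G_0=0$) and Corollary~\ref{cor:D}. Your case split is unnecessary, since $\log(2+\kappa^3b^2+\kappa^2L\Delta/\epsilon^2)\le\log(2+\kappa^3)+\log(2+b^2)+\log(2+\kappa^2L\Delta/\epsilon^2)$ holds uniformly. The contingency in your last paragraph is also not needed, because both constructions are already stated and proved with exact gradients.
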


\subsection{Infinite complexity without dual localization}

Let $\Tall(L,\mu,\Delta,\sigma,\epsilon)$ denote the same minimax complexity when the class places no restriction on $\|\nabla_y f(0,0)\|$.

\begin{theorem}[Infinite complexity without dual localization]\label{thm:paper-infinite}
For every $L\ge4\mu>0$, $\epsilon>0$, $\sigma\ge0$, and $\Delta\ge80\epsilon^2/L$,
\[
    \Tall(L,\mu,\Delta,\sigma,\epsilon)=+\infty.
\]
More precisely, for every finite horizon $N$, there is a fixed distribution over finite-dimensional, exact-oracle instances satisfying the same $(L,\mu,\Delta)$ bounds such that every unrestricted randomized algorithm using at most $N$ calls has success probability strictly below $2/3$ on average over that distribution.
\end{theorem}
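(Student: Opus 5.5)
The plan is to build, for every finite horizon $N$, a hard distribution over exact-oracle instances in which a hidden sign $s\in\{\pm1\}$ determines which primal outputs are stationary. The sign must be invisible to any algorithm making at most $N$ queries. It suffices to take $\sigma=0$: an exact oracle is admissible for every $\sigma\ge0$, so the same distribution works for all $\sigma$. I will then apply Yao's principle. Against the fixed prior, it is enough to bound the success probability of every deterministic algorithm, since a randomized algorithm is a mixture of deterministic ones.

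\textbf{The instance.} I would take a primal variable $x\in\R$ (or a short primal chain) coupled to a dual chain $y_1,\dots,y_K$. The dual part is the tridiagonal strongly concave quadratic
\[
-\tfrac{\mu}{2}\|y\|^2-\tfrac{L}{8}\sum_{i}(y_i-y_{i+1})^2 .
\]
The far endpoint carries a linear source $s\,M\,y_K$, and the near endpoint couples to $x$ through a term $\epsilon$-scaled in $x y_1$. The primal part is a fixed nonconvex function of gap at most $\Delta$, for example a smoothed bump, and the hypothesis $\Delta\ge 80\epsilon^2/L$ is what lets it fit.

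Solving the inner maximization is a linear recursion whose characteristic root is $\rho\approx 1-c/\sqrt\kappa$; this is where $L\ge4\mu$ is used. So the source propagates to $y_1^\star(x)$ with size about $M\rho^{K}$. I then choose $M=\rho^{-K}\cdot\Theta(\epsilon)$ so that $\nabla\Phi$ contains a term $\pm 2\epsilon$ depending on $s$. With this choice, the $\epsilon$-stationary points for $s=+1$ and for $s=-1$ lie in disjoint regions, and any output is wrong for at least one sign. Joint smoothness and strong concavity hold for every $M$, because the source is linear. Only $\|\nabla_y f(0,0)\|=M$ is huge, and that quantity is unrestricted in $\Tall$.

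\textbf{Hiding the sign.} As the introduction describes, I would hide the chain coordinates separately from the primal coordinates with independent random orthogonal embeddings $U$ (dual) and $V$ (primal) into dimension $d\gg N$; separate embeddings keep strong concavity in $y$ intact. A query reveals link $i+1$ only if its projection onto hidden direction $u_i$ exceeds a threshold. To enforce this, each coupling $(y_i-y_{i+1})^2$ is replaced by a smoothed version that is exactly flat below the threshold, in the style of Carmon et al. To preserve concavity and smoothness, I would make only the informative cross-links thresholded and keep a fixed $-\tfrac{\mu}{2}\|y\|^2$ and an isotropic concave part.

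The source $sMy_K$ is itself visible through $\nabla_y f$ at any query. To avoid revealing $s$, I would feed the source through the chain as well: the gradient term $sMu_K$ is multiplied by a flat-below-threshold gate in $\langle y,u_{K-1}\rangle$. Large or dense queries are handled by a radial cutoff. Outside a ball of radius $R$, the objective is replaced by a sign-independent function $-\tfrac{\mu}{2}\|y\|^2$ plus a fixed primal part, glued smoothly, with $R$ chosen large relative to the region where $y^\star$ lives. This makes gradients at large-norm queries independent of $s$ and of the hidden directions.

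\textbf{Counting.} With $K=N+2$, a standard union bound over queries shows that with probability at least $1-N e^{-cd/R^2}$ each query discovers at most one new chain direction. On that event the transcript is independent of $s$. The output is then independent of $s$ as well, so the success probability is at most $1/2+o(1)<2/3$ once $d$ is large enough.

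\textbf{Main obstacle.} The hard step is reconciling the exponentially large $M$ with the exact gating. The gated source and the radial cutoff must not break $L$-smoothness: gradients of size $M$ switched on by a gate create curvature of order $M/\text{width}$. To handle this, I would scale the gating width and the cutoff radius proportionally to $M/L$, so every transition has curvature at most $L$. I would then verify that the true maximizer $y^\star(x)$, which sits at distance about $M/\mu$, lies deep inside the region where all gates are open. Stationarity is defined through $y^\star$, not through query locations, so this is consistent with the queries never reaching it. Checking that these scalings stay compatible, with thresholds small relative to $R/\sqrt d$ and $y^\star$ within $R$, is where I expect most of the work.
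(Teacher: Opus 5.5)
The overall architecture matches the paper: a hidden sign making the two success sets disjoint, separate primal and dual frames, threshold-flat edges, radial protection, deferred decisions, and an exact oracle that serves every $\sigma$. The gap is the gated source, which you single out as the crux. It rests on a misdiagnosis, and it cannot be carried out.

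\textbf{The gate is unnecessary.} The visible vector $sMu_K$ leaks nothing. The pair $(u_1,su_K)$ has the same law for both signs, and $u_1$ is exposed at once by the $x$-coupling. The sign is a relative orientation between the two ends, and it is learned only when exploration fronts started from both public ends meet. Replacing $u_i$ by $su_i$ for every vertex beyond the midpoint gives an equal-in-law instance with an unsigned linear source and the sign on the central edge. That is the structure of the paper's construction: it hands $v_1,v_m$ to the algorithm for free, lets the auxiliary interaction expose one column at each end per call, and takes $m=4N+4$ so that the central edge is never reached.

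\textbf{The gate is infeasible.} It destroys the linearity behind your own remark that strong concavity holds for every $M$, and you checked only smoothness. Taken literally, $sM\chi(\langle u_{K-1},y\rangle)u_K$ is not a gradient field, so the objective must contain $G(y)=sM\langle u_K,y\rangle\chi(|\langle u_{K-1},y\rangle|)$.
\begin{itemize}
\item Let $H\succ0$ be the dual Hessian of your chain and $S$ its Schur complement onto $B=\{K-1,K\}$. Only vertex $K-2$ touches $B$, so $S$ has diagonal entries $a',a_K$, off-diagonal entry $-\beta$, and $(H^{-1})_{K-1,K}=\beta/\det S$.
\item At a point of maximal ramp slope, $\nabla^2_{BB}G$ is purely off-diagonal with entry $\pm M\chi'_{\max}$; both signs are attainable, and $\chi'_{\max}\ge1/w$ for ramp width $w$.
\item Mere concavity in $y$ therefore requires $a'a_K\ge(\beta+M/w)^2$.
\item Keeping the gate open at the maximizer requires $w\le|z^\star_{K-1}|\approx M(H^{-1})_{K-1,K}$, that is, $M/w\ge\det S/\beta$.
\item Since $\beta+\det S/\beta=a'a_K/\beta$, these combine to $a'a_K\le\beta^2$, i.e.\ $\det S\le0$, up to negligible corrections from the $x$-coupling and the flattening. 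This contradicts $H\succ0$.
\end{itemize}
The same algebra works for any positive definite base and any single gating coordinate. So no choice of widths or stiffnesses, scaled with $M/L$ or otherwise, repairs it.

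\textbf{The fix and further corrections.} Drop the gate and keep the source linear and public. The counting then changes: with two ends exposed, $K=N+2$ is far too short, and you need a chain of length of order $4N$ with the sign at the center, as in the paper. Three further details also need correcting.
\begin{itemize}
\item Gluing $f$ to $-\tfrac\mu2\norm{y}^2$ with a cutoff generally breaks concavity. The paper instead applies the convex smooth maximum $\mathcal M_{\beta R^2}(Q,W)$ to the edge energy alone. This keeps it convex and makes it exactly radial for $\norm y\ge2R$, and the linear source and coupling need no capping because their directions are public.
\item The hiding condition is $d\gtrsim(R/\delta)^2\log(mN)$. The threshold must be large, not small, relative to $R/\sqrt d$.
\item The primal part should be a strongly convex quadratic, with curvature and coupling of order $L$. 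A bump with an $\epsilon$-scaled coupling leaves the $\pm2\epsilon x$ term essentially unbalanced. Global strong convexity of $\Phi$ is what yields the gap $O(\epsilon^2/L)$ behind $\Delta\ge80\epsilon^2/L$. It also excludes outputs far from the window where the flattening error is controlled.
\end{itemize}
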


The theorem concerns a uniform dimension-free complexity function. It does not claim that an individual fixed finite-dimensional problem cannot be solved in finitely many queries.

\section{Upper Bound: Proof Architecture}\label{sec:upper-overview}

The upper bound has two layers. A one-sided proximal outer loop reduces NC--SC stationarity to a sequence of SCSC saddle problems. The inner solver handles those subproblems using only fresh one-point joint gradients, with an additive initialization logarithm and the optimal statistical term up to polylogarithms.

\subsection{One-sided proximal reduction}

For a center $c\in\R^{d_x}$, define
\begin{equation}\label{eq:main-prox-subproblem}
    g_c(u,v):=f(u,v)+L\|u-c\|^2,
    \qquad P_c(u):=\max_v g_c(u,v)=\Phi(u)+L\|u-c\|^2.
\end{equation}
Because $\Phi$ is $L$-weakly convex, $P_c$ is $L$-strongly convex. Let
\[
    u_c^*:=\argmin_u P_c(u),
    \qquad m(c):=2L(c-u_c^*).
\]
The optimality condition and strong convexity give
\begin{equation}\label{eq:main-moreau-identities}
    m(c)=\nabla\Phi(u_c^*),
    \qquad
    P_c(c)-P_c(u_c^*)\ge\frac{\|m(c)\|^2}{8L}.
\end{equation}
Thus an approximate saddle solution of $g_c$ yields descent in $\Phi$ whenever the proximal gradient $m(c)$ is large.

\begin{algorithm}[t]
\caption{High-level localized fresh-gradient method $\proxouter$}
\label{alg:outer}
\begin{algorithmic}[1]
\Require Initial point $(x_0,y_0)=(0,0)$; coarse accuracy $\eta_c$; fine accuracy $\eta_f$; number of outer stages $T$.
\For{$t=0,\ldots,T-1$}
    \State Approximately solve the SCSC problem $\min_u\max_v g_{x_t}(u,v)$ to expected saddle gap $\eta_c$ using the previous primal--dual output as a warm start.
    \State Let $(x_{t+1},y_{t+1})$ be the returned point.
\EndFor
\State Draw $R$ uniformly from $\{0,\ldots,T-1\}$.
\State Starting from $(x_{R+1},y_{R+1})$, refine the same subproblem $g_{x_R}$ to expected saddle gap $\eta_f$.
\State \Return the refined primal point $\widehat x$.
\end{algorithmic}
\end{algorithm}

Choose
\[
    T=\Theta\!\left(1+\frac{L\Delta}{\epsilon^2}\right),
    \qquad
    \eta_c=\Theta\!\left(\frac{\epsilon^2}{L}\right),
    \qquad
    \eta_f=\Theta\!\left(\frac{\epsilon^2}{\kappa L}\right).
\]
If each coarse subproblem is solved to gap $\eta_c$, telescoping \eqref{eq:main-moreau-identities} gives
\[
    \frac1T\sum_{t<T}\E\|m(x_t)\|^2=O(\epsilon^2),
    \qquad
    L\sum_{t<T}\E\|x_{t+1}-x_t\|^2=O(\Delta).
\]
The random fine solve converts a small proximal gradient into a small true primal gradient through the exact identity
\begin{equation}\label{eq:main-gradient-identity}
    \nabla\Phi(\widehat x)
    =m(x_R)+\nabla P_{x_R}(\widehat x)
      -2L(\widehat x-u_{x_R}^*).
\end{equation}

\subsection{A fresh-gradient SCSC primitive}

The main algorithmic ingredient is the following result, proved in full in \Cref{sec:uppernew}.

\begin{theorem}[Localized fresh-gradient SCSC solver; informal form]\label{thm:paper-inner}
Let $g(u,v)$ be jointly $\ell$-smooth, $\alpha$-strongly convex in $u$, and $\mu$-strongly concave in $v$, where $\alpha\ge\ell/4$ and $\kappa_g=\ell/\mu$. Suppose
\[
    \E\bigl[\ell\|u_0-u^*\|^2+\mu\|v_0-v^*\|^2\bigr]\le\mathcal E_0.
\]
Using only fresh one-point joint gradients, one can return $(u,v)$ with $\E\Gap_g(u,v)\le\eta$ at expected original-call cost
\[
    \widetilde O\!\left[
        \sqrt{\kappa_g}\log\!\left(2+\frac{\kappa_g\mathcal E_0}{\eta}\right)
        +\frac{\sigma^2}{\mu\eta}
    \right],
\]
where the hidden logarithms depend only on $\kappa_g$ and $\sigma^2/(\mu\eta)$, not on $\mathcal E_0$.
\end{theorem}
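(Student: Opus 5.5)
We will prove the solver bound of \Cref{thm:paper-inner} in two phases. First, a deterministic-style contraction reduces the initial scale $\mathcal E_0$ to order $\eta$, at a cost that is logarithmic in $\mathcal E_0$ and free of noise terms. Second, a single statistical phase reaches gap $\eta$ at cost $\sigma^2/(\mu\eta)$.

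\emph{Contraction phase.} The backbone is an accelerated primal--dual extragradient scheme for the SCSC problem $g$, with a recursive quadratic regularization (Catalyst-style) in $v$. Since $\alpha\ge\ell/4$, the primal side is already well conditioned, so the only ill-conditioning is the ratio $\ell/\mu$ in the dual. A proximal-point outer layer in $v$ with parameter $\approx\ell$ gives well-conditioned subproblems that are solvable in $O(\log)$ gradient steps. Acceleration of this layer yields a contraction factor $1-\Theta(1/\sqrt{\kappa_g})$ per $\widetilde O(1)$ calls on the Lyapunov potential
\[
    \mathcal E_k=\ell\|u_k-u^*\|^2+\mu\|v_k-v^*\|^2 .
\]
Starting from $\E\mathcal E_0$, reaching $\mathcal E_K\lesssim\eta/\kappa_g$ then takes $K\approx\sqrt{\kappa_g}\log(2+\kappa_g\mathcal E_0/\eta)$ rounds. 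The conversion from $\mathcal E_K$ to $\Gap_g$ uses smoothness and costs at most a factor of $\kappa_g$, which is the reason the target is $\eta/\kappa_g$.

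\emph{Gradient estimates and bias control.} The key device is that in round $k$ we estimate gradients only to mean-square accuracy proportional to the current error, namely
\[
    \E\|\widehat\nabla-\nabla g\|^2\le c\,\mu\,\mathcal E_k/\kappa_g .
\]
We obtain this accuracy with a minibatch of size $\sigma^2\kappa_g/(\mu\mathcal E_k)$. The inner proximal subproblems are solved inexactly by a finite multilevel (MLMC-type) estimator with truncated geometric levels. This keeps the bias of the approximate proximal maps below the same relative threshold, at expected cost $\widetilde O(1)$ times the batch size. The acceleration analysis then has to be made robust to \emph{conditional} bias that is relative to the current potential. We would prove a perturbed Lyapunov inequality
\[
    \E\mathcal E_{k+1}\le\bigl(1-\tfrac{1}{2\sqrt{\kappa_g}}\bigr)\E\mathcal E_k
\]
whenever the relative error is below $c/\sqrt{\kappa_g}$. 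This is the step we expect to be the main obstacle. Accelerated methods are known to accumulate error, and the errors here are random and correlated with the iterates. We would first try an estimate-sequence argument in which the error enters multiplied by the current distance. Cauchy--Schwarz with Young's inequality then absorbs it into the contraction.

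\emph{Summing the cost.} The sample cost is a geometric sum,
\[
    \sum_k\frac{\sigma^2\kappa_g}{\mu\,\mathcal E_k}\cdot\widetilde O(1).
\]
Because $\mathcal E_k$ decreases geometrically with ratio $1-\Theta(1/\sqrt{\kappa_g})$, the sum is dominated by its last $\sqrt{\kappa_g}$ terms. This gives $\widetilde O(\sigma^2\kappa_g^{3/2}/(\mu\eta))$, which is too large by polynomial factors in $\kappa_g$. We therefore run the accelerated contraction only until $\mathcal E\approx\sigma^2/\mu$ scale or $\eta$, whichever is larger. We then finish with a nonaccelerated stochastic phase in the final stretch. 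This phase is strongly-monotone SGDA/extragradient with step $\asymp\mu/\ell^2$ and averaging, or more precisely stochastic Catalyst on the dual with inner stochastic extragradient. Its variance-dominated cost is $O(\sigma^2/(\mu\eta))$ up to logarithms in $\kappa_g$ and $\sigma^2/(\mu\eta)$, by the standard $1/(\mu t)$ rate for strongly monotone stochastic VIs combined with the primal strong convexity $\alpha\ge\ell/4$.

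\emph{Bookkeeping and final check.} The logarithms arising from multilevel truncation and from restarts depend only on $\kappa_g$ and $\sigma^2/(\mu\eta)$. They do not depend on $\mathcal E_0$, because the number of contraction rounds is the only place $\mathcal E_0$ enters. Finally, we check that the returned point inherits $\E\Gap_g\le\eta$ from the last phase's expected Lyapunov bound, applying smoothness once.
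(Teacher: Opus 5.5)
Your deterministic skeleton is a plausible analogue of the paper's: you accelerate only in $v$, solve the well-conditioned $u$-part inexactly, and remove the resulting bias with a truncated multilevel estimator. The statistical part, however, has two genuine gaps.

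The first gap is the certificate. You drive $\mathcal E=\ell\norm{u-u^*}^2+\mu\norm{v-v^*}^2$ down to $\eta/\kappa_g$ and then convert to $\Gap_g$ by smoothness. The factor $\kappa_g$ lost in that conversion is harmless inside the deterministic logarithm, but it multiplies the statistical term, and no choice of final phase avoids this. Take $g(u,v)=\frac{\ell}{2}u^2-\frac{\mu}{2}v^2+\beta v$ with Gaussian noise of variance $\sigma^2$ on $\nabla_v g$. Guaranteeing $\mu|v-v^*|^2\le\eta/\kappa_g$ means estimating $\beta$ to squared accuracy $\mu\eta/\kappa_g$. That takes $\Omega(\kappa_g\sigma^2/(\mu\eta))$ calls for every algorithm. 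On this instance $\Gap_g=\frac{\ell}{2}u^2+\frac{\mu}{2}(v-v^*)^2$, so gap $\eta$ needs only $O(\sigma^2/(\mu\eta))$ calls.

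The missing idea is to certify a small \emph{gradient} of the dual function rather than a small distance. The paper works with $q(v)=-\min_u g(u,v)$ and uses recursive regularization (\Cref{lem:RRnew}). It performs $J=O(\log\kappa_g)$ value solves of $q_j=q_{j-1}+\frac{m_{j-1}}{2}\norm{\cdot-v_j}^2$, with $m_j=2^j\mu$ and accuracies $e_j\propto\mu\eta/(J^2m_{j-1})$. These give $\E\norm{\nabla q(v_J)}^2\lesssim\mu\eta$, while each stage has statistical cost only $\sigma^2/(m_{j-1}e_j)=O(J^2\sigma^2/(\mu\eta))$. A final fast-variable solve and the residual certificate $\Gap_g\le\norm{\nabla_u g}^2/(2\alpha)+\norm{\nabla_v g}^2/(2\mu)$ then yield the gap. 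Your Catalyst-style regularization serves acceleration and does not provide this conversion.

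The second gap is the batch accounting that pushed you to a nonaccelerated finish. Requiring mean-square gradient error $c\mu\mathcal E_k/\kappa_g$ in every round is far stronger than acceleration needs. In the robust recurrence \eqref{eq:AGDrobust}, with $a=\sqrt{m/H}$, the centered variance enters with weight $1/H$ and the bias with weight $2a/m$. Over an epoch of $\Theta(1/a)$ steps at envelope $E$, it therefore suffices that the batched variance be $O(\sqrt{mH}\,E)$ and the squared bias be $O(mE)$. The multilevel estimator handles the bias: its bias is relative to $\norm{\bar u-u^*(v)}$, and its level cutoff depends only on the final accuracy. A batch of size about $\sigma^2/(E\sqrt{mH})$ handles the variance. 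An epoch then costs $\widetilde O(\sqrt{H/m}+\sigma^2/(mE))$, and halving envelopes sum to $\widetilde O(\sqrt{H/m}\log(2+E_0/e)+\sigma^2/(me))$. Acceleration thus costs nothing statistically, and the $\kappa_g^{3/2}$ overhead you found is an artifact of your batch rule.

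Your fallback also fails as stated. SGDA with step $\mu/\ell^2$ contracts at rate $1-\Theta(\kappa_g^{-2})$ and extragradient at $1-\Theta(\kappa_g^{-1})$. Going from $\max\{\sigma^2/\mu,\eta\}$ to $\eta/\kappa_g$ therefore costs $\Omega(\kappa_g\log\kappa_g)$ calls even when $\sigma=0$. Moreover, the accelerated rounds near $\mathcal E\approx\sigma^2/\mu$ already use batches of size $\kappa_g$ for $\Theta(\sqrt{\kappa_g})$ rounds, i.e.\ $\kappa_g^{3/2}$ calls regardless of $\eta$. Neither cost fits in $\widetilde O(\sqrt{\kappa_g}\log(2+\kappa_g\mathcal E_0/\eta)+\sigma^2/(\mu\eta))$. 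Finally, the $1/(\mu t)$ rate you cite for strongly monotone stochastic VIs is a distance rate. Converting it to the unconstrained duality gap loses powers of $\kappa_g$.
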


The outer loop uses three features of this guarantee: it controls the joint saddle gap; it allows random subproblems and random warm starts controlled only in expectation; and its statistical term is not multiplied by the cold-start radius.

\subsection{Finite multilevel estimation with relative bias}

For fixed $v$, let
\[
    u^*(v)=\argmin_u g(u,v),
    \qquad q(v)=-\min_u g(u,v).
\]
Then $q$ is $\mu$-strongly convex and $O(\ell)$-smooth, but the original oracle does not directly return $\nabla q(v)=-\nabla_v g(u^*(v),v)$. The inner solver therefore builds an effective estimator by approximately solving the fast $u$-problem at geometrically increasing batch levels.

For a reference point $\bar u$, a target value accuracy $e$, and a regularized dual objective with strong-convexity parameter $m$, the estimator $Y(v)$ satisfies the relative bounds
\begin{align}
    \|\E Y(v)-\nabla Q(v)\|^2
    &\le \rho m e+\rho m^2\|\bar u-u^*(v)\|^2,\label{eq:main-relative-bias}\\
    \E\|Y(v)-\E Y(v)\|^2
    &\le \widetilde O(\sigma^2)+\rho m^2\|\bar u-u^*(v)\|^2.\label{eq:main-relative-var}
\end{align}
All levels use independent fresh samples. The randomized telescoping correction is finite, so the construction has an almost surely finite number of calls and does not require evaluating one sample at two points.

The key feature is \emph{relative} rather than absolute bias. The distance term in \eqref{eq:main-relative-bias} can be absorbed into a contracting accelerated potential. The level cutoff depends on the target accuracy and noise level, but not on the unknown initial radius; this is what prevents a factor $\log G$ from multiplying the stochastic cost.

\subsection{Robust acceleration and recursive regularization}

For an $H$-smooth, $m$-strongly convex objective $Q$, the accelerated recurrence used in the proof satisfies
\begin{equation}\label{eq:main-robust-agd}
    \E[\mathcal P_{t+1}\mid\mathcal F_t]
    \le \left(1-\frac12\sqrt{\frac mH}\right)\mathcal P_t
       +\frac{V_t}{H}
       +\frac{2}{\sqrt{mH}}\|b_t\|^2,
\end{equation}
where $b_t$ and $V_t$ are the conditional bias and centered variance of the effective gradient. Combining \eqref{eq:main-relative-bias}--\eqref{eq:main-relative-var} with epochwise batching halves the expected energy in each epoch. The deterministic work sums geometrically to $\widetilde O(\sqrt{H/m}\log(2+E_0/e))$, while the stochastic work sums as
\[
    \sum_s \frac{\sigma^2}{mE_s}=O\!\left(\frac{\sigma^2}{me}\right)
\]
because the epoch envelopes $E_s$ decrease geometrically.

A recursive regularization chain then converts value accuracy for a sequence of increasingly strongly convex dual objectives into a small gradient of the original $q$. A final fast-variable solve and the standard residual certificate
\[
    \Gap_g(u,v)
    \le \frac{\|\nabla_u g(u,v)\|^2}{2\alpha}
        +\frac{\|\nabla_v g(u,v)\|^2}{2\mu}
\]
produce the joint saddle-gap guarantee in \Cref{thm:paper-inner}.

\subsection{Complexity accounting and fixed-budget conversion}

The $T=\Theta(L\Delta/\epsilon^2)$ coarse solves contribute
\[
    \widetilde O(A+B).
\]
The single fine solve contributes
\[
    \widetilde O(\sqrt\kappa+C).
\]
The first subproblem has initial energy
\[
    O\!\left(\frac{G^2}{\mu}+\kappa\Delta\right),
\]
which generates the displayed cold-start logarithm in \eqref{eq:paper-main-upper}. Strong monotonicity of the saddle operator and the outer descent bound control every later warm start without additional dependence on $G$.

The analysis first yields a known upper bound $\mathcal B$ on the expected \emph{total number of original oracle calls}, including all random multilevel costs. The executable algorithm is stopped after $\lceil 12\mathcal B\rceil$ calls and outputs zero if the limit is exceeded. Markov's inequality bounds the truncation probability by $1/12$; the final squared-gradient guarantee gives another failure probability at most $1/12$. Hence the fixed-budget algorithm succeeds with probability at least $5/6$, establishing \eqref{eq:paper-main-upper}.
\section{Lower Bounds: Four Hard Mechanisms}\label{sec:lower-overview}

The lower bound is proved by four separate fixed distributions. Each construction is globally admissible, protects against dense and unbounded queries, and includes the final output in the hiding argument. This section explains the common lifting device and the mechanism behind each term; complete formulas and constants appear in \Cref{sec:D,sec:B,sec:C,sec:Anew}.

\subsection{A common lifting principle}

The intrinsic hard cores are finite primal--dual chains. Their primal coordinates and dual coordinates are embedded through independent random orthonormal frames
\[
    U\in\St(d_x,D_x),
    \qquad V\in\St(d_y,D_y).
\]
Separate frames are essential: a generic rotation mixing $x$ and $y$ need not preserve strong concavity in the designated dual variable.

A direct random-rotation proof faces two obstacles.
\begin{enumerate}
    \item An unrestricted algorithm may query a dense vector with a nonzero component in every hidden direction. Smooth chains generally leak infinitesimal information through these components.
    \item Spherical-cap bounds control the projection of a bounded query, but query norms are unrestricted.
\end{enumerate}
We address the first issue by replacing informative links with functions that are exactly flat below a threshold. We address the second by modifying the objective itself: outside a protected core, a convex radial term dominates and makes the informative part independent of the hidden frame. No oracle-side clipping is used.

The filtration argument uses an auxiliary interaction defined on every trajectory. At each step it exposes only a prefix of the hidden frames and computes a reply from those exposed columns, the algorithm seed, and independent auxiliary coins. Conditioned on this information, the unexposed frame completion is uniform on the appropriate orthogonal complement. The true and auxiliary transcripts coincide until a threshold projection is violated. We bound the probability of such a violation under the \emph{unstopped} auxiliary law and then couple the two interactions. We never condition on the absence of previous violations and subsequently claim an unconditioned Haar law.

\subsection{Dual localization and unlocalized impossibility}

The localization family is a two-ended dual path. A source at the far endpoint propagates through a strongly convex tridiagonal dual quadratic; a hidden sign at the central edge changes the correct primal stationary interval. Threshold-flat edge energies ensure that, before the chain fronts meet, the transcript is independent of the sign even for dense queries. A radial convex cap removes hidden-frame dependence for large dual queries.

For a path of length $m=\Theta(N)$, the endpoint Green function decays as
\[
    (H^{-1})_{1m}
    \asymp \frac{1}{\mu\sqrt\kappa}\exp\!\left(-\Theta\!\left(\frac{m}{\sqrt\kappa}\right)\right).
\]
To make the hidden sign shift the primal derivative by $\Theta(\epsilon)$, the initial dual source must therefore have size
\[
    G\asymp \epsilon\sqrt\kappa\exp\!\left(\Theta\!\left(\frac{m}{\sqrt\kappa}\right)\right).
\]
Solving for $m$ yields
\[
    N=\Omega\!\left(\sqrt\kappa\log_+\frac{G}{\epsilon\sqrt\kappa}\right).
\]
If $G$ is unrestricted, the source magnitude can be chosen for any proposed finite horizon, proving \Cref{thm:paper-infinite}. The oracle is exact, so this mechanism also covers the deterministic endpoint.

\subsection{Global stochastic progress}

The global stochastic family starts from a smooth nonconvex zero chain of length
\[
    T=\Theta\!\left(\frac{L\Delta}{\epsilon^2}\right).
\]
Each link is represented as a square and lifted through a strongly concave dual coordinate. The full objective has zero initial dual residual and a primal stationarity barrier until the last chain coordinate is reached.

For a query $z$, let $f_T$ be the full instance and let $f_{k(z)}$ be a truncated instance determined only by the current query and the fixed hidden frames. The construction proves a uniform global residual bound
\[
    \|\nabla f_T(z)-\nabla f_{k(z)}(z)\|
    \le K_R a_s,
    \qquad a_s^2=\Theta\!\left(\frac{\epsilon^2}{\kappa}\right).
\]
The oracle returns
\begin{equation}\label{eq:overview-stochastic-oracle}
    \mathsf G(z)
    =\nabla f_{k(z)}(z)
     +\frac{\zeta}{p}\bigl(\nabla f_T(z)-\nabla f_{k(z)}(z)\bigr),
    \qquad \zeta\sim\operatorname{Bernoulli}(p),
\end{equation}
where
\[
    p=\min\!\left\{1,\Theta\!\left(\frac{\epsilon^2}{\kappa\sigma^2}\right)\right\}.
\]
The coin is independent of the query and hidden frames. Consequently \eqref{eq:overview-stochastic-oracle} is a fixed Markov kernel satisfying exact conditional unbiasedness and the variance bound at every query. On the coupled good event, a successful coin reveals at most one new logical stage. Therefore fewer than $\Omega(T/p)$ calls fail with constant probability, giving
\[
    \Omega\!\left(
        \frac{L\Delta}{\epsilon^2}
        +\frac{\kappa L\Delta\sigma^2}{\epsilon^4}
    \right).
\]
The first summand is dominated by the accelerated deterministic term in the final theorem, while the second is $B$.

\subsection{Local true-gradient identification}

The $C$ term does not require a long chain. We construct two globally smooth one-primal/one-dual instances indexed by a sign $s\in\{-1,+1\}$. Their initial dual residuals are zero, and the sets of primal outputs satisfying $|\Phi_s'(x)|\le\epsilon$ are disjoint. Yet the two oracle means differ only in a clipped dual component, uniformly over every possible adaptive query:
\[
    \|\nabla f_+(x,y)-\nabla f_-(x,y)\|
    =O\!\left(\frac{\epsilon}{\kappa}\right).
\]
With Gaussian noise of variance $\sigma^2$ in that component, each query contributes at most
\[
    O\!\left(\frac{\epsilon^2}{\kappa^2\sigma^2}\right)
\]
conditional KL divergence. The adaptive chain rule for KL divergence and Pinsker's inequality imply
\[
    N=\Omega\!\left(\frac{\kappa^2\sigma^2}{\epsilon^2}\right).
\]
This family is already low-dimensional and the argument applies directly to arbitrary adaptive randomized algorithms.

\subsection{Accelerated deterministic progress}

The exact-oracle term $A$ requires a different construction from the stochastic term. A stochastic masking coin cannot help when $\sigma=0$, and a naive rotation of a bilinear dual chain leaks information through arbitrarily small off-chain projections.

Our intrinsic construction interleaves a nonlinear primal chain with dual subchains of length
\[
    n=\Theta(\sqrt\kappa).
\]
Each dual subchain uses a Robin-type path matrix whose endpoint inverse entries remain of order $n$. After eliminating the dual variables, one obtains a constant-strength coupling between consecutive primal stages. The edge potential
\[
    \varphi_{\delta,b}'(t)
    =\operatorname{sgn}(t)\min\{(|t|-\delta)_+,b\}
\]
is exactly flat for $|t|\le\delta$ and has globally bounded slope. Flatness gives exact transcript locality, while the bounded slope preserves the $\sqrt\kappa$ dependence under primal compression and dual protection.

The construction has $T=\Theta(L\Delta/\epsilon^2)$ primal stages, each requiring the traversal of $\Theta(n)$ hidden vertices. Under the auxiliary product-Haar interaction, one oracle call exposes at most one vertex. Appending the final output as a dummy query ensures that an unqueried output cannot guess the terminal coordinate. Hence
\[
    N=\Omega(nT)
     =\Omega\!\left(\frac{\sqrt\kappa L\Delta}{\epsilon^2}\right).
\]
The family has $G_0=0$ and uses exact gradients, so it belongs to every larger localization and variance budget.

\subsection{Synthesis}

The four hard distributions prove
\[
    \Tloc\ge\Omega(A),\qquad
    \Tloc\ge\Omega(B),\qquad
    \Tloc\ge\Omega(C),\qquad
    \Tloc\ge\Omega(D_G).
\]
Taking the maximum and using $\max_i a_i\ge(\sum_i a_i)/4$ yields the lower half of \Cref{thm:paper-main}. The constructions also establish all endpoint claims: the $A$ and $D_G$ families are exact-oracle, and the $A,B,C$ families satisfy $G_0=0$.
\section{Discussion}\label{sec:discussion}

The characterization separates four costs that a single accuracy exponent does not describe. Deterministic propagation and global stochastic progress scale with the initial primal gap. Local identification is paid once to resolve the true primal gradient, and dual localization contributes an additive logarithm. The unlocalized impossibility shows why an initialization parameter can be essential even when it appears only inside a logarithm.

The oracle model is central to this conclusion. Each response is a fresh unbiased joint-gradient estimate with bounded conditional variance; no regularity of individual sample gradients is assumed. Common-sample differences or finite-sum access can support different complexity guarantees. Likewise, a primal--dual gradient mapping or Moreau-envelope criterion should not be substituted for the true primal gradient without accounting for the conversion cost.

The matching statement holds in the nontrivial regime $\kappa\ge\kappa_0$ and $L\Delta\ge C_0\epsilon^2$. It is a dimension-free oracle characterization, with lower-bound dimensions allowed to grow with the query horizon. It does not give a sharp fixed-dimensional rate or an arithmetic-complexity bound. At $\Delta=0$, the initial primal point is already optimal, so no positive query lower bound is possible. The fixed-confidence result also leaves the optimal dependence on an arbitrary failure probability open.

Reducing the polylogarithmic overhead and the dimensions of the hard instances are natural next questions. Other directions include weaker noise moment assumptions and generalized smoothness. Each extension requires revisiting both the fresh-gradient solver and the admissibility of the hard oracle.

\bibliographystyle{unsrtnat}
\bibliography{references}

\clearpage
\appendix
\section{Elementary Devices}

\subsection{A concrete clip and a convex smooth maximum}
Let $h(s)=1-3s^2+2s^3$ for $s\in[0,1]$.  Define an odd $C^2$ function $\rho_R$
by $\rho_R(0)=0$ and the even derivative
\[
 \rho_R'(t)=\begin{cases}
 1,&|t|\le R,\\
 h(|t|/R-1),&R<|t|<2R,\\
 0,&|t|\ge2R.
 \end{cases}
\]
Then
\begin{equation}\label{eq:clip}
 \rho_R(t)=t\ (|t|\le R),\quad |\rho_R(t)|\le\min\{|t|,3R/2\},\quad
 |\rho_R'|\le1,\quad |\rho_R''|\le\frac{3}{2R}.
\end{equation}
For $\eta>0$ set
\[
 \chi_\eta(t)=\begin{cases}0,&t\le0,\\ t^2/(2\eta),&0<t<\eta,\\
 t-\eta/2,&t\ge\eta,\end{cases}
 \qquad \mathcal M_\eta(A,B)=A+\chi_\eta(B-A).
\]
The function $\mathcal M_\eta$ is convex and nondecreasing in each argument.  It
is $C^1$ with a Lipschitz gradient, equals $A$ if $B\le A$, and equals
$B-\eta/2$ if $B-A\ge\eta$.  In particular $\mathcal M_\eta(A,B)\ge A$.
When $A,B$ are differentiable, write $b=\chi_\eta'(B-A)\in[0,1]$.  Almost everywhere,
\begin{equation}\label{eq:maxhess}
 \nabla^2\mathcal M_\eta(A,B)
 =(1-b)\nabla^2A+b\nabla^2B+
 \chi_\eta''(B-A)\nabla(B-A)\nabla(B-A)^\top.
\end{equation}
All smoothness assertions made using~\eqref{eq:maxhess} concern globally Lipschitz
gradients; twice continuous differentiability is not required by the problem.
An almost-everywhere Hessian bound here implies the gradient Lipschitz bound by
integration along segments (or by smooth approximation and passage to the limit).

\subsection{Spherical concentration and the correct auxiliary filtration}
\begin{lemma}[A spherical tail bound]\label{lem:sphere}
If $u$ is uniform on the unit sphere of a $d$-dimensional Euclidean space, then
for every fixed $w$ and $a>0$,
\[
 \Pp(|\ip{u}{w}|>a)\le2\exp\!\left(-\frac{da^2}{2\norm{w}^2}\right).
\]
The right-hand side is interpreted as zero when $w=0$.
\end{lemma}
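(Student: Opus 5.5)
The plan is to reduce to the one-dimensional marginal $\ip{u}{w}/\norm{w}$. If $w=0$ both sides vanish, so assume $w\neq0$. By rotation invariance of the uniform law on the sphere, $\ip{u}{w}$ has the law of $\norm{w}\,u_1$, where $u_1$ is the first coordinate of a uniform unit vector in $\R^d$. Setting $s=a/\norm{w}$, it suffices to prove $\Pp(|u_1|>s)\le2e^{-ds^2/2}$ for every $s>0$. If $s\ge1$ the left side is zero and there is nothing to prove, so assume $0<s<1$.

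For the one-dimensional bound I would use the classical spherical-cap estimate. The set $\{u_1>s\}$ is a cap. Its normalized measure is at most $e^{-ds^2/2}$: the cap lies inside a ball of radius $\sqrt{1-s^2}$ centered at $s e_1$, and comparing the cone over the cap from the origin with this ball gives the bound $(1-s^2)^{d/2}\le e^{-ds^2/2}$. This is the standard Ball-type argument; I would spell out the cone/volume comparison carefully for $s$ in the relevant range. Symmetry then doubles the one-sided bound, which yields the factor $2$.

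An alternative, which avoids case splits, writes $u=g/\norm g$ with $g$ standard Gaussian. The law of $u_1^2$ is $\mathrm{Beta}(1/2,(d-1)/2)$, with tail
\[
\Pp(u_1^2>s^2)=c_d\int_{s^2}^1 t^{-1/2}(1-t)^{(d-3)/2}\,dt,
\]
which one bounds by $(1-s^2)^{(d-1)/2}$ times a constant. That route loses slightly in the exponent, giving $d-1$ instead of $d$, so I prefer the geometric cap argument, which gives the exact form stated.

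The main obstacle is getting the constant in the exponent to be exactly $d/2$ rather than $(d-1)/2$ or $d/4$. The cone-versus-ball comparison must be checked. For $s\ge1/\sqrt2$ one compares with the ball of radius $\sqrt{1-s^2}$ around $se_1$, which contains the cone over the cap. For $s<1/\sqrt2$ one uses a ball of radius $1/(2s)$ centered at $e_1/(2s)$, giving $(2s)^{-d}$. That second bound is weaker, so I would verify it is still at most $e^{-ds^2/2}$ in the regime where $2e^{-ds^2/2}<1$, and otherwise note that the claim is trivial when the right side is at least $1$. If this case analysis becomes awkward, I would fall back to citing the standard measure-concentration inequality on the sphere (Lévy/Ball) for the cap measure bound.
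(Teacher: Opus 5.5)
Your route is genuinely different from the paper's. The paper uses moments rather than cap geometry. It computes $\E u_1^{2j}=\frac{1\cdot3\cdots(2j-1)}{d(d+2)\cdots(d+2j-2)}\le(2j-1)!!/d^j$, so the moments are dominated by those of $N(0,1/d)$. Summing the exponential series gives $\E e^{tu_1}\le e^{t^2/(2d)}$, and Chernoff with $t=ds$ finishes. That yields the exact exponent $d/2$ with no case split and no separate treatment of $s\ge1$, and it gives a sub-Gaussian MGF bound rather than only a tail. Ball's cone-in-ball comparison is a legitimate, more geometric alternative that also gives exactly $d/2$. However, it works only with the two cases assigned correctly, and in your proposal they are reversed.

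As written, both halves of your case split fail. For $s>1/\sqrt2$, the ball of radius $\sqrt{1-s^2}$ about $se_1$ does not contain the origin: its distance from the center is $s>\sqrt{1-s^2}$. So it does not contain the cone. For $s<1/\sqrt2$, the ball of radius $1/(2s)$ about $e_1/(2s)$ does contain the cone. But for $s\le1/2$ the resulting bound $(2s)^{-d}\ge1$ is vacuous, and your escape clause does not cover this. For $s=1/4$ and large $d$, $2e^{-ds^2/2}=2e^{-d/32}$ is far below $1$. So the planned verification breaks in the small-threshold regime where the lemma is actually used, for example threshold $\delta/2$ against norm $2R$ in Theorem~\ref{thm:Dhorizon}.

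The fix is to swap the cases.
\begin{itemize}
\item For $0<s\le1/\sqrt2$: every cap point $\theta$ satisfies $\norm{\theta-se_1}^2=1+s^2-2s\theta_1\le1-s^2$, and the origin is within $s\le\sqrt{1-s^2}$ of $se_1$. By convexity the cone lies in that ball, so the cap has measure at most $(1-s^2)^{d/2}\le e^{-ds^2/2}$.
\item For $1/\sqrt2<s<1$: cap points satisfy $\norm{\theta-e_1/(2s)}^2=1-\theta_1/s+1/(4s^2)\le1/(4s^2)$, and the origin lies on the boundary. So the cone lies in the ball of radius $1/(2s)$ and the measure is at most $(2s)^{-d}$. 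This is at most $e^{-ds^2/2}$ because $\log(2s)-s^2/2$ is increasing on $(0,1]$ and equals $\log\sqrt2-1/4>0$ at $s=1/\sqrt2$.
\end{itemize}
With this correction, or by simply citing Ball's cap lemma as in your fallback, symmetry supplies the factor $2$ and the proof is complete.
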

\begin{proof}
By rotational invariance it suffices to consider the first coordinate of $u$.
Its odd moments vanish, and its even moments are
\[
 \E u_1^{2j}=\frac{1\cdot3\cdots(2j-1)}{d(d+2)\cdots(d+2j-2)}
 \le\frac{(2j-1)!!}{d^j}.
\]
The equality follows, for example, by writing a standard Gaussian vector as its
independent radius times its uniform direction and comparing even moments.
Expanding the exponential gives $\E e^{tu_1}\le e^{t^2/(2d)}$.
Chernoff's inequality, optimized over $t$, proves each of the two tails.
\end{proof}

\begin{lemma}[Deferred decisions for an auxiliary interaction]\label{lem:ghost}
Sample independent orthonormal frames $U,V$ uniformly before the run.  Consider
an auxiliary interaction which, at each step, exposes a prefix of each frame;
its replies and exposure decisions are functions only of the exposed columns,
the independent algorithm seed, and auxiliary coins independent of both frames.
Conditioned on this entire auxiliary information, the remaining columns are
independent uniform frames on the corresponding orthogonal complements.
Consequently Lemma~\ref{lem:sphere} applies to the next auxiliary query, with the
remaining ambient dimension in place of $d$.

If an actual interaction and this auxiliary interaction have identical replies
until a projection threshold is exceeded, the probability of a disagreement is
bounded by a union bound over threshold violations in the auxiliary interaction.
One must not condition on the absence of previous violations when invoking the
uniform-frame law.
\end{lemma}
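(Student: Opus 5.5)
The proof splits into two parts. The first part is the conditional law of the unexposed columns given the auxiliary transcript; the second part is the coupling that converts it into a union bound.

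\textbf{Conditional law.} Suppose, by induction on the step, that after step $t$ the auxiliary information $\mathcal A_t$ has been revealed. This information consists of the seed, the auxiliary coins, the exposed prefixes $U_{\le k_t}$ and $V_{\le j_t}$, and the replies. The claim is that, conditionally on $\mathcal A_t$, the remaining columns of $U$ form a uniform orthonormal frame on $\operatorname{span}(U_{\le k_t})^\perp$, the remaining columns of $V$ form a uniform frame on $\operatorname{span}(V_{\le j_t})^\perp$, and the two are independent.

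\emph{Base case.} Haar measure on the Stiefel manifold disintegrates: conditioned on the first $k$ columns, the rest are Haar on the complement. This holds because the law is invariant under orthogonal maps fixing those $k$ columns, and such maps act transitively on the completions. The seed and coins are independent of the frames.

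\emph{Inductive step.} Given $\mathcal A_t$, the exposure decisions $k_{t+1}$, $j_{t+1}$ and the next reply are measurable functions of $\mathcal A_t$, the fresh coins, and the newly exposed columns. The likelihood of the observed transcript therefore factors as a function of the exposed columns times a term free of the unexposed ones. Bayes' rule then leaves the completion law unchanged: Haar on the complement, restricted to consistency with the newly exposed columns. This is again the disintegration of Haar measure. Stopping-time measurability of $k_t$ is handled by conditioning on each value $k_t=k$ separately; countably many values suffice.

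\textbf{Projection bound.} Let $w$ be the component of the next auxiliary query lying in the complement. Since $w$ is $\mathcal A_t$-measurable, each unexposed column is marginally uniform on a sphere of dimension $D-k_t$. Lemma~\ref{lem:sphere} therefore applies conditionally with $d = D-k_t$, and then unconditionally by the tower property.

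\textbf{Coupling.} Construct both interactions on one probability space using the same frames, seed, and coins. Let $\tau$ be the first step at which the auxiliary query violates a threshold. By assumption the replies agree before $\tau$, so the transcripts agree and the queries coincide up to and including step $\tau$. Hence
\[
\Pp(\text{disagreement})\le\Pp(\tau\le N)\le\sum_{t\le N}\Pp(\text{auxiliary violation at }t).
\]
Each summand is computed under the \emph{unstopped} auxiliary law, where the first part of the proof applies without conditioning on earlier non-violations. This is why the warning in the statement holds. Conditioning on $\{\tau>t\}$ would bias the frame law, because that event depends on the unexposed columns through earlier non-exposed projections. The union bound never needs this conditioning.

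\textbf{Main obstacle.} The delicate point is making the inductive Bayes step rigorous. Exposure counts are random and depend on earlier replies, so the conditional law of the unexposed columns must be shown to remain Haar on the complement despite this dependence. I would settle it with an explicit invariance argument. For any orthogonal map $Q$ fixing $\operatorname{span}(U_{\le k})$, the map $U\mapsto QU$ leaves the joint law of (frames, seed, coins) invariant. Because the auxiliary transcript depends on the frames only through the exposed columns, the map also fixes the auxiliary transcript on the event that exactly $k$ columns have been exposed. The conditional law of the completion is therefore $Q$-invariant, hence Haar.
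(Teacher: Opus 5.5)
Your proposal is correct and follows essentially the same route as the paper. The auxiliary information is a function of the exposed prefixes and independent randomness, so invariance under orthogonal maps fixing those prefixes keeps the completions Haar on the complements; the two runs are then coupled on common frames, seed, and coins, with a union bound over auxiliary violations computed under the unstopped law. Your invariance argument on the events $\{k_t=k\}$ is simply a more explicit rendering of the paper's ``applying this observation successively,'' including the random exposure lengths.
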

\begin{proof}
Conditional on a specified prefix of a uniform orthonormal frame, its completion
is uniformly distributed on the orthogonal complement.  This follows from
invariance under every orthogonal transformation fixing that prefix.  The two
frames have a product law.  Any additional auxiliary information described in the
statement is a measurable function of their prefixes and independent randomness,
and therefore imposes no further restriction on either completion.  Applying
this observation successively proves the assertion, including a random exposure
length determined by the independent coins and exposed information.

For the coupling, use the same sampled frames, seed, and coins in both runs.
Before their first disagreement, their queries agree.  On a trajectory of the
auxiliary run without any listed violation, the assumed equality of the reply
rules gives equality at the first step and then at every subsequent step by
induction.  Thus disagreement is contained in the union of auxiliary violation
events.  Each such probability is estimated under the unstopped auxiliary law.
No conditional uniformity assertion is made under the survival event.
\end{proof}

\section{Exact-Oracle Localization and Unlocalized Impossibility}\label{sec:D}

\subsection{The horizon-specific instance}
Fix an integer $N\ge1$ and set
\[
 m=4N+4,\quad k=m/2,\quad a=L/4,\quad \gamma=L/8,\quad
 \beta=L/8192,\quad d_0=4\epsilon.
\]
Let $T_+$ be the $m\times m$ tridiagonal matrix with diagonal $2$ and off-diagonal
$-1$.  Put
\[
 D_s=\operatorname{diag}(\underbrace{1,\ldots,1}_{k},
                          \underbrace{s,\ldots,s}_{m-k}),\quad
 T_s=D_sT_+D_s,\quad H_s=\mu I+\beta T_s,\quad s\in\{-1,1\}.
\]
Define $\theta>0$ and $h_m>0$ by
\begin{equation}\label{eq:green}
 \cosh\theta=1+\frac{\mu}{2\beta},\qquad
 h_m=(H_+^{-1})_{1m}
      =\frac{\sinh\theta}{\beta\sinh((m+1)\theta)}.
\end{equation}
The inverse formula follows by solving the three-term recurrence
$-\beta z_{i-1}+(\mu+2\beta)z_i-\beta z_{i+1}=0$ with zero boundary values;
its two homogeneous solutions are $e^{i\theta}$ and $e^{-i\theta}$.
Set
\begin{equation}\label{eq:Dparams}
 M=\frac{2\epsilon}{\gamma h_m},\quad
 q=a+\gamma^2(H_+^{-1})_{11},\quad
 \delta=\frac{\epsilon\mu}{16\gamma\beta\sqrt{m+1}},\quad
 R=\frac{8(M+4\epsilon)}{\mu}.
\end{equation}
Let $V=(v_1,\ldots,v_m)$ be a uniform frame in $\R^{d_y}$, and put
$z_i=\ip{v_i}{y}$. With
\[
 \varphi_\delta(t)=\frac12\big((|t|-\delta)_+\big)^2,
\]
define
\begin{align*}
 Q_{s,V}(y)&=\beta\left[\varphi_\delta(z_1)
 +\sum_{i=1}^{m-1}\varphi_\delta(z_i-s_i z_{i+1})
 +\varphi_\delta(z_m)\right],\\
 s_i&=1\quad(i\ne k),\qquad s_k=s,\\
 W(y)&=4\beta(\norm y^2-R^2),\qquad
 \overline Q_{s,V}(y)=\mathcal M_{\beta R^2}(Q_{s,V}(y),W(y)).
\end{align*}
The actual objective is
\begin{equation}\label{eq:Dobj}
 f_{s,V}(x,y)=\frac a2x^2+d_0x+\gamma x\ip{v_1}{y}
       -\frac\mu2\norm y^2-\overline Q_{s,V}(y)+M\ip{v_m}{y},
 \qquad x\in\R.
\end{equation}
Its oracle returns its exact joint gradient.  The distribution consists of an
independent fair sign $s$ and the uniform frame $V$, both sampled before interaction.

\subsection{Global regularity and the primal decision problem}
\begin{lemma}\label{lem:Dregular}
For $L\ge4\mu$,~\eqref{eq:Dobj} is jointly $L$-smooth and $\mu$-strongly concave
in $y$.  It has initial dual residual $M$ and primal gap at most
$80\epsilon^2/L$.  For every $V$, its successful output set is contained in
\[
 J_+=\left[-\frac{7.5\epsilon}{q},-\frac{4.5\epsilon}{q}\right]
 \quad\hbox{or}\quad
 J_-=\left[-\frac{3.5\epsilon}{q},-\frac{0.5\epsilon}{q}\right],
\]
according to its sign.  These intervals are disjoint and independent of $V$.
\end{lemma}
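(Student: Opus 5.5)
I will treat the four assertions of Lemma~\ref{lem:Dregular} in order: smoothness/concavity, the initial residual, the primal gap, and localization of the successful outputs.

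\emph{Regularity.} I bound Hessians, using \eqref{eq:maxhess} for the capped dual term. The edge energies $\varphi_\delta$ are convex with $0\le\varphi_\delta''\le1$. Hence $0\preceq\nabla^2Q_{s,V}\preceq\beta\,\|D_s\widetilde T D_s\|\preceq 4\beta$, where $\widetilde T$ is the path matrix with both Dirichlet ends, so its eigenvalues are at most $4$. We also have $\nabla^2W=8\beta I$.

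In \eqref{eq:maxhess}, the rank-one term $\chi''(B-A)\nabla(B-A)\nabla(B-A)^\top$ needs care. It is supported where $0<W-Q<\beta R^2$, and there $\|y\|\ge R$ or close to it. Its size is $\|\nabla(W-Q)\|^2/(\beta R^2)$, with $\|\nabla W\|\le 8\beta\|y\|$ and $\|\nabla Q\|\lesssim\beta\|y\|$. I expect the main technical obstacle here: this term is not bounded uniformly in $y$. I will first check whether the set $\{0<W-Q<\beta R^2\}$ is contained in a bounded shell $R\lesssim\|y\|\lesssim 2R$, using $Q\le 2\beta\|y\|^2$ and $W=4\beta(\|y\|^2-R^2)$. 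On that shell the term is $O(\beta)$, so $\nabla^2\overline Q\preceq c\beta I$ with $c$ a modest constant. Since $\beta=L/8192$, this is far below $L/2$.

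Then $-\nabla^2_{yy}f\succeq\mu I$ because $\overline Q$ is convex, as $\mathcal M$ is convex nondecreasing and $Q,W$ are convex. For joint smoothness, the full Hessian has blocks $a=L/4$, cross term $\gamma=L/8$, and dual block of norm $\le\mu+c\beta\le L/4+L/100$. The operator norm is therefore at most $L/4+L/8+L/4+\ldots\le L$.

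\emph{Residual.} At $(0,0)$, every $z_i=0$ lies in the flat region and $W<Q$, so $\nabla\overline Q(0)=0$. Thus $\nabla_yf(0,0)=Mv_m$, giving residual $M$.

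\emph{Primal function.} Next I show $y^\star(x)$ stays in the region where all links are active and the cap is inactive, up to an explicit flat-threshold error. Without the flats and the cap, the inner problem is the quadratic $\max_y\,\gamma x z_1+Mz_m-\tfrac12 z^\top H_s z$, where $z=V^\top y$ and off-frame components vanish.

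The key identity is $H_s^{-1}=D_sH_+^{-1}D_s$. From it, the exact quadratic primal function is
\[
\Phi_s(x)=\tfrac a2x^2+d_0x+\tfrac12(\gamma x e_1+Me_m)^\top H_s^{-1}(\gamma xe_1+Me_m),
\]
and so
\[
\Phi_s'(x)=qx+d_0+s\gamma Mh_m=qx+4\epsilon+2s\epsilon,
\]
because $(H_s^{-1})_{1m}=s h_m$.

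To handle the flats, I compare with the true maximizer. The flattened dual objective differs from the quadratic one by a gradient perturbation of size at most $\beta\delta\cdot2\sqrt{m+1}$ in norm, coming from $m+1$ link terms each shifted by at most $\delta$. By strong concavity this moves $y^\star$ by at most $2\beta\delta\sqrt{m+1}/\mu=\epsilon/(8\gamma)$. Hence $\nabla\Phi$ moves by at most $\gamma\cdot\epsilon/(8\gamma)=\epsilon/8$, which is well below $\epsilon/2$.

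The choice $R=8(M+4\epsilon)/\mu$ ensures the cap stays inactive. Indeed, $\|y^\star\|\le\|\gamma x e_1+Me_m\|/\mu$ for relevant $x$. For large $|x|$, $\Phi$ is in any case dominated by $ax^2/2$, whether or not the cap is active, which I will verify crudely.

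\emph{Output sets and gap.} The condition $|\Phi_s'(x)|\le\epsilon$ with error $\le\epsilon/2$ forces $qx\in[-6\epsilon-1.5\epsilon,\,-6\epsilon+1.5\epsilon]$ for $s=+1$ and $qx\in[-3.5\epsilon,-0.5\epsilon]$ for $s=-1$. These are the sets $J_\pm$. They are disjoint and independent of $V$, since $q$ depends only on $H_+$.

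Since $q\ge a=L/4$, $\Phi$ is essentially a quadratic with curvature $\ge q-O(\cdot)$ and slope at $0$ of at most $6.5\epsilon$. The gap is then at most $(6.5\epsilon)^2/(2\cdot L/4)\cdot(1+o(1))\le 80\epsilon^2/L$. Rigorously, $\Phi$ is $(q-\epsilon\text{-level})$-strongly convex up to the bounded perturbation above, so the standard bound $\Phi(0)-\inf\Phi\le|\Phi'(0)|^2/(2\cdot\text{curvature})$ plus the perturbation applies.
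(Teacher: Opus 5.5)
There is a genuine gap. Your treatment of smoothness, strong concavity and the initial residual agrees with the paper's proof. So do the exact quadratic derivative $qx+(4+2s)\epsilon$ obtained from $H_s^{-1}=D_sH_+^{-1}D_s$ and the flattening bound $\epsilon/(8\gamma)$ on the dual optimizer.

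\textbf{Passing from the window to all of $\R$.} The comparison $|\Phi_{s,V}'(x)-[qx+(4+2s)\epsilon]|\le\epsilon/8$ is available only on a bounded window where the optimizer provably avoids the cap. The paper uses $x\in[-8\epsilon/q,0]$. There $\gamma|x|\le4\epsilon$ keeps the optimizer inside $\norm y<R/4$, and $\overline Q\ge Q$ with equality at that point shows capping leaves the maximum unchanged. Outside any such window you have no formula for $\Phi_{s,V}'$. So your sentence that $|\Phi_s'(x)|\le\epsilon$ ``forces'' $qx$ into an interval does not exclude distant outputs.

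The suggested crude domination by $ax^2/2$ does not fill this. The only a priori bound, $\norm{y^\star}\le(\gamma|x|+M)/\mu$, permits $|\gamma\ip{v_1}{y^\star}|$ up to $(\kappa L/64)|x|+\gamma M/\mu$. This exceeds $a|x|=L|x|/4$ once $\kappa>16$, so the term $ax$ cannot control the sign of $\Phi'$.

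The missing idea is the one the paper uses. For each fixed $y$, $f_{s,V}(\cdot,y)-\tfrac a2x^2$ is affine in $x$. Hence $\Phi_{s,V}$ is a pointwise maximum of $a$-strongly convex functions, so it is globally $a$-strongly convex and $\Phi_{s,V}'$ is increasing. It then suffices to check $\Phi'<-\epsilon$ at the left endpoint and $\Phi'>\epsilon$ at the right endpoint of $J_s$, both of which lie inside $[-8\epsilon/q,0]$.

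\textbf{The primal-gap estimate.} The same fact is what this estimate is missing, and as written it does not close. With $|\Phi'(0)|\le6.5\epsilon$ and modulus $L/4$ you get $(6.5\epsilon)^2/(L/2)=84.5\,\epsilon^2/L>80\,\epsilon^2/L$. Curvature ``near $q$'' does not follow from the window estimate alone, because the infimum ranges over all of $\R$. Instead, use the error $\epsilon/8$ you already derived, so $|\Phi'(0)|\le(6+\tfrac18)\epsilon$, together with global $a$-strong convexity:
\[
\Phi(0)-\inf\Phi\le\frac{|\Phi'(0)|^2}{2a}\le\frac{2(6.125)^2\epsilon^2}{L}<\frac{76\,\epsilon^2}{L}.
\]

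A minor point: your shell estimate actually gives a cap-Hessian bound of about $368\beta\approx0.045L$, not $L/100$. The joint bound $\max\{a,\mu+368\beta\}+\gamma<L$ still holds comfortably.
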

\begin{proof}
Let $A_s$ be the signed path incidence matrix, including the two endpoint rows,
so that $A_s^\top A_s=T_s$ and $\norm{A_s}\le2$.
Since $\varphi_\delta$ is convex and has a $1$-Lipschitz derivative,
\[
 0\le Q\le2\beta\norm y^2,\quad
 \norm{\nabla Q}\le4\beta\norm y,\quad 0\preceq\nabla^2Q\preceq4\beta I.
\]
The cap is convex.  It agrees with $Q$ on $\norm y\le R$, and with
$W-\beta R^2/2$ on $\norm y\ge2R$.  In the transition region
$0<W-Q<\beta R^2$, the inequality $Q\le2\beta\norm y^2$ gives
$\norm y^2<(5/2)R^2$.  In~\eqref{eq:maxhess}, the base Hessian has norm at most
$8\beta$, and the rank-one term has norm at most
\[
 \frac{(12\beta\norm y)^2}{\beta R^2}\le360\beta.
\]
Thus $\nabla\overline Q$ is $368\beta$-Lipschitz.  The full Hessian norm is bounded by
\[
 \max\{a,\mu+368\beta\}+\gamma<L
\]
when $L\ge4\mu$, while its dual block is at most $-\mu I$.  At the origin all
flat-edge derivatives vanish and the cap is inactive, giving $G_0=M$.

Compare first with the uncapped, unflattened quadratic using
$\beta y^TVT_sV^Ty/2$ in place of $\overline Q$.
Its dual maximizer lies in the frame and has coordinates
$H_s^{-1}(\gamma x e_1+M e_m)$. Its primal derivative is exactly
\[
 qx+(4+2s)\epsilon,
\]
because $H_s^{-1}=D_sH_+^{-1}D_s$ and $\gamma Mh_m=2\epsilon$.
Flattening perturbs the dual gradient by at most
\[
 \beta\norm{A_s^\top}\delta\sqrt{m+1}
 \le2\beta\delta\sqrt{m+1}.
\]
Strong monotonicity of the dual minimization problem therefore moves its optimizer
by at most $2\beta\delta\sqrt{m+1}/\mu=\epsilon/(8\gamma)$.
For $x\in[-8\epsilon/q,0]$, $q\ge L/4$ implies $\gamma|x|\le4\epsilon$.
The quadratic optimizer has norm at most $(M+4\epsilon)/\mu=R/8$.
The flattened optimizer has norm less than $R/4$, so the cap is inactive there.
Since the capped objective is no larger than the uncapped one and agrees at this
optimizer, their maxima agree.  Hence the actual primal derivative satisfies
\begin{equation}\label{eq:Dderror}
 |\Phi_{s,V}'(x)-[qx+(4+2s)\epsilon]|\le\epsilon/8
 \quad\hbox{for }x\in[-8\epsilon/q,0].
\end{equation}
The primal function is globally $a$-strongly convex, as a maximum of functions
with that common primal strong-convexity modulus. At each left endpoint of $J_s$
its derivative is less than $-\epsilon$ and at each right endpoint it is greater
than $\epsilon$, by~\eqref{eq:Dderror}. Monotonicity excludes successful outputs
outside $J_s$. Finally strong convexity gives
\[
 \Phi(0)-\inf\Phi\le\frac{|\Phi'(0)|^2}{2a}
 \le\frac{2(6+1/8)^2\epsilon^2}{L}<\frac{80\epsilon^2}{L}.
\]
\end{proof}

\subsection{The unrestricted distributional argument}
\begin{theorem}[Horizon-specific localization bound]\label{thm:Dhorizon}
Take the parameters in~\eqref{eq:Dparams} and choose
\begin{equation}\label{eq:Ddimension}
 d_x=1,\qquad d_y=m+3+
 \left\lceil\frac{32R^2}{\delta^2}\log\bigl(24m(N+1)\bigr)\right\rceil.
\end{equation}
If $G\ge M$ and $\Delta\ge80\epsilon^2/L$, every unrestricted randomized
algorithm with at most $N$ calls has average success probability at most $7/12$
on the fixed distribution~\eqref{eq:Dobj}.
\end{theorem}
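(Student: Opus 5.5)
The plan is to reduce the problem to a sign-independent auxiliary interaction and then use Lemma~\ref{lem:Dregular}. By Lemma~\ref{lem:Dregular}, every instance in the distribution lies in $\cF(L,\mu,\Delta,\sigma,G)$ because $G\ge M$ and $\Delta\ge80\epsilon^2/L$. A successful output must lie in $J_s$, and these two intervals are disjoint and do not depend on $V$. It is therefore enough to show that, outside an event of probability at most $1/6$, the law of the transcript does not depend on $s$. On that event the output is a function of a sign-free transcript and can land in $J_s$ for at most one value of $s$. This gives success probability at most $1/2+1/6\cdot\frac12$, and more crudely at most $\frac12+\frac1{12}=\frac7{12}$. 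We append the output $\widehat x$ as a dummy query; this is harmless because $d_x=1$ and $V$ acts only on $y$.

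\textbf{Locality structure.} The gradient at $(x,y)$ depends on $V$ only through the projections $z_i=\ip{v_i}{y}$, the flat-edge derivatives $\varphi_\delta'$, and the cap weight $b$.

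When $\norm y\ge 2R$, the cap gives $\overline Q=W-\beta R^2/2$. In that region the only frame-dependent terms are $\gamma x v_1$ and $Mv_m$, and neither involves $s$. I still have to handle the fact that $v_1$ and $v_m$ are revealed; I will expose both endpoint columns from the start.

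When $\norm y<2R$, the edges are flat. Edge $(i,i+1)$ contributes nothing whenever $|z_i-s_iz_{i+1}|\le\delta$, and the endpoint terms vanish when $|z_1|,|z_m|\le\delta$.

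The auxiliary interaction works as follows. It maintains two fronts, the exposed prefix $v_1,\dots,v_{j}$ and the exposed suffix $v_{m-j'+1},\dots,v_m$. At each query it computes the reply as if $z_i=0$ for every unexposed $i$, and then advances each front by one column. This is legitimate because only edges touching exposed vertices can be active. With one step per query per front and $m=4N+4$, after $N+1$ queries the fronts have covered at most about $2N+3<k$ columns from each side. The central edge $k$, which is the only place $s$ enters, is therefore never exposed.

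One point needs care. The suffix columns are $s v_i$ in the sense of $D_s$: the signs $s_i$ only matter at edge $k$, so the exposed columns on each side determine the replies without reference to $s$. Hence the auxiliary transcript law is independent of $s$.

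\textbf{Coupling.} The true reply equals the auxiliary reply unless some unexposed column $v_i$ with $|\ip{v_i}{y_t}|>\delta/2$ appears at a query with $\norm{y_t}<2R$. (Threshold $\delta/2$ on each of the two endpoints of an edge keeps that edge flat.) By Lemma~\ref{lem:ghost}, conditionally on the auxiliary information the unexposed columns are uniform on a complement of dimension at least $d_y-m-3$. Lemma~\ref{lem:sphere} with $\norm w\le2R$ then bounds each violation probability by
\[
2\exp\bigl(-(d_y-m-3)\delta^2/(32R^2)\bigr)\le \frac{1}{12m(N+1)}
\]
by \eqref{eq:Ddimension}. A union bound over at most $m$ columns and $N+1$ queries gives a total violation probability of at most $1/12$, evaluated under the unstopped auxiliary law as Lemma~\ref{lem:ghost} requires.

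\textbf{Main obstacle.} The hardest step is the transition shell $R\le\norm y<2R$, where $b\in(0,1)$ and $\nabla\overline Q$ mixes $\nabla Q$ and $\nabla W$ with weight $\chi''_\eta(W-Q)$. The value $Q$ itself, not only its gradient, must be computable from the exposed columns. Under the flatness conditions, $Q$ is a sum over exposed-active edges only, so the cap weight is also auxiliary-computable. I will verify this explicitly.

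I will also check the front-advance accounting, since the counting might need two exposures per query: one because a query may activate an edge adjacent to the front, and one so that the next front column is revealed. The margin in $m=4N+4$ is intended to absorb this.
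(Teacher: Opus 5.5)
Your architecture is the paper's: $v_1,v_m$ revealed for free, two fronts advancing from the path ends, and replies computed with hidden coordinates zeroed but with the full norm $\norm y$ in $W$ and the cap. The $\delta/2$ event is tested only when $\norm y\le 2R$, Lemmas~\ref{lem:ghost} and~\ref{lem:sphere} are applied under the unstopped auxiliary law, and the conclusion uses disjointness of $J_\pm$. Your final bound $\tfrac12+\tfrac12\Pp(\text{bad})$ is valid because the auxiliary violation event is independent of $s$, and it is slightly sharper than the paper's $\tfrac12+\tfrac1{12}$. The dummy output query is harmless but unnecessary.

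\textbf{The gap is the order of operations.} You compute the reply with every unexposed $z_i$ set to zero and \emph{then} advance the fronts, arguing that only edges touching exposed vertices can be active. But the edge joining the last exposed column $v_j$ to the first hidden column $v_{j+1}$ touches an exposed vertex, and the algorithm can activate it deliberately. For $y=t\,v_j$ with $\delta<t\le R$, the true $\nabla Q$ contains $\beta(t-\delta)(v_j-v_{j+1})$. Your auxiliary reply contains only $\beta(t-\delta)v_j$. No unexposed projection exceeds $\delta/2$ here, since $z_{j+1}=0$. So your claim that the replies agree unless some unexposed projection exceeds $\delta/2$ is false, and no choice of $d_y$ repairs it.

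\textbf{The fix, which is what the paper does:} expose one more column at each end \emph{before} answering each call. Compute the reply with only the still-hidden coordinates zeroed. Impose $|\ip{v_i}{y}|\le\delta/2$ for every column unexposed \emph{before} that query, including the one just revealed. Then:
\begin{itemize}
\item the edge from the old front to the new column is computed exactly;
\item the edge from the new column to the next hidden one is flat both in truth and in the auxiliary computation, since both projections are at most $\delta/2$;
\item all hidden--hidden edges are flat, in particular the central signed edge.
\end{itemize}
Hence $Q$, the cap weight, and the whole reply coincide, including in the shell $R<\norm y\le 2R$. Your closing remark about a second exposure is circling this point. Once the order is reversed, one exposure per end per call suffices.

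Two smaller corrections. Since $k=m/2=2N+2$, your inequality $2N+3<k$ is false. The right count is that each front covers at most $N+1$ columns after $N$ calls, far from the central edge $(k,k+1)$. Also, in \eqref{eq:Dobj} the sign enters only through the single term $\varphi_\delta(z_k-sz_{k+1})$. The $D_s$ conjugation appears only in the quadratic comparison of Lemma~\ref{lem:Dregular}, so no sign is attached to the suffix columns.
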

\begin{proof}
Give the algorithm $v_1,v_m$ for free.  Before answering call $t$, an auxiliary
interaction exposes one more column at each end of the path.  It computes its
reply after setting the still unexposed frame coordinates to zero in $Q$.
It retains the full, known norm $\norm y$ in the radial terms and cap.
The central edge is not exposed in $N$ calls, so all these auxiliary replies and
exposed columns are independent of $s$.

At a query with $\norm y>2R$, both the true and auxiliary caps are radial and
have the same gradient.  At a query with $\norm y\le2R$, impose the good event
\[
 |\ip{v_j}{y}|\le\delta/2
 \quad\hbox{for every column unexposed \emph{before} this query.}
\]
Every edge joining two such coordinates is flat, since its signed difference
has magnitude at most $\delta$.  An edge joining a previously exposed coordinate
to the next one is computed exactly after that next column is exposed.  Edges
between the newly exposed columns and still hidden columns remain flat. Endpoint
terms in still hidden coordinates are also flat. Thus $Q$, its gradient, the cap,
and the full reply agree exactly with their auxiliary versions on this event.

By Lemma~\ref{lem:ghost}, before each auxiliary query every unexposed column is
uniform on a complement of dimension at least $d_y-m$.  Lemma~\ref{lem:sphere}
with norm bound $2R$ and threshold $\delta/2$ bounds each bad projection by
\[
 2\exp\!\left(-\frac{(d_y-m)\delta^2}{32R^2}\right).
\]
A union bound over $mN$ possible projections and~\eqref{eq:Ddimension} bounds the
probability of any discrepancy by $1/12$.  These are probabilities under the
unstopped auxiliary interaction, not probabilities conditioned on survival.

The auxiliary output has a distribution independent of the fair sign, even after
including the algorithm's random seed. Since $J_+$ and $J_-$ are disjoint, its
average success is at most $1/2$. Coupling changes this by at most $1/12$.
There is no unqueried-output loophole: successful scalar outputs must lie in the
two disjoint intervals regardless of their norm, and the output is a measurable
function of the coupled transcript. This proves the claim.
\end{proof}

\begin{corollary}\label{cor:D}
For $\kappa\ge8192$ and $\Delta\ge80\epsilon^2/L$,
\[
 \Tloc\ge\frac{\sqrt\kappa}{1024}
               \logp\!\left(\frac{G}{\epsilon\sqrt\kappa}\right).
\]
Thus the localization term in Theorem~\ref{thm:paper-main} has the stated argument
of the logarithm. Theorem~\ref{thm:paper-infinite} also holds.
\end{corollary}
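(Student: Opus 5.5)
The plan is to instantiate Theorem~\ref{thm:Dhorizon} with the largest horizon $N$ for which the required source $M=2\epsilon/(\gamma h_m)$ stays at most $G$. Every algorithm with at most $N$ calls then succeeds with average probability at most $7/12<2/3$. Since some instance in the support must then have success below $2/3$, we get $\Tloc>N$. The instances are admissible: by Lemma~\ref{lem:Dregular}, for $L\ge4\mu$ they are $L$-smooth and $\mu$-strongly concave, with dual residual $M$ and primal gap below $80\epsilon^2/L\le\Delta$. So everything reduces to an upper estimate of $M$ as a function of $m=4N+4$.

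\textbf{Step 1: bound $M$ via the Green function.} From \eqref{eq:green},
\[
M=\frac{2\epsilon\,\beta\sinh((m+1)\theta)}{\gamma\sinh\theta}\le\frac{\epsilon\beta\, e^{(m+1)\theta}}{\gamma\sinh\theta}.
\]
With $\beta=L/8192$ and $\gamma=L/8$ this equals $\epsilon e^{(m+1)\theta}/(1024\sinh\theta)$. Next, $\cosh\theta=1+\mu/(2\beta)=1+4096/\kappa$. For $\kappa\ge8192$ this is at most $3/2$, so $\theta\le\sinh\theta\le C/\sqrt\kappa$; concretely $\theta\le\sqrt{2(\cosh\theta-1)}=\sqrt{8192/\kappa}\le 91/\sqrt\kappa$. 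Also $\sinh\theta\ge\theta\ge c/\sqrt\kappa$, since $\cosh\theta-1\le\theta^2$ for $\theta\le1$, which gives $\theta\ge 64/\sqrt\kappa$. Hence
\[
M\le \epsilon\sqrt\kappa\, e^{91(m+1)/\sqrt\kappa}/64.
\]

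\textbf{Step 2: choose $N$.} Let $u=G/(\epsilon\sqrt\kappa)$. If $\log_+u\le$ some threshold, say $u\le e^{\text{const}}$, the bound is vacuous or trivially true by $\Tloc\ge1$. Check that $\sqrt\kappa/1024\cdot\log_+u<1$ in that range, or treat it via $N=0$ with $M$ as above requiring $G\ge M$; if needed, absorb this into the constant. Otherwise pick the largest integer $N\ge0$ with $91(4N+5)\le\sqrt\kappa\log(64u)$. That choice guarantees $M\le G$. It yields $N\ge \sqrt\kappa\log u/364-2$, and this is at least $\sqrt\kappa\log_+u/1024$ once $\sqrt\kappa\log u$ exceeds a modest constant. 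The remaining small-$\sqrt\kappa\log u$ case is covered because $\Tloc\ge1$ always holds when $L\Delta\ge80\epsilon^2$. A zero-query algorithm fails on the $N=0$ instance, which applies at least when $G\ge M(m=4)$. Otherwise $\log_+u$ is small enough that the claimed bound is below $1$, using $\kappa\ge8192$ to compare constants.

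\textbf{Step 3: Theorem~\ref{thm:paper-infinite}.} Given finite $N$ and no constraint on $G$, take the horizon-$N$ distribution of Theorem~\ref{thm:Dhorizon} with $G:=M$. The condition $L\ge4\mu$ gives regularity by Lemma~\ref{lem:Dregular}, and $\Delta\ge80\epsilon^2/L$ gives admissibility. Success is at most $7/12<2/3$ on average for every $N$, so $\Tall=+\infty$.

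The main obstacle is the constant bookkeeping in Step 2. There we must reconcile the explicit $1/1024$ with the additive loss $-2$ and with the small-argument regime of $\log_+$. I would first try to handle it by showing that whenever $\sqrt\kappa\log_+u/1024\ge1$, the slack between $1/364$ and $1/1024$ absorbs the $-2$.
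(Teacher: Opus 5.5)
Your argument follows the paper's proof. You bound $M=2\epsilon/(\gamma h_m)$ through \eqref{eq:green} using $\sinh t\le e^t/2$ and $\theta,\sinh\theta\asymp\kappa^{-1/2}$, apply Theorem~\ref{thm:Dhorizon} at a horizon with $M_{4N+4}\le G$, and take $G:=M$ for the unlocalized statement.

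In the main case the only difference is how $N$ is chosen. The paper fixes $N=\lfloor\sqrt\kappa\log b/1024\rfloor$ and checks $M_{4N+4}\le G$ directly: both the prefactor $e^5/(1024\sqrt{8192})$ and the exponent $\sqrt{8192}/256$ are below one. This incurs no additive loss. Your maximal-$N$ version also works, provided you use $\Tloc\ge N+1$ and keep the term $\sqrt\kappa\log 64/364>1.03$ instead of discarding it. With $x=\sqrt\kappa\log u$, this gives $\Tloc>x/364-0.22\ge x/1024$ for all $x\ge125$. As you wrote it, $N\ge x/364-2$ beats $x/1024$ only for $x\gtrsim1130$, which leaves $x\in[1024,1130)$ uncovered even though the target exceeds one there.

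The small-argument case does not close as written. For $u>1$ the target $\sqrt\kappa\log u/1024$ is positive, so saying the claimed bound is below $1$ helps only if $\Tloc\ge1$ is already known. Your only justification of $\Tloc\ge1$ is the $m=4$ member of the localization family, which needs $G\ge M_4$. So the assertion that $\Tloc\ge1$ ``always holds'' is circular exactly in the case $G<M_4$. There are two easy repairs:
\begin{enumerate}
\item Use a zero-query-hard family with zero dual residual, as the paper does. The testing family of Section~\ref{sec:C} has $G_0=0\le G$, and its gap $151424\epsilon^2/(\kappa L)$ is below $80\epsilon^2/L$ for $\kappa\ge8192$.
\item Restore the factor $\beta/\gamma=1/1024$ that you dropped in Step~1. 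Then $M_m\le\epsilon\sqrt\kappa\,e^{91(m+1)/\sqrt\kappa}/65536$, so $M_4\le0.003\,\epsilon\sqrt\kappa$. Hence $G<M_4$ forces $u<1$, and the target is zero.
\end{enumerate}
At $N=0$ you do not need Theorem~\ref{thm:Dhorizon}, which is stated for $N\ge1$. A zero-query output is independent of the sign, and Lemma~\ref{lem:Dregular} supplies disjoint success intervals.
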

\begin{proof}
Only $b=G/(\epsilon\sqrt\kappa)>1$ needs consideration. The formula for
$\theta$ implies
\[
 \theta\le\frac{\sqrt{8192}}{\sqrt\kappa}\le1,\qquad
 \sinh\theta\ge\frac{\sqrt{8192}}{\sqrt\kappa},\qquad
 M_m=\frac{\epsilon}{512\sinh\theta}\sinh((m+1)\theta).
\]
Take $N=\lfloor\sqrt\kappa\log b/1024\rfloor$. Then
\[
 (4N+5)\theta\le\frac{\sqrt{8192}}{256}\log b+5,
\]
so, using $\sinh t\le e^t/2$,
\[
 \frac{M_{4N+4}}{\epsilon\sqrt\kappa}
 \le\frac{e^5}{1024\sqrt{8192}}\,
                    b^{\sqrt{8192}/256}\le b.
\]
Both the prefactor and the exponent in the last expression are less than one.
If $N\ge1$, Theorem~\ref{thm:Dhorizon} excludes $N$ calls; integer-valued
complexity is therefore greater than $\sqrt\kappa\log b/1024$.
If $N=0$, the zero-query testing family of Section~\ref{sec:C} gives the needed
one-call lower bound. Its gap is at most $151424\epsilon^2/(\kappa L)$,
which is less than $80\epsilon^2/L$ for $\kappa\ge8192$.

For the unlocalized assertion, fix any $N\ge1$, with any $L\ge4\mu$.
The number $M$ in~\eqref{eq:Dparams} is finite.  With no localization restriction
it is admissible, and~\eqref{eq:Ddimension} is a finite integer.  Theorem~\ref{thm:Dhorizon}
therefore excludes this arbitrary horizon. The exact oracle has variance zero
and is admissible for every variance budget, including $\sigma=0$.
\end{proof}

\section{A Global Stochastic Lower Bound with Zero Dual Residual}\label{sec:B}

\subsection{A bounded square-link core}
Define
\[
 \Psi(t)=\begin{cases}0,&t\le1/2,\\
 \exp\bigl(1-(2t-1)^{-2}\bigr),&t>1/2,\end{cases}
 \quad \Theta(t)=\int_{-\infty}^t e^{-s^2/2}\,ds,\quad C_\Theta=\sqrt{2\pi}.
\]
The scalar chain below is the standard threshold construction used in
nonconvex oracle lower bounds~\cite{Arjevani2019}. For $u\in\R^T$, let
\begin{align}
 F_T(u)&=-\Theta(u_1)+\sum_{i=2}^T
  [\Psi(-u_{i-1})\Theta(-u_i)-\Psi(u_{i-1})\Theta(u_i)],\label{eq:F}\\
 D_T(u)&=-\Theta(u_1)-C_\Theta\sum_{j=1}^{T-1}\Psi(u_j),\nonumber\\
 s_{i,+}(u)&=\sqrt{\Psi(-u_{i-1})\Theta(-u_i)},\quad
 s_{i,-}(u)=\sqrt{\Psi(u_{i-1})(C_\Theta-\Theta(u_i))}.
 \nonumber
\end{align}
The identity
\begin{equation}\label{eq:squares}
 F_T=D_T+\sum_{i=2}^T(s_{i,+}^2+s_{i,-}^2)
\end{equation}
is exact. Each link depends on two adjacent coordinates.

\begin{lemma}[Core bounds]\label{lem:corebounds}
The links are $C^2$ and obey the dimension-independent component bounds
\[
 |s_{i,\pm}|\le4=:S,\qquad
 |\partial_j s_{i,\pm}|\le20,\qquad
 |\partial_j\partial_k s_{i,\pm}|\le1000.
\]
In addition
\[
 |F_T|\le 9T,\quad \norm{\nabla F_T}\le10^6\sqrt T,\quad
 \norm{\nabla^2F_T}\le10^6,
\]
while $\norm{\nabla D_T}\le49\sqrt T$ and
$\norm{\nabla^2D_T}\le1537$.  If $|u_T|\le1/4$, there is a coordinate $j$ with
$|u_j|\le1$ and $\partial_jF_T(u)\le-c_0$, where $c_0=e^{-1/2}$.
\end{lemma}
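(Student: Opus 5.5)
The plan is to verify Lemma~\ref{lem:corebounds} by direct one-variable calculus on $\Psi$ and $\Theta$, and then to assemble the multivariate bounds from the two-coordinate structure of the links. The stationarity claim will come from the standard chain argument of \cite{Arjevani2019}.

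\textbf{Scalar facts.} I will use $0\le\Psi\le e$ and $0\le\Theta\le C_\Theta$. The derivatives of $\Psi$ satisfy $|\Psi'|\le\sqrt{54/e}\,e^{-1/2}\cdot$const and $|\Psi''|\le$ a universal constant. In addition, $0\le\Theta'\le1$ and $|\Theta''|\le e^{-1/2}$.

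\textbf{Smoothness of the square roots.} The delicate point is that the links are square roots of products. For $s_{i,+}=\sqrt{\Psi(-u_{i-1})}\sqrt{\Theta(-u_i)}$, I will handle each factor separately.
\begin{itemize}
\item $\sqrt{\Psi(t)}=\exp\bigl(\tfrac12-\tfrac12(2t-1)^{-2}\bigr)$ is again a flat-at-$1/2$ smooth function with explicit bounded first and second derivatives.
\item $\sqrt{\Theta(t)}$ is smooth because $\Theta>0$ everywhere. Its derivative $\Theta'/(2\sqrt\Theta)$ is bounded, since as $t\to-\infty$ the Mills-ratio asymptotics give $\Theta(t)\sim e^{-t^2/2}/|t|$. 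Hence $\Theta'/\sqrt\Theta\sim\sqrt{|t|}\,e^{-t^2/4}\to0$, and the second derivative is controlled similarly.
\item $\sqrt{C_\Theta-\Theta}$ is handled by symmetry, since $C_\Theta-\Theta(t)=\Theta(-t)$.
\end{itemize}
Products and the chain rule then give $|s|\le\sqrt{e\,C_\Theta}\le4$ and the stated derivative bounds $20$ and $1000$ after crude constant tracking. I expect this constant bookkeeping, especially a clean bound for $(\sqrt\Theta)''$ on the negative half-line, to be the main tedium. My first attempt will be the inequality $\Theta(t)\ge e^{-t^2/2}/(|t|+\sqrt{2\pi}/2)$ style lower bounds for $t<0$.

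\textbf{Identity~\eqref{eq:squares}.} Expanding the squares gives
\[
\Psi(-u_{i-1})\Theta(-u_i)+\Psi(u_{i-1})C_\Theta-\Psi(u_{i-1})\Theta(u_i).
\]
The middle term, summed over $i$, cancels the $-C_\Theta\sum_{j}\Psi(u_j)$ in $D_T$.

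\textbf{Multivariate bounds.} Each coordinate appears in at most two links plus possibly $\Theta(u_1)$. The function bound is $|F_T|\le C_\Theta+(T-1)\,e\,C_\Theta\le 9T$ (more precisely $e\sqrt{2\pi}<7$). The gradient has each component bounded by an absolute constant, so $\|\nabla F_T\|\le c\sqrt T$. The Hessian is banded, with tridiagonal nonzero pattern and bounded entries, so Gershgorin gives a dimension-free $\|\nabla^2F_T\|$. The same reasoning applied to $D_T$, which is separable, gives the $49\sqrt T$ and $1537$ bounds.

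\textbf{Stationarity barrier.} Suppose $|u_T|\le1/4$. Let $j$ be the smallest index with $|u_j|<1$; such a $j$ exists because $j\le T$. Since $|u_{j-1}|\ge1$ or $j=1$, the incoming link contributes to $\partial_jF_T$ either $-\Theta'(u_1)$ or $-\Psi(\pm u_{j-1})\Theta'(\pm u_j)$, with $\Psi(|u_{j-1}|)\ge\Psi(1)=1$. This gives a contribution of at most $-e^{-1/2}$. The outgoing link term with $\Psi'(\pm u_j)$ vanishes because $|u_j|<1$ gives $\Psi'\ge0$ with the correct sign, and it is zero for $|u_j|\le1/2$.

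For $1/2<|u_j|<1$ I need the sign analysis of \cite{Arjevani2019}. Writing out the outgoing term, $-\Psi'(u_j)\Theta(u_{j+1})$ is nonpositive when $u_j>0$, and $+\Psi'(-u_j)$ times $\Theta(-u_{j+1})$ enters with a negative sign when $u_j<0$. So after choosing signs the partial derivative stays $\le -c_0$, possibly after passing to $-F_T$'s symmetric argument. This is where I would re-derive the original proof carefully.
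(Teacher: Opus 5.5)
Your proposal is correct and follows essentially the paper's route: scalar bounds for $\sqrt\Psi$ and $\sqrt\Theta$ (with a Mills-ratio lower bound on the negative half-line and the reflection $C_\Theta-\Theta(t)=\Theta(-t)$), the product rule for the links, and the first-index chain argument. The only difference is that you bound $\nabla F_T$ and $\nabla^2F_T$ directly from the tridiagonal two-coordinate structure rather than through identity~\eqref{eq:squares} and the link Jacobian, which is equally valid and gives much smaller constants. In the last step no sign case analysis or passage to $-F_T$ is needed: each of the four terms in $\partial_jF_T$ is nonpositive for every $u$, because $\Psi,\Theta,\Psi',\Theta'\ge0$ and the reflected terms pick up a minus sign from the chain rule. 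So the outgoing term, which does not vanish when $1/2<|u_j|<1$, can simply be discarded.
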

\begin{proof}
Writing $p=\sqrt\Psi$, direct differentiation with $v=(2t-1)^{-1}$ gives
\[
 p'=2\sqrt e\,v^3e^{-v^2/2},\quad
 p''=4\sqrt e\,(v^6-3v^4)e^{-v^2/2}
 \quad(t>1/2).
\]
These derivatives tend to zero at the threshold, and satisfy
$|p|<2$, $|p'|<4$, $|p''|<120$ by maximizing powers times a Gaussian.
For $b(t)=\sqrt{\Theta(t)}$, $|b|<2$, $|b'|\le10$, and $|b''|\le100$ suffice.
For completeness, when $t=-r\le-1$, integration by parts gives
$\Theta(-r)\ge r e^{-r^2/2}/(r^2+1)\ge e^{-r^2/2}/(2r)$.
The formulas
$b'=\Theta'/(2\sqrt\Theta)$ and
$b''=\Theta''/(2\sqrt\Theta)-(\Theta')^2/(4\Theta^{3/2})$
then bound the derivatives by constants times $r^{3/2}e^{-r^2/4}$.
On $[-1,0]$ use $\Theta\ge e^{-1/2}/2$; on $[0,\infty)$ use $\Theta\ge1$.
The reflected tail $\sqrt{C_\Theta-\Theta(t)}=b(-t)$ obeys the same bounds.
The product rule proves the displayed link bounds.

Each row of the link Jacobian has at most two nonzero entries of magnitude $20$,
and each column has at most four. Its operator norm is at most $\sqrt8\,20$.
Moreover $\norm{\sum_\ell w_\ell\nabla^2s_\ell}\le8000\norm w_\infty$.
Using $\Psi=p^2$, the bounds $|\Psi'|\le16$ and $|\Psi''|\le512$ give the
bounds on $D_T$. Identity~\eqref{eq:squares} then gives the stated, deliberately
loose, gradient and Hessian bounds for $F_T$. Its value bound follows directly
from $0\le\Psi<3$, $0\le\Theta<3$, and the fact that $\Psi(t)$ and $\Psi(-t)$
cannot both be positive.

All partial derivatives of $F_T$ are nonpositive. Choose the first $j$ with
$|u_j|<1$, interpreting $u_0=1$. The incoming term contributes at most $-c_0$:
for $j=1$ this is $-\Theta'(u_1)$; otherwise one of
$\Psi(\pm u_{j-1})$ is at least $\Psi(1)=1$.  The outgoing term is nonpositive.
Such a $j$ exists because $|u_T|\le1/4$.
\end{proof}

Let $V$ have $2(T-1)$ orthonormal columns $v_{i,\pm}$.  Choose positive $\lambda,r$
later, and put
\[
 a_s=2\sqrt{\mu\lambda},\quad R_0=8S\sqrt{\lambda/\mu},\quad
 H_0=48S^2T\lambda=768T\lambda,\quad R_y=2\sqrt{H_0/\mu}.
\]
For any subset of consecutive links $2,\ldots,j$, define
\begin{align*}
 \Gamma_j(u,y)&=a_s\sum_{i=2}^j\sum_{\nu\in\{+,-\}}
                    s_{i,\nu}(u)\rho_{R_0}(\ip{v_{i,\nu}}{y}),\\
 Q_j(u,y)&=\frac\mu2\norm y^2-\Gamma_j(u,y),\qquad
 W(y)=2\mu\norm y^2-4H_0,\\
 K_j(u,y)&=\mathcal M_{H_0}(Q_j(u,y),W(y)).
\end{align*}
For $j=T$ write $\Gamma,Q,K$ without the subscript. Crucially, the full ambient
norm $\norm y$, not a hidden-frame projection norm, is used in every radial term.
The common $H_0,W$ are used for all truncations.

\begin{lemma}[Dual protection and exact maximization]\label{lem:Bcap}
The function
\[
 f_{\rm core}(u,y)=\lambda D_T(u)-\frac\mu2\norm y^2-K_T(u,y)
\]
is $\mu$-strongly concave in $y$, and
\begin{equation}\label{eq:Bvalue}
 \max_y f_{\rm core}(u,y)=\lambda F_T(u).
\end{equation}
For $\norm y\ge R_y$, every $K_j$ equals $W-H_0/2$ and is independent of all
link frames and $u$. Uniformly in $j,T,u,y$,
\begin{align}
 \norm{\nabla_u f_{\rm core}}&\le8000\lambda\sqrt T,&
 \norm{\nabla^2_{uu}f_{\rm core}}&\le10^6\lambda,\nonumber\\
 \norm{\nabla^2_{uy}f_{\rm core}}&\le2000\sqrt{\mu\lambda},&
 \norm{\nabla^2_{yy}f_{\rm core}}&\le50\mu.\label{eq:Bblock}
\end{align}
The same bounds hold for truncated seed and link sums defined below.
\end{lemma}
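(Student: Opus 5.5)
The plan has three parts: strong concavity and the cap identity, the exact maximization \eqref{eq:Bvalue}, and the block bounds \eqref{eq:Bblock}.

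\textbf{Concavity and the cap.} I would first show that $y\mapsto K_j(u,y)$ is convex for each fixed $u$. The function $W$ is convex. The function $Q_j$ is convex in $y$ because $\Gamma_j$ is a sum of terms $a_s s_{i,\nu}(u)\rho_{R_0}(\ip{v_{i,\nu}}{y})$. Each such term has Hessian of norm at most $a_s S\cdot 3/(2R_0)=3\mu/16$, and these Hessians act on orthogonal directions, so $\nabla^2_{yy}Q_j\succeq(\mu-3\mu/16)I\succ0$. Since $\mathcal M_{H_0}$ is convex and nondecreasing in each argument, $K_j$ is convex in $y$. It follows that $f_{\rm core}$ is $\mu$-strongly concave in $y$. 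For the radial regime, I use $|\rho_{R_0}|\le 3R_0/2$, which gives $|\Gamma_j|\le a_s S\cdot 2T\cdot 3R_0/2=24TS^2 a_s\sqrt{\lambda/\mu}\cdot\tfrac12$, which is of order $H_0$. When $\norm y\ge R_y$ we have $W\ge 4H_0$ and $Q_j\le \mu\norm y^2/2+H_0$, so $W-Q_j\ge H_0$ once the constants are checked. Then $\mathcal M_{H_0}=W-H_0/2$, which depends on neither the frames nor $u$.

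\textbf{Exact maximization.} For \eqref{eq:Bvalue}, I first drop the cap and maximize
\[
\lambda D_T-\mu\norm y^2+\Gamma_T .
\]
The natural candidate is $y^\star=\sum a_s s_{i,\nu}v_{i,\nu}/(2\mu)$. Its coordinates are bounded by $a_sS/(2\mu)=S\sqrt{\lambda/\mu}=R_0/8\le R_0$, so every clip is linear there. Since the clips satisfy $|\rho_{R_0}(t)|\le|t|$, the clipped objective is pointwise at most the unclipped quadratic, and the two agree at $y^\star$. Hence the uncapped maximum equals
\[
\lambda D_T+\sum a_s^2s_{i,\nu}^2/(4\mu)=\lambda D_T+\lambda\sum s_{i,\nu}^2=\lambda F_T
\]
by \eqref{eq:squares}. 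Next I reinstate the cap. Because $\mathcal M\ge Q$, capping can only lower the objective. At $y^\star$ we have $\norm{y^\star}^2\le 2T S^2\lambda/\mu$, so $W(y^\star)\le Q(y^\star)$ and the cap is inactive. Therefore the capped maximum is also attained at $y^\star$ and equals $\lambda F_T$.

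\textbf{Block bounds.} I bound the blocks of \eqref{eq:Bblock} using \eqref{eq:maxhess} together with Lemma~\ref{lem:corebounds}:
\begin{itemize}
\item \emph{$uu$ block.} This combines $\lambda\nabla^2D_T$ with the weighted link Hessians, whose weights $a_s\rho\le a_s\cdot3R_0/2$ are of order $\lambda$. This gives a bound of order $\lambda$.
\item \emph{$uy$ block.} This is at most $a_s$ times the link Jacobian norm, which is of order $\sqrt{\mu\lambda}$.
\item \emph{$yy$ block.} This is of order $\mu$.
\item \emph{$\nabla_u$.} The weights are bounded, so this is of order $\lambda\sqrt T$.
\end{itemize}
The rank-one term in \eqref{eq:maxhess} is $\nabla(W-Q)\nabla(W-Q)^\top/H_0$, taken over the transition region where $\norm y\lesssim\sqrt{H_0/\mu}$. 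There the $y$-gradients are of order $\mu\sqrt{H_0/\mu}$ and the $u$-gradients are of order $\lambda\sqrt T$. The rank-one blocks are therefore of order $\mu$, $\sqrt{\mu\lambda T}\cdot\sqrt{\mu/H_0}\sim\sqrt{\mu\lambda}$, and $\lambda^2T/H_0\sim\lambda$, respectively.

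The main obstacle is this last step: the $u$-gradient of $Q$ scales like $\sqrt T$, and only the choice $H_0\propto T\lambda$ cancels that factor. All of this is constant bookkeeping. Every argument above applies verbatim to the truncated sums, which settles the final assertion of the lemma.
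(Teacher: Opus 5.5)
Your outline matches the paper's proof almost step for step. The shared steps are convexity of $Q_j$ from the clip's curvature bound, and the explicit maximizer with coordinates $\sqrt{\lambda/\mu}\,s_{i,\nu}(u)$ in the clip's linear region, where the cap is inactive. The radial regime comes from $|\Gamma_j|\le H_0$, and the block bounds from \eqref{eq:maxhess}, where $H_0\propto T\lambda$ cancels the $\sqrt T$ in the $u$-gradient.

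One step, however, is false as written. You assert that $|\rho_{R_0}(t)|\le|t|$ makes $\lambda D_T-\mu\norm{y}^2+\Gamma_T$ pointwise no larger than the unclipped quadratic. But the weights $s_{i,\nu}(u)$ are nonnegative, and $\rho_{R_0}$ is odd with $\rho_{R_0}(t)>t$ for $t<-R_0$. So whenever $s_{i,\nu}(u)>0$ and $\ip{v_{i,\nu}}{y}<-R_0$, the clipped term exceeds the linear one. The repair uses only what you have already proved, and it is the paper's argument. Since $\nabla^2_{yy}\Gamma_T\preceq\tfrac{3\mu}{8}I$, the uncapped objective is strongly concave in $y$. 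At $y^\star$ every clip has $\rho_{R_0}'=1$, so the $y$-gradient $-2\mu y^\star+a_s\sum s_{i,\nu}v_{i,\nu}$ vanishes. Hence $y^\star$ is the unique global maximizer. (Alternatively, dominate by $-\mu\norm{y}^2+a_s\sum s_{i,\nu}|\ip{v_{i,\nu}}{y}|$, whose supremum equals the unclipped maximum.) Your cap argument then goes through unchanged.

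Some arithmetic also needs correcting before the stated constants follow.
\begin{enumerate}
\item The per-link curvature is $a_sS\cdot3/(2R_0)=3\mu/8$, not $3\mu/16$; this one is harmless.
\item The bound is $|\Gamma_j|\le2(T-1)\,a_sS\cdot\tfrac{3R_0}{2}\le48S^2T\lambda=H_0$, with no factor $\tfrac12$. Exactly this bound gives $W-Q_j=\tfrac32\mu\norm{y}^2-4H_0+\Gamma_j\ge H_0$ on $\norm{y}\ge R_y$; your two separate facts $W\ge4H_0$ and $Q_j\le\mu\norm{y}^2/2+H_0$ do not imply it by themselves.
\item Your mixed rank-one product $\sqrt{\mu\lambda T}\sqrt{\mu/H_0}$ is of order $\mu$. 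The correct product is $\norm{(\Gamma_j)_u}\norm{\nabla_y(W-Q_j)}/H_0\lesssim\lambda\sqrt T\cdot\sqrt{\mu\lambda T}/(T\lambda)=\sqrt{\mu\lambda}$.
\item The specific constants in \eqref{eq:Bblock} are more than ``bookkeeping'' left undone. They need the explicit bounds $\norm{(\Gamma_j)_u}\le7680\lambda\sqrt T$, $\norm{(\Gamma_j)_{uu}}\le768000\lambda$, $\norm{(\Gamma_j)_{uy}}\le114\sqrt{\mu\lambda}$, and $\norm{\nabla_y(W-Q_j)}<179\sqrt{\mu\lambda T}$ on the transition strip. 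Together with Lemma~\ref{lem:corebounds}, these give totals of about $7729\lambda\sqrt T$, $846337\lambda$, $1904\sqrt{\mu\lambda}$, and $46.7\mu$.
\end{enumerate}
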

\begin{proof}
The clip and link bounds imply
\begin{align*}
 |\Gamma_j|&\le H_0,&\norm{(\Gamma_j)_u}&\le7680\lambda\sqrt T,&
 \norm{(\Gamma_j)_{uu}}&\le768000\lambda,\\
 \norm{(\Gamma_j)_y}&\le12\sqrt{\mu\lambda T},&
 \norm{(\Gamma_j)_{uy}}&\le114\sqrt{\mu\lambda},&
 \norm{(\Gamma_j)_{yy}}&\le3\mu/8.
\end{align*}
Thus $Q_j$ is convex in $y$, and so is $K_j$, proving strong concavity.
The uncapped objective is
$\lambda D_T-\mu\norm y^2+\Gamma_T$.
Its unique maximizing frame coordinates are
$w_{i,\nu}^*=\sqrt{\lambda/\mu}\,s_{i,\nu}(u)$, with zero orthogonal component.
They lie in the clip's linear region. At that point
$\mu\norm{y^*}^2\le2S^2T\lambda$ and
$\Gamma_T(u,y^*)=2\mu\norm{y^*}^2$, so $W-Q_T<0$.
Capping only decreases the objective and leaves its old maximum unchanged.
Identity~\eqref{eq:squares} proves~\eqref{eq:Bvalue}.

Since $W-Q_j=3\mu\norm y^2/2-4H_0+\Gamma_j$, it is at least $H_0$ whenever
$\norm y\ge R_y$. In the transition strip $0<W-Q_j<H_0$, one has
$\norm y\le R_y$. Hence
\[
 \norm{\nabla_y(W-Q_j)}\le 3\mu R_y+12\sqrt{\mu\lambda T}
                         <179\sqrt{\mu\lambda T}.
\]
Formula~\eqref{eq:maxhess} gives, respectively, upper bounds
\[
 768000\lambda+\frac{7680^2\lambda^2T}{768T\lambda},\qquad
 114\sqrt{\mu\lambda}+\frac{7680\cdot179\lambda T\sqrt{\mu\lambda}}{768T\lambda},
 \qquad 4\mu+\frac{179^2\mu\lambda T}{768T\lambda}
\]
on the $uu$, $uy$, and $yy$ cap blocks. Including $D_T$ and the additional dual
quadratic yields~\eqref{eq:Bblock}; the gradient estimate follows because the
$u$ gradient of the cap is $-(1-b)(\Gamma_j)_u$.
\end{proof}

\subsection{Primal protection and scaling}
Let $U\in\St(d_x,T)$ be independent of $V$. Define
\begin{gather*}
 \mathcal R_{R_x}(x)=\frac{x}{\sqrt{1+\norm x^2/R_x^2}},\qquad
 u(x)=\frac{U^\top\mathcal R_{R_x}(x)}r,\\
 \eta_0=c_0/100,\qquad C_x=1000\cdot10^6/c_0,\qquad R_x=C_xr\sqrt T.
\end{gather*}
Set
\begin{equation}\label{eq:Bobj}
 f_{U,V}(x,y)=f_{\rm core}(u(x),y)+\frac{\lambda\eta_0}{2r^2}\norm x^2.
\end{equation}
Its exact primal function is
\begin{equation}\label{eq:Bprimal}
 \Phi_{U,V}(x)=\lambda F_T(u(x))+\frac{\lambda\eta_0}{2r^2}\norm x^2.
\end{equation}

\begin{lemma}[Global certificate and stationarity barrier]\label{lem:Bbarrier}
Take $K_*=10^8$ and
\begin{equation}\label{eq:Bscaling}
 r=\frac{16K_*\epsilon}{c_0L},\qquad
 \lambda=\frac{64K_*\epsilon^2}{c_0^2L},\qquad
 \kappa\ge4K_*,\qquad T=\left\lfloor\frac{\Delta}{32\lambda}\right\rfloor\ge2.
\end{equation}
Then~\eqref{eq:Bobj} is globally $L$-smooth, $\mu$-strongly concave in $y$, has
$G_0=0$, and its primal gap is at most $\Delta$. Whenever $|u_T(x)|\le1/4$,
\begin{equation}\label{eq:Bbarrier}
 \norm{\nabla\Phi_{U,V}(x)}\ge2\epsilon.
\end{equation}
\end{lemma}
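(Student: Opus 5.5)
The plan is to verify the four claims of Lemma~\ref{lem:Bbarrier} in turn: smoothness and strong concavity, zero dual residual, the primal gap, and the barrier. Each follows from the block bounds~\eqref{eq:Bblock} of Lemma~\ref{lem:Bcap} together with the chain rule through the primal compression $x\mapsto u(x)=U^\top\mathcal R_{R_x}(x)/r$.

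\emph{Smoothness and concavity.} Strong concavity in $y$ is inherited from $f_{\rm core}$, since the primal quadratic does not involve $y$. For joint smoothness I would bound the Hessian blockwise. The map $\mathcal R_{R_x}$ is $1$-Lipschitz with second derivative $O(1/R_x)$, so $\nabla_x u$ has norm at most $1/r$ and $\nabla^2_x u$ is $O(1/(rR_x))$. Hence
\begin{itemize}
\item the $xx$ block is at most $10^6\lambda/r^2+8000\lambda\sqrt T\cdot O(1/(rR_x))+\lambda\eta_0/r^2$; with $R_x=C_xr\sqrt T$ the middle term is $O(\lambda/r^2)$;
\item the $xy$ block is at most $2000\sqrt{\mu\lambda}/r$;
\item the $yy$ block is at most $50\mu$.
\end{itemize}
With the scaling~\eqref{eq:Bscaling}, $\lambda/r^2=c_0^{0}\cdot L/(4K_*)$, which is up to constants $L/K_*$, so the $xx$ block is $O(L/100)$. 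The $xy$ block satisfies
\[
\sqrt{\mu\lambda}/r=\sqrt{\mu}\cdot\sqrt{L/(4K_*)}=O\bigl(L/\sqrt{\kappa K_*}\bigr),
\]
and the $yy$ block is $50\mu\le 50L/(4K_*)$. Summing the block norms gives at most $L$; the constants must be chased, but the choice $K_*=10^8$ appears designed for exactly this.

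\emph{Zero dual residual.} At $y=0$ the clip is linear with $\rho_{R_0}(0)=0$, and $\nabla_y$ of $-\frac\mu2\norm y^2$ vanishes. The cap is inactive at $y=0$ because $W(0)=-4H_0<Q\approx 0$. This leaves $\nabla_yK=-a_s\sum s_{i,\nu}(u(0))v_{i,\nu}$. At $u=0$ we have $\Psi(0)=0$, so every $s_{i,\pm}(0)=0$ and therefore $G_0=0$.

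\emph{Primal gap.} Using~\eqref{eq:Bprimal},
\[
\Phi(0)=\lambda F_T(0)=-\lambda\Theta(0)\le 0 .
\]
For the infimum, note that $F_T\ge-\Theta(u_1)-C_\Theta\sum_j\Psi(u_j)$, with the squares nonnegative by~\eqref{eq:squares}. Since $\Psi$ is bounded by $e$ and $\Theta\le C_\Theta$, this gives $F_T\ge -C_\Theta(1+eT)\ge-16T$, and the quadratic term is nonnegative. Hence the gap is at most $\lambda\cdot 16T+\lambda C_\Theta\le 32\lambda T\le\Delta$, using $T\ge2$ and $T\le\Delta/(32\lambda)$.

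\emph{Barrier.} This is the main obstacle. By the chain rule,
\[
\nabla\Phi(x)=\frac{\lambda}{r}\,J_{\mathcal R}(x)^\top U\nabla F_T(u)+\frac{\lambda\eta_0}{r^2}\,x .
\]
Given $|u_T|\le1/4$, Lemma~\ref{lem:corebounds} supplies a coordinate $j$ with $|u_j|\le1$ and $\partial_jF_T\le-c_0$; moreover all partials of $F_T$ are nonpositive. I would split into two cases.

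\emph{Case $\norm x\le R_x/2$.} Here $J_{\mathcal R}$ is close to the identity. More precisely, $J_{\mathcal R}=(1+\norm x^2/R_x^2)^{-1/2}\bigl(I-\hat x\hat x^\top\cdot\theta\bigr)$ with $\theta\le1/2$, an explicit rank-one perturbation. I would pair $\nabla\Phi$ with the test direction $-Ue_j$. The core term contributes at least of order $c_0\lambda/r$; the rank-one correction is controlled using $\langle \hat x,Ue_j\rangle$, which is small because $|u_j|\le1$ means the projection of $\mathcal R(x)$ onto $Ue_j$ is at most $r$. The quadratic term contributes at most $(\lambda\eta_0/r^2)\,|\langle x,Ue_j\rangle|$, and this is of order $\lambda\eta_0/r$ after relating $\langle x,Ue_j\rangle$ to $ru_j$ through $\mathcal R$. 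Since $\eta_0=c_0/100$, the net pairing is at least $c_0\lambda/(2r)=2\epsilon\cdot(\text{const})$. Plugging in the scaling, $c_0\lambda/r=4\epsilon$, so the result is at least $2\epsilon$.

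\emph{Case $\norm x$ large.} Here the quadratic term dominates. Its size $\lambda\eta_0\norm x/r^2$ is at least of order $\lambda\eta_0C_x\sqrt T/r$, while the core term is at most $\lambda\cdot10^6\sqrt T/r$. The choice $C_x=1000\cdot10^6/c_0$ makes the quadratic term dominate by a margin. I would pair with $\hat x$ and use the fact that $J_{\mathcal R}$ is contracting, which yields $\norm{\nabla\Phi}\ge2\epsilon$.
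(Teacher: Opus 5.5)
Your proposal is correct and follows the paper's proof. The ingredients match: the same blockwise Hessian bounds with $\lambda/r^2=L/(4K_*)$ and $\mu\le L/(4K_*)$, vanishing links at the origin for $G_0=0$, and a lower bound on $F_T$ for the gap. For the gap you use $F_T\ge D_T$ from~\eqref{eq:squares} where the paper uses $|F_T|\le 9T$, which works equally well. The barrier argument is the same split at $\norm{x}=R_x/2$: projection onto the $j$-th column of $U$ in one case, ridge domination in the other.

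One tightening is needed in the first barrier case. The factor $\ip{\hat x}{Ue_j}$ need not be small when $\norm{x}$ is comparable to $r$, so bounding $\theta$ by $1/2$ is not enough. Use instead the product $\theta\,\ip{\hat x}{Ue_j}=a\norm{x}\,r\,u_j/R_x^2$, where $a=(1+\norm{x}^2/R_x^2)^{-1/2}$. This product is at most $r/(\sqrt5 R_x)$ and gives exactly the paper's correction term $10^6/(\sqrt5 C_x)$.
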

\begin{proof}
The compression has Jacobian norm at most $1$ and second derivative operator norm
at most $3/R_x$. Its composition with~\eqref{eq:Bblock} shows that the joint
smoothness constant is at most $K_*(\lambda/r^2+\mu)$.
For clarity, the $xx$ block is bounded by
$10^6\lambda/r^2+24000\lambda\sqrt T/(rR_x)+\lambda\eta_0/r^2$;
the mixed block is at most $2000\sqrt{\mu\lambda}/r$ and the dual block at most
$50\mu$. The inequality $2\sqrt{st}\le s+t$ proves the asserted $K_*$ bound.
As $\lambda/r^2=L/(4K_*)$ and $\mu\le L/(4K_*)$, it is less than $L$.
Strong concavity is unchanged. At $x=y=0$ all square links and their first
partials vanish, the cap is inactive, and $\nabla_y f=0$.
The nonnegative ridge and $|F_T|\le9T$ give gap at most $18T\lambda\le\Delta$.

Here is a direct proof of the barrier, including large outputs. Write
$w=x/r$, $P=C_x\sqrt T$, and $a=(1+\norm w^2/P^2)^{-1/2}$, so $u=aU^\top w$.
The gradient of the normalized primal is
\[
 aU\nabla F_T(u)-\frac{a^2w}{P^2}\ip{u}{\nabla F_T(u)}+\eta_0w.
\]
If $\norm w\le P/2$, choose $j$ from Lemma~\ref{lem:corebounds}. The projection
of this gradient on the $j$-th column of $U$ is at most
\[
 -\frac{2c_0}{\sqrt5}
 +\frac{10^6}{\sqrt5 C_x}+\frac{\eta_0\sqrt5}{2}<-c_0/2.
\]
We used $|u_j|\le1$, $\norm u\le P/\sqrt5$, and
$\norm{\nabla F_T}\le10^6\sqrt T$.
If $\norm w>P/2$, its norm is at least
\[
 \eta_0\norm w-10^6\sqrt T
 > (\eta_0C_x/2-10^6)\sqrt T=4\cdot10^6\sqrt T>c_0/2.
\]
Multiplying by $\lambda/r=4\epsilon/c_0$ proves~\eqref{eq:Bbarrier}.
\end{proof}

\subsection{A fixed globally unbiased probability oracle}
Define the truncated seed, for $1\le j\le T$, by
\[
 D_j(u)=-\Theta(u_1)-C_\Theta\sum_{i=1}^{\min\{j,T-1\}}\Psi(u_i).
\]
Let $f_j(x,y)$ denote~\eqref{eq:Bobj} with $D_j$ and $K_j$ in place of $D_T,K_T$.
In particular $f_T=f_{U,V}$. The inclusion of the $i=j$ seed term is intentional.
For every query $z=(x,y)$ define the \emph{static} index
\begin{equation}\label{eq:kquery}
 k(z)=\max\left(\{1\}\cup\{i:|u_i(x)|>1/4\}
 \cup\{i:|\ip{v_{i,\nu}}{y}|>r\text{ for some }\nu\}\right)\le T.
\end{equation}
No transcript or previously revealed support occurs in this definition.

\begin{lemma}[Uniform residual bound]\label{lem:Bresidual}
For every query, including arbitrary query norms,
\[
 R(z):=\nabla f_T(z)-\nabla f_{k(z)}(z)
 \quad\hbox{satisfies}\quad \norm{R(z)}\le K_Ra_s,\qquad K_R=10^4.
\]
Here $\nabla f_{k(z)}(z)$ means the ordinary gradient of the function with that
fixed integer subscript, not differentiation through the index map.
\end{lemma}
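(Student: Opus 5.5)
The plan is to write $R(z)$ as a sum of three pieces and bound each uniformly, using only the \emph{static} definition~\eqref{eq:kquery} of $k=k(z)$: the seed difference $\lambda\nabla(D_T-D_k)(u(x))$, the uncapped link difference $\nabla(\Gamma_T-\Gamma_k)$, and the cap correction coming from $\mathcal M_{H_0}$. The primal ridge $\lambda\eta_0\norm x^2/(2r^2)$ and the dual quadratic are common to $f_T$ and $f_k$ and cancel. The large-norm regime needs no separate treatment. If $\norm y\ge R_y$, Lemma~\ref{lem:Bcap} gives $K_T=K_k=W-H_0/2$, so only the seed and link terms outside the cap could contribute. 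The same pointwise bounds below then apply verbatim.

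\emph{Seed term.} For every $i>k$ we have $|u_i(x)|\le1/4$. Since $\Psi$ vanishes identically on $(-\infty,1/2]$, both $\Psi(u_i)$ and $\Psi'(u_i)$ are zero for $k<i\le T-1$. The term $i=k$ is included in $D_k$ by design. Hence $\nabla(D_T-D_k)(u(x))=0$ exactly.

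\emph{Link term.} Write $\Gamma_T-\Gamma_k=a_s\sum_{i>k}\sum_\nu s_{i,\nu}(u)\rho_{R_0}(w_{i,\nu})$ with $w_{i,\nu}=\ip{v_{i,\nu}}{y}$.
\begin{itemize}
\item For $i\ge k+2$, both $u_{i-1}$ and $u_i$ lie in $[-1/4,1/4]$. Then $p=\sqrt\Psi$ and $p'$ vanish at $\pm u_{i-1}$, so $s_{i,\pm}$ and its gradient vanish. These terms contribute nothing.
\item For $i=k+1$, only the primal factor may be nonzero, but the dual projection is small: $|w_{k+1,\nu}|\le r$. One checks $r\le R_0$, because $R_0=8S\sqrt{\lambda/\mu}$ is large when $\kappa\ge4K_*$. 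So $\rho_{R_0}(w)=w$ and $|\rho'|\le1$.
\end{itemize}
The $y$-gradient of the surviving term is at most $a_s\cdot2S=8a_s$. Its $x$-gradient is at most $a_s\cdot2\cdot20\sqrt2\cdot r\cdot\norm{\partial u/\partial x}$. The compression $u(x)=U^\top\mathcal R_{R_x}(x)/r$ has Jacobian norm at most $1/r$, so the factor $r$ cancels. Both parts are therefore $O(a_s)$. The same computation gives the value bound $|\Gamma_T-\Gamma_k|\le 8a_s r$.

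\emph{Cap correction (expected main obstacle).} Write $\nabla K_j=(1-b_j)\nabla Q_j+b_jW'$ with $b_j=\chi_{H_0}'(W-Q_j)$. Then
\[
\nabla K_T-\nabla K_k=(1-b_T)\nabla(Q_T-Q_k)+(b_T-b_k)\nabla(W-Q_k).
\]
The first summand is controlled by the link bound above. For the second, $|b_T-b_k|\le|Q_T-Q_k|/H_0\le 8a_s r/H_0$. Moreover $b_T-b_k\neq0$ only if $W-Q_T$ or $W-Q_k$ lies in the transition strip $(0,H_0)$, where Lemma~\ref{lem:Bcap} gives $\norm y\le R_y$ and $\norm{\nabla_y(W-Q_j)}\le179\sqrt{\mu\lambda T}$. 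In that case the $y$-part is at most
\[
\frac{8a_s r\cdot179\sqrt{\mu\lambda T}}{768T\lambda}=O\!\left(a_s\,r\sqrt{\mu/(\lambda T)}\right).
\]
Substituting the scalings~\eqref{eq:Bscaling}, $r/\sqrt\lambda=2\sqrt{K_*/L}$, so this is $O(a_s\sqrt{K_*/\kappa})=O(a_s)$ since $\kappa\ge4K_*$. The $x$-part uses $\norm{(\Gamma_k)_u}/r\le7680\lambda\sqrt T/r$. Multiplied by $8a_s r/H_0$, this gives $O(a_s/\sqrt T)$.

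Collecting the constants from the three pieces should land well below $K_R=10^4$. The delicate step is the bookkeeping that the strip condition may hold for $T$ but not for $k$, or the reverse. I would handle this by noting that $\chi_{H_0}'$ is $1/H_0$-Lipschitz. Then $|b_T-b_k|\le|Q_T-Q_k|/H_0$ holds everywhere, and $b_T-b_k\ne0$ forces at least one of the two points into the strip, where the gradient bound of Lemma~\ref{lem:Bcap} is valid.
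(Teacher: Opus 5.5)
Your proposal is correct and follows the paper's proof essentially step for step: the seed difference vanishes identically, only link $k+1$ survives with dual projections at most $r$, the cap weights differ by at most $|\Gamma_T-\Gamma_k|/H_0$, and the scaling $r\sqrt{\mu}/\sqrt{\lambda}\le 1$ together with the $1/r$ compression Jacobian gives an $O(a_s)$ bound (the paper obtains $150a_s$). The one loose step is your claim that $b_T\neq b_k$ forces one of the two points into the strip $(0,H_0)$, since one value could lie at or below $0$ while the other lies at or above $H_0$. It is easily repaired: what you actually need is $\|y\|<R_y$, and that already follows from your own observation that both caps equal $W-H_0/2$ when $\|y\|\ge R_y$. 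In any case $8a_sr\le 16\lambda<H_0$ excludes the straddling case.
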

\begin{proof}
For $k=T$ the residual is zero. Otherwise all primal coordinates beyond $k$ have
absolute value at most $1/4$. Their gates and all gate derivatives vanish.
Thus $D_T,D_k$ and their gradients agree, and $\Gamma_T-\Gamma_k$ consists only
of the two links with index $k+1$.  Both of their dual projections have magnitude
at most $r$, so~\eqref{eq:clip} gives
\[
 |\Delta\Gamma|\le8a_sr,\quad
 \norm{\nabla_u\Delta\Gamma}\le60a_sr,\quad
 \norm{\nabla_y\Delta\Gamma}\le8a_s.
\]
The two cap weights differ by at most
$|\Delta\Gamma|/H_0\le a_sr/(96T\lambda)$.
For $\norm y\ge R_y$ both caps are exactly the same radial function, so their
residual is zero. For $\norm y<R_y$, using the gradient estimates from
Lemma~\ref{lem:Bcap} gives
\[
 \norm{R_u}\le60a_sr+80a_sr/\sqrt T\le140a_sr,
\]
\[
 \norm{R_y}\le8a_s+\frac{179}{96}\frac{a_sr\sqrt\mu}{\sqrt{\lambda T}}
             \le10a_s.
\]
The last inequality follows from $r\sqrt\mu/\sqrt\lambda\le1$, which is
part of~\eqref{eq:Bscaling}. Primal composition has Jacobian norm at most $1/r$,
so the original-coordinate residual has norm at most $150a_s<K_Ra_s$.
\end{proof}

For $\sigma>0$ choose
\begin{equation}\label{eq:Bp}
 p=\min\{1,K_R^2a_s^2/\sigma^2\},
 \qquad \zeta\sim\operatorname{Bernoulli}(p),
\end{equation}
and return the fresh vector
\begin{equation}\label{eq:Boracle}
 \mathsf G(z)=\nabla f_{k(z)}(z)+\frac\zeta p
                    [\nabla f_T(z)-\nabla f_{k(z)}(z)].
\end{equation}
For $\sigma=0$ define $p=1$ and use the same formula. This is a fixed measurable
Markov kernel for every sampled $U,V$. Exactly,
\[
 \E[\mathsf G(z)]=\nabla f_T(z),\qquad
 \E\norm{\mathsf G(z)-\nabla f_T(z)}^2
  =(1/p-1)\norm{R(z)}^2\le\sigma^2.
\]
The Bernoulli probability is a constant independent of the query and of both
hidden frames. No claim is made that a realized oracle sample is itself a gradient
of a sample function; the problem's oracle contract does not require that property.

\subsection{Product-space lifting, with the final output included}
\begin{theorem}[Unrestricted stochastic progress]\label{thm:B}
For the parameters above and an integer horizon
$N\le(T-1)/(12p)$, choose
\begin{align}
 d_x&=T+3+\left\lceil64\frac{R_x^2}{r^2}
                       \log(96T(N+1))\right\rceil,\nonumber\\
 d_y&=2(T-1)+3+\left\lceil64\frac{R_y^2}{r^2}
                       \log(192T(N+1))\right\rceil.\label{eq:Bdimensions}
\end{align}
On the fixed distribution of independent uniform $U,V$ and the fixed oracle
\eqref{eq:Boracle}, every unrestricted adaptive randomized algorithm has success
probability at most $1/6$ after $N$ calls.
Consequently, in the stated nontrivial parameter regime,
\[
 \Tloc\ge c\left(\frac{L\Delta}{\epsilon^2}
              +\frac{\kappa L\Delta\sigma^2}{\epsilon^4}\right),
 \qquad G_0=0.
\]
\end{theorem}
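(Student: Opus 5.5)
The plan is a lifted zero-chain argument in the style of Theorem~\ref{thm:Dhorizon}, with the Bernoulli coin of \eqref{eq:Boracle} limiting how fast new links are revealed. Lemma~\ref{lem:Bbarrier} says success requires $|u_T(\widehat x)|>1/4$. I will therefore show that, with probability at least $5/6$, neither the $N$ queries nor the output (appended as a dummy query) ever makes the chain progress to level $T$.

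\textbf{Auxiliary interaction.} Maintain an exposure level $e_t$, starting at $e_0=1$, with columns $u^{(1)}$ and $v_{2,\pm}$ of $U,V$ given for free. At call $t$ the auxiliary oracle returns
\[
\nabla f_{e_t}(z_t)+\frac{\zeta_t}{p}\bigl[\nabla f_{e_t+1}(z_t)-\nabla f_{e_t}(z_t)\bigr],
\]
computed with radial terms using the full norms $\norm x,\norm y$. If $\zeta_t=1$, it first exposes the columns of level $e_t+1$ and then sets $e_{t+1}=e_t+1$; otherwise $e_{t+1}=e_t$. This reply depends only on exposed columns, the seed and the coins, so Lemma~\ref{lem:ghost} applies. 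The number of level increments after $N$ calls is $\operatorname{Bin}(N,p)$. Since $N\le(T-1)/(12p)$, Markov's inequality gives $\Pp(e_N\ge T)\le Np/(T-1)\le 1/12$.

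\textbf{Coupling on a good event.} The good event requires that, at every auxiliary query (including the output query), every unexposed primal column has $|\ip{u^{(i)}}{\mathcal R_{R_x}(x)}|\le r/4$ and every unexposed dual column has $|\ip{v_{i,\nu}}{y}|\le r$ whenever $\norm y<R_y$. Both conditions are stated in the original coordinates, so the relevant threshold is $r$ and the compression bound $\norm{\mathcal R_{R_x}(x)}\le R_x$ applies.

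On this event the static index satisfies $k(z_t)\le e_t$. The key algebraic claim is that $\nabla f_j(z_t)$ is the same for every $j\ge k(z_t)$. The gates beyond $k$ vanish, so $\nabla D_j=\nabla D_k$. The added links use hidden dual coordinates in the clip's linear region, and their gate factors $s_{i,\pm}$ vanish together with their derivatives. The cap weights coincide because $\Gamma_j=\Gamma_k$. This is the same computation as in Lemma~\ref{lem:Bresidual}, but now yielding an exact zero residual. For $\norm y\ge R_y$ the radial caps agree with no threshold needed.

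Hence, with the same coin, the true reply $\nabla f_{k}+\frac{\zeta}{p}(\nabla f_T-\nabla f_k)$ equals $\nabla f_{e_t}$ whenever $\zeta=0$. When $\zeta=1$ it equals $\nabla f_T=\nabla f_{e_t+1}$, since $e_t+1\ge k$. This is where the difficulty lies: I need $f_j$ to be unchanged for $j\ge k$ up to $T$ even though the true reply uses level $T$. If instead only $j\le e_t+1$ were exact, I would add the condition $k(z_t)\le e_t$ to the good event, which is exactly what the thresholds provide.

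Lemmas~\ref{lem:ghost} and~\ref{lem:sphere} then bound each violation probability by $2\exp(-(d-\text{exposed})\,r^2/(32R^2))$, with $R=R_x$ or $R_y$. A union bound over at most $2T(N+1)$ projections, combined with \eqref{eq:Bdimensions}, keeps the total failure probability below $1/12$.

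\textbf{Conclusion and rate.} On the good event with $e_N<T$, the dummy output query satisfies $|u_T(\widehat x)|\le1/4$, because column $T$ is unexposed and the event applies to it. Lemma~\ref{lem:Bbarrier} then gives failure. So the success probability is at most $1/12+1/12=1/6<2/3$, and therefore $\Tloc>(T-1)/(12p)$.

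Substituting \eqref{eq:Bscaling} gives $T\asymp\Delta/\lambda\asymp L\Delta/\epsilon^2$, and $T\ge2$ holds when $L\Delta\ge C_0\epsilon^2$. Also $1/p=\max\{1,\sigma^2/(K_R^2a_s^2)\}$ with $a_s^2=4\mu\lambda\asymp\epsilon^2/\kappa$. Hence $T/p\gtrsim L\Delta/\epsilon^2+\kappa L\Delta\sigma^2/\epsilon^4$, and the condition $\kappa\ge4K_*$ fixes $\kappa_0$. The main obstacle I expect is verifying the exact equality of the true and auxiliary replies, including the cap's transition region, under only the threshold conditions.
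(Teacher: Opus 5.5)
There is a genuine gap, in the step you flagged yourself: the ``key algebraic claim'' that $\nabla f_j(z_t)$ is the same for every $j\ge k(z_t)$ is false.

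\textbf{Why the claim fails.} For links $i\ge k+2$ the gates do vanish together with their derivatives, since they are controlled by $\Psi(\pm u_{i-1})$ and $|u_{i-1}|\le1/4$. Link $k+1$, however, is gated by $\Psi(\pm u_k)$, and $u_k$ is exactly the coordinate allowed to be large. In the clip's linear region $\rho_{R_0}'=1$, so this link contributes $a_s s_{k+1,\nu}(u)v_{k+1,\nu}$ to the dual gradient and a nonzero primal term, and the cap weights differ accordingly. This contribution is precisely the residual $R(z)$ of Lemma~\ref{lem:Bresidual}, which is bounded by $K_Ra_s$ but not zero.

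Your claim would prove too much. If it held, then $R\equiv0$ and the oracle~\eqref{eq:Boracle} would be the exact gradient for every $p$. Your argument would then exclude $(T-1)/(12p)$ calls, with $p$ arbitrarily small, on a fixed zero-variance family. That contradicts the $\sigma=0$ upper bound.

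The correct identity is $\nabla f_j(z)=\nabla f_{k(z)+1}(z)$ for $j\ge k(z)+1$. Consequently your failure reply $\nabla f_{e_t}(z_t)$ is wrong whenever $k(z_t)<e_t$ and link $k(z_t)+1$ is active. For example, take $e_t=2$ and query with $u_1=2$, $u_2=0$, and all dual projections at most $r$. Then $k=1$, and the true failure reply $\nabla f_1$ omits link $2$ (where $s_{2,-}>0$), while $\nabla f_2$ contains it. Likewise, for $p<1$ the true success reply is $\nabla f_k+p^{-1}(\nabla f_T-\nabla f_k)$, not $\nabla f_T$.

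\textbf{The fix.} This is what the paper does: the auxiliary reply must depend on the query through the static index. Compute $k_0(z)$ by~\eqref{eq:kquery} using only exposed columns. On a failure reply $\nabla f_{k_0}(z)$. On a success, first expose stage $K+1$, then reply $\nabla f_{k_0}(z)+p^{-1}[\nabla f_{\min\{K+1,T\}}(z)-\nabla f_{k_0}(z)]$. On the good event the thresholds give $k(z)=k_0\le K$ exactly, and the corrected identity gives $\nabla f_T=\nabla f_{K+1}$. Hence the two replies coincide with the same coin for every $k_0\le K$, not only when $k_0=K$.

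\textbf{A smaller problem.} You pre-expose $v_{2,\pm}$ at level $1$, i.e.\ dual link columns one stage ahead of their primal column. The algorithm can then place a large projection on the known $v_{e_t+1,\nu}$, which forces $k(z_t)=e_t+1$. The true failure reply then contains link $e_t+1$, which depends on the hidden column $u^{(e_t+1)}$ through $\Theta(\mp u_{e_t+1})$. Your good event cannot exclude this, because it constrains only unexposed columns. So ``$k(z_t)\le e_t$ on the good event'' fails. The columns $v_{i,\pm}$ must be exposed together with primal column $i$.

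The rest of your plan matches the paper: the Markov bound on the number of coin successes, the union bound with~\eqref{eq:Bdimensions}, the dummy output query, and the rate computation.
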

\begin{proof}
In a more informative auxiliary interaction, reveal $U$'s first column at the
start. Let $K=1$ be the number of revealed primal stages. The revealed dual columns
are exactly those of links $2,\ldots,K$. Draw a fresh independent coin~\eqref{eq:Bp}
at each call. On a success reveal primal column $K+1$ and its two dual link columns,
and increase $K$ by one, until $K=T$. Give the algorithm these columns and the coins
for free. This additional information can only strengthen it.

For $\norm y<R_y$, define $k_0$ by~\eqref{eq:kquery} using only the columns
revealed \emph{before} the query. On a failure, the auxiliary reply is
$\nabla f_{k_0}(z)$. On a success, it is
\[
 \nabla f_{k_0}(z)+\frac1p
       [\nabla f_{\min\{K+1,T\}}(z)-\nabla f_{k_0}(z)],
\]
where here $K$ is the exposure count before that success. All quantities are
computable using the columns now exposed.  For $\norm y\ge R_y$, the auxiliary
reply uses the exposed truncated seed and the common fully radial cap; the
residual is zero. Its coins still expose additional columns, which only makes the
auxiliary interaction more informative. These rules define the auxiliary process
on \emph{every} trajectory, not only on a good event.

The good projections before each query are
\[
 |\ip{u_j}{\mathcal R_{R_x}(x)}|\le r/4\quad(j>K),
\]
\[
 |\ip{v_{i,\nu}}{y}|\le r\quad(i>K,\ \norm y<R_y).
\]
On this event $k(z)=k_0\le K$ for a small dual query. Gates after that stage are
flat. A non-reveal reply therefore uses only the known columns; a reveal reply
can involve at most the next stage. This proves exact equality with the auxiliary
reply. For a large dual query, all caps are the same radial function, regardless
of how large its hidden dual projections are. The primal good projections ensure
that all hidden seed terms are flat. Equality holds in this case as well.

Append the final primal output as a dummy query, imposing the same compressed
primal projection bounds on its unrevealed columns. No oracle response is needed
for this dummy query. Under the auxiliary filtration, Lemma~\ref{lem:ghost}
leaves independent uniform completions of $U,V$. The informative primal query has
norm at most $R_x$ at every step and at the output; a dual threshold is tested only
at norm at most $R_y$. Lemma~\ref{lem:sphere} and~\eqref{eq:Bdimensions}, followed
by a union bound over at most $T(N+1)$ primal and $2TN$ dual projections, give a
total violation probability less than $1/12$.

The number of stages exposed is at most $1+\sum_{t=1}^N\zeta_t$. Hence
\[
 \Pp(K_N=T)\le\Pp\!\left(\sum_{t=1}^N\zeta_t\ge T-1\right)
            \le\frac{Np}{T-1}\le\frac1{12}.
\]
If no violation occurs and $K_N<T$, the output has $|u_T(\widehat x)|\le1/4$,
so Lemma~\ref{lem:Bbarrier} excludes success. The union of the two exceptional
events has probability at most $1/6$. This includes the algorithm's seed and
requires no restriction on its measurable query maps.

Finally $a_s^2=4\mu\lambda$ is a fixed universal constant times
$\epsilon^2/\kappa$. Therefore $1/p$ is within universal constant factors of
$1+\kappa\sigma^2/\epsilon^2$. Under a sufficiently large universal gap
threshold, $T=\Theta(L\Delta/\epsilon^2)$. Integer rounding changes only the
universal lower constant; if the displayed excluded horizon is below one, the
disjoint zero-query testing family below supplies the corresponding one-call
bound. This proves the rate.
\end{proof}

\section{A Two-Dimensional Unrestricted Testing Family}\label{sec:C}

A single primal coordinate and a single dual coordinate suffice for the local statistical lower bound. Use the clip in
\eqref{eq:clip}, and set $\psi_R(y)=\rho_R(y)^2/2$. Then
\[
 |\psi_R'(y)|\le3R/2,\qquad |\psi_R''(y)|\le13/4.
\]
For $\kappa\ge1120$ set $a=b=L/16$ and
\begin{align*}
 Q_+&=a+b^2/(5\mu),& Q_-&=a+b^2/(7\mu),\\
 d&=2\epsilon\frac{Q_++Q_-}{Q_+-Q_-},& R&=\frac{2(d+\epsilon)}b.
\end{align*}
One has $12\epsilon\le d\le13\epsilon$ and $R\le448\epsilon/L$.
Define the fixed pair
\begin{equation}\label{eq:Cobj}
 f_s(x,y)=\frac a2x^2+dx+bxy-3\mu y^2+s\mu\psi_R(y),\qquad s\in\{-1,1\}.
\end{equation}
Use exact primal derivatives and independent $N(0,\sigma^2)$ noise in the dual
component at every query.

\begin{theorem}[True-primal-gradient testing]\label{thm:C}
For $\sigma>0$, the family~\eqref{eq:Cobj} is globally admissible, has $G_0=0$,
and has primal gap at most $151424\epsilon^2/(\kappa L)$. Any arbitrary adaptive
randomized algorithm succeeding with probability $2/3$ on both signs needs
\[
 N\ge\frac{2}{9\cdot903168}\frac{\kappa^2\sigma^2}{\epsilon^2}.
\]
A zero-query algorithm, deterministic or randomized, cannot succeed on both signs with probability
$2/3$, even when $\sigma=0$.
\end{theorem}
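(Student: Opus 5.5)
The plan is to verify admissibility directly, compute the two primal functions in closed form near the region of interest, and then run a standard two-point KL argument.

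\emph{Admissibility.} Write $g_s(y)=-3\mu y^2+s\mu\psi_R(y)$. Since $|\psi_R''|\le13/4$, we get
\[
g_s''\le-6\mu+\tfrac{13}{4}\mu<-\mu,
\]
so $f_s$ is $\mu$-strongly concave in $y$. The Hessian of $f_s$ has entries $a$, $b$ and $|g_s''|\le 10\mu$. With $a=b=L/16$ and $\kappa\ge1120$, its operator norm is at most $2L/16+10\mu<L$. At the origin, $\partial_y f_s(0,0)=s\mu\psi_R'(0)=0$, so $G_0=0$. The oracle is exact in $x$ and adds $N(0,\sigma^2)$ noise in $y$, so it is unbiased with variance $\sigma^2$.

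\emph{Primal functions on the relevant region.} While the inner maximizer satisfies $|y^*(x)|\le R$, we have $\psi_R(y)=y^2/2$. The inner problem is then the quadratic $bxy-(3\mu-s\mu/2)y^2$, i.e.\ curvature $5\mu$ for $s=+1$ and $7\mu$ for $s=-1$ (the sign convention must match $Q_\pm$). This gives
\[
y^*=\frac{bx}{5\mu}\ \text{or}\ \frac{bx}{7\mu},\qquad \Phi_s'(x)=Q_sx+d .
\]
The choice of $d$ makes the two stationary windows $\{|Q_sx+d|\le\epsilon\}$ disjoint: one is centred at $-d/Q_s$ with half-width $\epsilon/Q_s$, and $d(Q_+-Q_-)=2\epsilon(Q_++Q_-)$ separates them. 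We must also check $|y^*|\le R$ there. For $x$ in these windows, $|x|\le(d+\epsilon)/Q_-$, so $|y^*|\le b(d+\epsilon)/(5\mu Q_-)$. Using $Q_-\ge b^2/(7\mu)$, this is at most $7(d+\epsilon)/(5b)<R$.

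\emph{Excluding successful outputs elsewhere.} Outside the clipped region, $\Phi_s$ is still a max of functions that are $a$-strongly convex in $x$, hence $a$-strongly convex, so $\Phi_s'$ is monotone. Evaluating $\Phi_s'$ at the window endpoints (which lie inside the quadratic region) shows that all successful outputs lie in $J_s$, and the sets $J_+$, $J_-$ are disjoint.

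\emph{Primal gap.} Strong convexity gives
\[
\Phi_s(0)-\inf\Phi_s\le \frac{d^2}{2a}\le\frac{8\cdot169\,\epsilon^2}{L}.
\]
This is too loose for the claimed $151424\epsilon^2/(\kappa L)$. Instead I would use the curvature $Q_s\ge b^2/(7\mu)=\kappa L/1792$ on the quadratic region. That gives a gap of $d^2/(2Q_s)\le 169\cdot1792\,\epsilon^2/(2\kappa L)\approx151424\,\epsilon^2/(\kappa L)$, which matches. Extending this bound to the nonquadratic region needs the argument that the minimizer lies in the quadratic region and that $\Phi_s$ is convex along the segment. This is the step I expect to need the most care.

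\emph{Information bound.} The two oracle means differ only in the dual component, by
\[
2\mu|\psi_R'(y)|\le 3\mu R\le 3\mu\cdot\frac{448\,\epsilon}{L}=\frac{1344\,\epsilon}{\kappa},
\]
uniformly over all queries, thanks to the clip. Each query therefore contributes conditional KL at most $(1344\,\epsilon/\kappa)^2/(2\sigma^2)$. By the adaptive chain rule, with the algorithm seed shared, the transcript KL is at most $N$ times this.

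\emph{Testing argument.} Success on both signs with probability $2/3$, with disjoint success sets, forces total variation at least $1/3$. Pinsker's inequality gives $1/9\le 2\,\mathrm{TV}^2\le \mathrm{KL}$. Since $1344^2/2=903168$, we get
\[
N\ge \frac{2}{9\cdot903168}\cdot\frac{\kappa^2\sigma^2}{\epsilon^2}.
\]
This matches the displayed constant, perhaps with Pinsker's inequality applied in the form $\mathrm{TV}\le\sqrt{\mathrm{KL}/2}$.

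\emph{Zero queries.} With no queries, the output law is the same under both signs, and the success sets are disjoint. So the two success probabilities sum to at most $1$, and both cannot be at least $2/3$, even when $\sigma=0$.
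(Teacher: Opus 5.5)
Your proposal is correct and follows essentially the same route as the paper. The steps match: direct admissibility checks; the closed form $\Phi_s'(x)=Q_sx+d$ wherever the quadratic dual maximizer stays in the clip's linear range; strong convexity of $\Phi_s$ to identify the disjoint success intervals; and a per-query Gaussian KL bound $(3\mu R)^2/(2\sigma^2)$ combined with the adaptive chain rule and Pinsker.

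Two small touch-ups are worth making.

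\begin{itemize}
\item The gap step you flag needs no extra care. Your region check already covers every $|x|\le(d+\epsilon)/Q_-$, and this contains the whole segment from $0$ to the global minimizer $-d/Q_s$. Hence $\Phi_s(0)-\inf\Phi_s=d^2/(2Q_s)$ exactly, which is at most $151424\,\epsilon^2/(\kappa L)$ with equality in the constant, not approximately.
\item In the Pinsker step, $\TV\ge 1/3$ gives $2\TV^2\ge 2/9$, not $1/9$. It is the value $2/9$ that produces the stated factor $2$ in $N\ge 2\kappa^2\sigma^2/(9\cdot 903168\,\epsilon^2)$.
\end{itemize}
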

\begin{proof}
The dual second derivative is at most $-6\mu+(13/4)\mu<-\mu$ in the sense of
quadratic forms. The joint Hessian norm is bounded by
$\max\{a,(37/4)\mu\}+b<L$. At $(0,0)$ its dual derivative is zero.
For $x$ in
\[
 I_s=\left[\frac{-d-\epsilon}{Q_s},\frac{-d+\epsilon}{Q_s}\right],
\]
the quadratic dual optimizer $y_s=bx/((6-s)\mu)$ satisfies
$|y_s|\le(d+\epsilon)/b=R/2$. It is therefore also the unique global dual
optimizer of the clipped objective. In this region $\Phi_s'(x)=Q_sx+d$.
Global $a$-strong convexity of $\Phi_s$ makes $I_s$ its exact successful set.
The distance between the centers of $I_+,I_-$ is twice the sum of their radii,
by the definition of $d$, so the sets are disjoint. The optimum itself is in the
same region, giving
\[
 \Phi_s(0)-\inf\Phi_s=\frac{d^2}{2Q_s}
 \le\frac{896\cdot13^2\epsilon^2}{\kappa L}.
\]

At every arbitrary query, the exact primal derivative $ax+d+by$ is identical
under both signs, while the dual mean difference is $2\mu\psi_R'(y)$.
The conditional KL divergence of the oracle replies is at most
\[
 \frac{(3\mu R)^2}{2\sigma^2}
 \le903168\frac{\epsilon^2}{\kappa^2\sigma^2}.
\]
Include the independent algorithm seed in the transcript. The chain rule for
relative entropy sums this bound over the $N$ adaptive queries, since at a fixed
past transcript the next query is the same measurable map under both hypotheses.
The final output is a measurable function of the transcript, so its total
variation distance is no larger. Success on both signs implies that this distance
is at least $1/3$, using the event $\widehat x\in I_+$. Pinsker's inequality
$\TV^2\le\KL/2$ yields the stated lower bound. A zero-query output has the same
law under both signs; disjointness alone proves the last assertion.
\end{proof}

\section{The Accelerated Exact-Oracle Lower Bound}\label{sec:Anew}

We now give the missing exact-oracle construction. Its starting point is the
interleaved nonlinear/primal and quadratic/dual chain of Li et al.~\cite{Li2021}.
The changes below are essential: Robin endpoint penalties make every dual
interaction convex before negation, bounded-slope edges allow primal compression
without a condition-number loss, and flat edges give exact transcript locality.

\subsection{A bounded-coefficient Robin chain}
Fix integers $n\ge10$ and $T\ge2$. Let $A_n$ be the path Laplacian, with
endpoint diagonal entries $1$, other diagonal entries $2$, and off-diagonal
entries $-1$. Put
\[
 H_n=n^{-2}I+A_n+n^{-1}(e_1e_1^\top+e_ne_n^\top),\quad
 a_1=(H_n^{-1})_{11}/n,\quad a_2=(H_n^{-1})_{1n}/n,
\]
\[
 C=12/a_2,\qquad c=C(1+a_2-a_1)/2.
\]
These coefficients obey
\begin{equation}\label{eq:Robinbounds}
 1/7\le a_2\le a_1<1,\qquad 12\le C\le84,\qquad 0<c\le42.
\end{equation}
Here is a direct verification. Set $\cosh\theta=1+1/(2n^2)$ and
$h=(n^{-1}+(2n^2)^{-1})/\sinh\theta$. Then $0<\theta\le1/n$ and
$1\le h\le1.05$. The solution of the left homogeneous boundary recurrence is
\[
 u_i=\cosh((i-1)\theta)+h\sinh((i-1)\theta),\qquad u_1=1.
\]
It has $u_i\le3$. The right boundary denominator is
\[
 d_n=(n^{-1}+(2n^2)^{-1})u_n+
 \sinh\theta\,[\sinh((n-1)\theta)+h\cosh((n-1)\theta)]\le7/n.
\]
Thus $(H_n^{-1})_{1n}=1/d_n\ge n/7$.
The inverse is entrywise positive, by the discrete maximum principle.
The same principle bounds its endpoint-source columns by their endpoint maxima.
Finally the variational formula for the inverse and
$H_n\succeq n^{-1}(e_1e_1^\top+e_ne_n^\top)$ imply that its two-by-two
endpoint inverse block is strictly less than $nI$. Consequently
$0<a_1-a_2<1$, proving all the other assertions in~\eqref{eq:Robinbounds}.

Use the clip $\rho_2$ in~\eqref{eq:clip}, and write
$u=(x_1,z_2,x_2,\ldots,z_T,x_T)\in\R^{2T-1}$, with $q=\rho_2(u)$
coordinatewise. In particular $|q_j|\le3$. With the $\Psi,\Theta$ already defined
in Section~\ref{sec:B}, define
\[
 N(q)=-\Theta(q_{x_1})+\sum_{i=2}^T
 [\Psi(-q_{z_i})\Theta(-q_{x_i})-\Psi(q_{z_i})\Theta(q_{x_i})].
\]
For each $i<T$, let $y^{(i)}\in\R^n$. Introduce the $n+1$ edge arguments
\begin{align*}
 t_{i,0}&=y^{(i)}_1/\sqrt n-\sqrt C\,q_{x_i},\\
 t_{i,j}&=y^{(i)}_j-y^{(i)}_{j+1},\quad 1\le j<n,\\
 t_{i,n}&=y^{(i)}_n/\sqrt n+\tfrac{\sqrt C}{2}q_{z_{i+1}}.
\end{align*}
If all edges have energy $t^2/2$, elimination of $y$ gives exactly
\begin{equation}\label{eq:Aclippedideal}
 J(u)=N(q)+6\sum_{i<T}(q_{x_i}-q_{z_{i+1}}/2)^2
\end{equation}
when the primal-only quadratic is $c\sum_{i<T}(q_{x_i}^2+q_{z_{i+1}}^2/4)$.
Indeed the expanded dual Hessian is $H_n$, its source is
$\sqrt{C/n}(q_{x_i}e_1-q_{z_{i+1}}e_n/2)$, and substitution of its inverse
leaves the cross coefficient $-Ca_2/2=-6$ and diagonal coefficients $6,3/2$.

\subsection{Flat bounded-slope edges and dual protection}
For $b=1000$ and $\delta>0$, define the convex, continuously differentiable function
\[
 \varphi_{\delta,b}(t)=
 \begin{cases}
 0,&|t|\le\delta,\\
 (|t|-\delta)^2/2,&\delta<|t|\le\delta+b,\\
 b(|t|-\delta)-b^2/2,&|t|>\delta+b.
 \end{cases}
\]
Its derivative is $1$-Lipschitz, has magnitude at most $b$, and differs from
$\varphi_{0,b}'$ by at most $\delta$ everywhere. Keep $\varphi_\delta$ from
Section~\ref{sec:D} for the primal-only terms. Put $M=(n+1)(T-1)$ and
\begin{gather*}
 Q_\delta(u,y)=\sum_{i<T}\sum_{j=0}^n\varphi_{\delta,b}(t_{i,j}),\qquad
 D_\delta(u)=N(q)+2c\sum_{i<T}[\varphi_\delta(q_{x_i})+
                                    \varphi_\delta(q_{z_{i+1}})/4],\\
 R_y=1000n\sqrt T,\quad W(y)=20(\norm y^2-R_y^2),\quad
 \overline Q_\delta=\mathcal M_{R_y^2}(Q_\delta,W),\\
 F_\delta(u,y)=D_\delta(u)-\frac{\norm y^2}{2n^2}-\overline Q_\delta(u,y).
\end{gather*}
In ambient dual space, the edge arguments use $V^\top y$ for a frame with
$n(T-1)$ columns; both norms above remain the full ambient norm.
Set
\begin{equation}\label{eq:Adelta}
 c_0=e^{-1/2},\quad \eta=c_0/100,\quad
 \delta=\frac{\eta}{10^8 n^2\sqrt M},\quad K=10^8.
\end{equation}

\begin{lemma}[Global and value-function certificates]\label{lem:Acore}
$F_\delta$ is $n^{-2}$-strongly concave in $y$. Its joint gradient is
$K$-Lipschitz and $\norm{\nabla_u F_\delta}\le K\sqrt T$ globally.
Its maximum $J_\delta(u)=\max_yF_\delta(u,y)$ satisfies
\begin{equation}\label{eq:Avalueerror}
 \norm{\nabla J_\delta(u)-\nabla J(u)}\le\eta/100
 \quad\hbox{for every }u.
\end{equation}
Moreover, $J_\delta(u)\ge-1000T$, $J_\delta(0)=-\Theta(0)$, and its dual
maximizer at $u=0$ is zero. For $\norm y\ge2R_y$, the cap is exactly
$W-R_y^2/2$, independent of $u$ and of the entire dual frame.
\end{lemma}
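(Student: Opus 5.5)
The plan is to follow the template of Lemmas~\ref{lem:Dregular} and~\ref{lem:Bcap}: verify curvature blockwise, then compare the flattened and capped maximization with the ideal quadratic elimination~\eqref{eq:Aclippedideal}.

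\textbf{Step 1: strong concavity and the cap.} Each $\varphi_{\delta,b}$ is convex and composed with an affine map of $y$, so $Q_\delta$ is convex in $y$, and $\mathcal M_{R_y^2}$ preserves convexity since it is convex and nondecreasing. The term $-\norm y^2/(2n^2)$ then gives $n^{-2}$-strong concavity. For the cap, I will show $W-Q_\delta\ge R_y^2$ once $\norm y\ge2R_y$. The edge energies grow at most quadratically: $\varphi_{\delta,b}(t)\le t^2/2$. I will bound $\sum_{i,j} t_{i,j}^2$ by $O(\norm y^2)+O(T)$, using $|q|\le3$ and $C\le84$ from~\eqref{eq:Robinbounds}, with an operator norm of the incidence map of about $4$. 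This gives $Q_\delta\le c_1\norm y^2+c_2T$, which is dominated by $W=20(\norm y^2-R_y^2)$ because $R_y^2=10^6n^2T$ swamps $c_2T$. Outside the transition strip the cap is therefore exactly $W-R_y^2/2$.

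\textbf{Step 2: smoothness and the gradient bound.} The clip $\rho_2$ has bounded first and second derivatives, and by Lemma~\ref{lem:corebounds} the functions $\Psi,\Theta$ and their derivatives are bounded on $|q|\le3$. Hence $N(q)$ and $D_\delta$ have Hessians bounded by universal constants. For $Q_\delta$, the second derivatives of $\varphi_{\delta,b}$ are at most $1$, and the Jacobian of the edge arguments in $(u,y)$ has $O(1)$ norm: rows have bounded entries ($\sqrt C\le10$, $n^{-1/2}\le1$), and each column meets $O(1)$ rows, except that the sources act through the clip. So the Hessian of $Q_\delta$ is $O(1)$.

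For the rank-one term in~\eqref{eq:maxhess}, in the strip $\norm y\lesssim R_y$ I need $\norm{\nabla(W-Q_\delta)}^2/R_y^2=O(1)$. The $y$-gradient is $O(\norm y)$, so this is immediate. The $u$-gradient is bounded by $b\cdot O(\sqrt{T})$ thanks to the bounded slope $b$; squared it is $O(b^2T)$, and divided by $R_y^2$ it is tiny. This is exactly why the bounded slope is needed. The same slope bound yields $\norm{\nabla_uF_\delta}\le K\sqrt T$, since the cap's $u$-gradient is $(1-\chi')\nabla_uQ_\delta$.

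\textbf{Step 3: the value-function estimate~\eqref{eq:Avalueerror}, which is the main obstacle.} First remove the cap. At the maximizer of the uncapped flattened problem the dual point has norm $O(n\sqrt T)$: the sources are bounded and the endpoint Green entries are $O(n)$. The cap is then inactive there, so the capped and uncapped maxima coincide, as in Lemma~\ref{lem:Dregular}.

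Next, Danskin gives $\nabla J_\delta(u)=\nabla_uF_\delta(u,y_\delta^*(u))$. I will compare this with the $\delta=0$, unbounded-slope ideal problem. At the ideal optimizer the edges satisfy $|t|\lesssim|q|\cdot O(1)\le b$, so the slope saturation is inactive for both problems. The derivative discrepancy is at most $\delta$ per edge, which perturbs the dual gradient by $O(\delta\sqrt M)$. Strong monotonicity with modulus $n^{-2}$ then moves the optimizer by $O(n^2\delta\sqrt M)$, and the Lipschitz constant of $\nabla_u$ converts this into an $O(n^2\delta\sqrt M)=O(\eta\cdot10^{-8})$ gradient error. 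The primal $\varphi_\delta$ terms contribute a further $O(\delta\sqrt T)$. The choice~\eqref{eq:Adelta} makes the total at most $\eta/100$.

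The delicate point is that $n^{-2}$ is the strong concavity modulus while the edge Hessian has only $O(1)$ curvature. The displacement therefore must be measured with the crude $n^2$ factor, and that factor is exactly what $\delta$ absorbs.

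\textbf{Step 4: remaining facts.} At $u=0$ every edge argument vanishes at $y=0$, so all flat terms and their gradients vanish and $y=0$ is the maximizer, giving $J_\delta(0)=N(0)=-\Theta(0)$. For the lower bound, take $y=0$: then $J_\delta(u)\ge D_\delta(u)-Q_\delta(u,0)\ge -|N|-O(T)\ge-1000T$, using $|q|\le3$ and the bounds of Lemma~\ref{lem:corebounds}.
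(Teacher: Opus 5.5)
Your proposal follows the paper's proof of this lemma step for step. The steps are: blockwise Hessian bounds, with the bounded slope $b$ controlling the rank-one cap term and $\nabla_u F_\delta$; Danskin plus strong monotonicity with modulus $n^{-2}$, where the $n^2$ loss is absorbed by the choice of $\delta$ in~\eqref{eq:Adelta}; inactivity of the cap at an optimizer of norm $O(n\sqrt T)$; and direct evaluation at $u=0$ and at $y=0$.

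The one step you assert without support is that every edge argument at the ideal optimizer satisfies $|t|\le b$ uniformly in $n$. This is exactly where uniformity in $n$ matters. The Green-function entry bound you cite only gives $|y_j^*|=O(\sqrt n)$. That yields $O(\sqrt n)$ for the interior differences $y_j^*-y_{j+1}^*$, which is not below the fixed $b=1000$ once $n$ is large. The paper closes this with the boundary equation and the interior recurrence, obtaining $|y_j^*-y_{j+1}^*|\le 200/\sqrt n$.

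A shorter fix is to compare the optimal dual value of one subchain with its value at $y=0$. This gives $\norm{y^*}^2/n^2+\sum_j t_j^2\le C(q_x^2+q_z^2/4)\le 945$. It bounds every edge by $31<b$ and also gives $\norm{y^*}\le 31n\sqrt T<R_y/2$.

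The same comparison, applied directly to the uncapped flattened problem, justifies removing the cap before the $\delta$-comparison. Your ordering currently takes that for granted, since the Green-function bound applies only to the ideal quadratic optimizer.
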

\begin{proof}
All edge energies are convex in $y$, as is the smooth maximum, so strong
concavity follows from the separate isotropic quadratic.
Write the vector of edge arguments as $A_yV^\top y+A_xq$.
Its matrices satisfy $\norm{A_y}^2\le5$ and $\norm{A_x}^2\le84$.
Each primal coordinate occurs in just one edge. The derivative bounds for the
clip and the bounded edge slope therefore give, uniformly over $y$,
\[
 \norm{(Q_\delta)_u}\le14000\sqrt T,\quad
 \norm{(Q_\delta)_{uu}}\le10^4,\quad
 \norm{(Q_\delta)_{uy}}\le21,\quad
 0\preceq(Q_\delta)_{yy}\preceq5I.
\]
Also $0\le Q_\delta\le2000T+5\norm y^2$. The cap equals $Q_\delta$ on
$\norm y\le R_y$, and equals $W-R_y^2/2$ on $\norm y\ge2R_y$.
Its transition strip has $\norm y<2R_y$; there
$\norm{\nabla_y(W-Q_\delta)}\le100R_y$.
Formula~\eqref{eq:maxhess} bounds the added $uu,uy,yy$ blocks by
$196/n^2$, $1400/n$, and $10000$, respectively. Adding $D_\delta$, whose
component derivative bounds follow directly from the fixed scalar functions,
and the isotropic quadratic proves the asserted loose constant $K$.
It also proves the bound on the primal partial gradient.

For the value comparison, first take $\delta=0$ without the radial cap.
The unmodified quadratic optimizer of one dual subchain is
\[
 y^*=\sqrt{C/n}\,H_n^{-1}(q_xe_1-q_ze_n/2).
\]
The endpoint inverse bound and the maximum principle imply
$\max_j|y_j^*|\le42\sqrt n$ and hence $\norm{y^*_{\rm all}}\le42n\sqrt T$.
The boundary equation and the interior recurrence show that
$|y_j^*-y_{j+1}^*|\le200/\sqrt n$; explicitly the first difference is at most
$[3\sqrt C+42(1+1/n)]/\sqrt n$, and all the subsequent increments together
are at most $42/\sqrt n$.
The two endpoint edge arguments have magnitude at most $70$.
Thus every edge is in the quadratic part of $\varphi_{0,b}$ at this optimizer.
Strong convexity in the dual minimization problem proves that it is still the
unique global optimizer for the Huberized problem. Its value is~\eqref{eq:Aclippedideal}.

Replacing $\varphi_{0,b}$ by $\varphi_{\delta,b}$ perturbs the dual gradient
by at most $3\delta\sqrt M$. Strong monotonicity moves its optimizer by at most
$3n^2\delta\sqrt M$. The direct primal gradient perturbation, including the
primal-only flattened quadratics, is at most $100\delta\sqrt M$.
The mixed Lipschitz bound above therefore gives
\[
 \norm{\nabla J_\delta-\nabla J}\le200n^2\delta\sqrt M<\eta/100
\]
for the uncapped problem. Both relevant optimizers have norm less than $R_y/2$.
The cap only decreases the objective and is inactive there, so their values
and primal derivatives are unchanged by capping.
At $y=0$, the positive primal-only terms and the bounded nonlinear terms give
$F_\delta(u,0)\ge-1000T$, proving the value lower bound.
At $u=y=0$, all edge derivatives and all flattened quadratic terms vanish.
The surviving seed has value $-\Theta(0)$ and zero dual derivative.
Strong concavity therefore gives the stated maximizer and value at the origin.
\end{proof}

\subsection{Primal protection and the stationarity barrier}
Let $P=2T-1$, let $U\in\St(d_x,P)$ be independent of $V$, and set
\[
 R_x=C_x\sqrt T,\qquad C_x=10^4K/\eta,\qquad
 u(w)=U^\top\mathcal R_{R_x}(w).
\]
Define the normalized objective
\begin{equation}\label{eq:Anormalized}
 \mathfrak f_{U,V}(w,y)=F_\delta(u(w),y)+\frac\eta2\norm w^2.
\end{equation}
It is $L_A$-smooth for $L_A=10^{10}$, is $n^{-2}$-strongly concave in $y$,
and its initial primal gap is at most $1000T$. Indeed the compression Jacobian
has norm at most one and its second derivative has norm at most $3/R_x$;
Lemma~\ref{lem:Acore} bounds the extra Hessian term by $3K/C_x$.

\begin{lemma}[Robust end game]\label{lem:Abarrier}
If the compressed coordinate $|u_{z_T}(w)|\le1/4$, then
\[
 \norm{\nabla\max_y\mathfrak f_{U,V}(w,y)}\ge\eta/2.
\]
\end{lemma}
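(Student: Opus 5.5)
We follow the template of the barrier in Lemma~\ref{lem:Bbarrier}, with the smoothed value function $J_\delta$ replacing $F_T$ and \eqref{eq:Avalueerror} absorbing the flattening error. Write $\Phi(w)=\max_y\mathfrak f_{U,V}(w,y)=J_\delta(u(w))+\frac\eta2\norm w^2$. Set $P_0=R_x=C_x\sqrt T$ and $a=(1+\norm w^2/P_0^2)^{-1/2}$, so that $u=aU^\top w$. The chain rule then gives
\[
 \nabla\Phi(w)=aU\nabla J_\delta(u)-\frac{a^3}{P_0^2}\ip{U^\top w}{\nabla J_\delta(u)}\,w+\eta w .
\]
We split into the two cases $\norm w\le P_0/2$ and $\norm w>P_0/2$, exactly as in Lemma~\ref{lem:Bbarrier}.

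\emph{Large outputs.} For $\norm w>P_0/2$ we use the crude bound $\norm{\nabla J_\delta}\le\norm{\nabla_uF_\delta}\le K\sqrt T$, which follows from Lemma~\ref{lem:Acore} via the envelope theorem. This gives
\[
 \norm{\nabla\Phi}\ge\eta\norm w-2K\sqrt T>(\eta C_x/2-2K)\sqrt T,
\]
and the right-hand side is enormous because $C_x=10^4K/\eta$.

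\emph{Moderate outputs.} This case is the heart of the argument: we need a coordinate of $\nabla J(u)$ that is uniformly negative. Recall $J(u)=N(\rho_2(u))+6\sum_{i<T}(q_{x_i}-q_{z_{i+1}}/2)^2$. In the clipped variables, $N$ is the chain $F_T$ with coordinates ordered $x_1,z_2,x_2,\ldots$, where gates depend on $z_i$ and activations on $x_i$. We adapt the argument of Lemma~\ref{lem:corebounds} as follows. Take the first index $i$ such that $|q_{x_i}|<1$, with the convention $q_{z_1}=1$.
\begin{itemize}
\item If the gate $\Psi(\pm q_{z_i})$ is at least $1$, then the incoming term gives $\partial_{x_i}N\le -c_0$.
\item Otherwise $|q_{z_i}|<1$. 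The quadratic coupling $6(q_{x_{i-1}}-q_{z_i}/2)^2$, with $|q_{x_{i-1}}|\ge1$, then forces a definite sign on $\partial_{z_i}J$.
\end{itemize}
The second case is the step we expect to be delicate. The coupling is designed so that $z_{i+1}$ is "pulled" toward $2x_i$. So if $x_i$ is large while $z_{i+1}$ is small, the coupling term yields a gradient of size $\gtrsim 6(1-1/2)=3$ in the $z_{i+1}$ coordinate. This term is not signed by the nonpositive partials of $N$, and we have to check that it cannot be cancelled.

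Our plan for this check is to select, along the interleaved sequence $x_1,z_2,x_2,\ldots,z_T$, the first coordinate with clipped magnitude below $1/2$ (or below $1$). Such a coordinate exists because $|u_{z_T}|\le1/4$. We then show that either the $N$-part contributes at most $-c_0$ or the coupling contributes magnitude at least $1$, in the same direction as the $N$-part. The point is that $\partial N$ is nonpositive and the coupling term at a small coordinate preceded by a large positive one is also negative. A large negative predecessor requires a sign-symmetric argument: $\Psi(-q)$ handles the activation, and for the coupling we take absolute values and project onto the appropriate signed column. In every case we aim to obtain $|\partial_jJ(u)|\ge c_0$ at a coordinate $j$ with $|u_j|\le1$, lying in the linear region of $\rho_2$, so that $\rho_2'=1$.

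With such a $j$ in hand, \eqref{eq:Avalueerror} gives $|\partial_jJ_\delta|\ge c_0-\eta/100$. We then project $\nabla\Phi$ onto the $j$-th column of $U$. Using $a\ge2/\sqrt5$, the second term of the gradient is at most $K\sqrt T\,\norm u/P_0^2\le K/C_x$ in magnitude, and the ridge term contributes at most $\eta P_0\cdot$ (its component along $U_j$). That component is $\eta\,u_j/a\le\eta\sqrt5/2$, which is small relative to $c_0$ since $\eta=c_0/100$. The projection therefore has magnitude at least $c_0/2\ge\eta/2$, which proves the lemma. The main obstacle remains the combinatorial case analysis for the first small coordinate in the presence of the quadratic coupling; we would first try to reduce it to showing that every coupling contribution at that coordinate has the same sign as $\partial N$.
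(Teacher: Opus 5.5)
Your large-output case and your bookkeeping for a single column of $U$ are fine and match the paper: the ridge contributes at most $\eta\sqrt5/2$ when $|u_j|\le1$, the curvature term is $O(K/C_x)$, and \eqref{eq:Avalueerror} costs $\eta/100$.

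The gap is in the moderate case. Your target there, a coordinate $j$ with $|u_j|\le1$ and $|\partial_jJ(u)|\ge c_0$, is false. Your proposed reduction, that the coupling at the selected coordinate has the same sign as $\partial N$, also fails in both of your cases.
\begin{itemize}
\item If the first small coordinate is $x_i$, its coupling partner is the successor $z_{i+1}$. The term $12(q_{x_i}-q_{z_{i+1}}/2)$ is then uncontrolled and can cancel $\partial_{x_i}N\le-c_0$.
\item If it is $z_i$ with $q_{x_{i-1}}\le-1$, the coupling part $-6(q_{x_{i-1}}-q_{z_i}/2)$ is at least $3$. Meanwhile $\partial_{z_i}N=-\Psi'(-q_{z_i})\Theta(-q_{x_i})-\Psi'(q_{z_i})\Theta(q_{x_i})\le0$ regardless of signs and can reach roughly $-11$. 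No signed column repairs this.
\end{itemize}
Here is a concrete counterexample with $T=3$. Take $u_{z_3}=u_{x_3}=0$ (so the hypothesis holds), $u_{z_2}=-0.9$, $u_{x_2}\approx0.047$ solving $12u_{x_2}=\Psi(0.9)e^{-u_{x_2}^2/2}$, and $u_{x_1}\approx-1.34$ chosen so that $\partial_{z_2}J=0$. All coordinates lie in the linear region of $\rho_2$. Then $\partial_{z_2}J=\partial_{x_2}J=\partial_{x_3}J=0$ and $|\partial_{z_3}J|=6u_{x_2}\approx0.29<c_0$. The only large partial, $\partial_{x_1}J\approx-11$, sits at $|u_{x_1}|>1$, which your plan discards.

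Two ideas are missing.

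First, select the first gate $z_j$ with $|u_{z_j}|<1$. It exists because $|u_{z_T}|\le1/4$, whereas a first small $x_i$ need not exist. Put $v=u_{x_{j-1}}$ and $z=u_{z_j}$. The gate $z_{j-1}$ (or the seed when $j=2$) is then open, so $p=\partial_{x_{j-1}}N\le-c_0$ whenever $|v|<1$. In that case do not use a single column. Project instead onto the column of $U$ for $x_{j-1}$ plus twice the column for $z_j$. The coupling contributes $12(\rho_2(v)-z/2)$ to $\partial_vJ$ and $-6(\rho_2(v)-z/2)$ to $\partial_zJ$, and these cancel exactly in $\partial_vJ+2\partial_zJ$. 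What remains is $p+2\partial_zN\le-c_0$, and the ridge adds at most $3b_0$.

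Second, if $|v|\ge1$, project onto the large column $x_{j-1}$ itself. Since $|z|<1$, the bracket $p+12(\rho_2(v)-z/2)$ exceeds $3$ when $v\ge1$ and is below $-6$ when $v\le-1$. Because $\rho_2'\ge0$, the $J$-part never opposes the ridge term $b_0v$, and the projection has magnitude at least $b_0|v|-a\eta/100>\eta/2$. This is why the lemma concludes $\eta/2$ rather than a multiple of $c_0$: for $|v|\ge4$, $\rho_2'(v)=0$, and the ridge is the only gradient along that column.
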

\begin{proof}
Write $a=(1+\norm w^2/R_x^2)^{-1/2}$. The primal gradient is
\[
 aU\nabla J_\delta(u)-\frac{a^2w}{R_x^2}
                    \ip{u}{\nabla J_\delta(u)}+\eta w.
\]
For $\norm w>R_x/2$ its norm is at least
$\eta\norm w-K\sqrt T>\eta/2$.
Otherwise $a\ge2/\sqrt5$. Its projection onto any column of $U$ is
\[
 a\partial_j J_\delta(u)+b_0u_j,\qquad
 b_0=\eta/a-a\ip{u}{\nabla J_\delta(u)}/R_x^2,
\]
where $0.99\eta\le b_0\le1.13\eta$.
Let $j\ge2$ be the first index with $|u_{z_j}|<1$, with $z_1=1$ by convention.
Then $|q_{z_{j-1}}|\ge1$ (or this is the deterministic seed).
Put $v=u_{x_{j-1}}$ and $z=u_{z_j}$. In the ideal clipped function, the two
relevant derivatives are
\[
 \partial_vJ=\rho_2'(v)[p+12(\rho_2(v)-z/2)],\qquad
 \partial_zJ=q'-6(\rho_2(v)-z/2),
\]
where $-3<p\le0$, $q'\le0$, and $p\le-c_0$ when $|v|<1$.
If $|v|<1$, the combination $\partial_vJ+2\partial_zJ\le-c_0$.
The corresponding combination of actual gradient projections is at most
$-ac_0+3b_0+a\sqrt5\,\eta/100<-c_0/2$.
If $v\ge1$, the bracket in the first derivative is at least $3$; if $v\le-1$
it is at most $-6$. Since $\rho_2'\ge0$, this derivative cannot oppose
the ridge contribution $b_0v$. Consequently the magnitude of its actual projection is at least
$b_0|v|-a\eta/100>\eta/2$.
This covers every case, including the clip transition and saturation regions.
\end{proof}

\subsection{Exact adaptive lifting and parameter restoration}
In physical coordinates define
\[
 f(x,y)=\lambda\mathfrak f_{U,V}(x/r,y/r),\qquad
 r=\frac{8L_A\epsilon}{\eta L},\quad
 \lambda=\frac{32L_A\epsilon^2}{\eta^2L},\quad
 n=\left\lfloor\sqrt{\frac\kappa{4L_A}}\right\rfloor,\quad
 T=\left\lfloor\frac\Delta{1000\lambda}\right\rfloor.
\]
For sufficiently large universal $\kappa_0,C_0$, these choices have $n\ge10,T\ge2$.
Joint smoothness is at most $L/2$, dual strong concavity is at least $2\mu$,
the gap is at most $\Delta$, and $G_0=0$.
Set $Q=n(T-1)$, $S=(n+2)(T-1)$, and
\[
 \tau=\delta/[8(1+\sqrt C)].
\]
For a horizon $N\le(S-2)/2$, explicit finite dimensions are
\begin{align}
 d_x&=P+3+\left\lceil\frac{2R_x^2}{\tau^2}
                \log(48(P+Q)(N+1))\right\rceil,\nonumber\\
 d_y&=Q+3+\left\lceil\frac{8R_y^2}{\tau^2}
                \log(48(P+Q)(N+1))\right\rceil.\label{eq:Adim}
\end{align}

\begin{theorem}[Accelerated unrestricted exact-oracle lower bound]\label{thm:Anew}
For universal $c,C_0,\kappa_0>0$, if $\kappa\ge\kappa_0$ and
$L\Delta\ge C_0\epsilon^2$, then
\[
 \Tloc(L,\mu,\Delta,0,0,\epsilon)
 \ge c\frac{\sqrt\kappa L\Delta}{\epsilon^2}.
\]
The same lower bound holds for every $G,\sigma\ge0$.
\end{theorem}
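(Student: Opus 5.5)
We follow the template of Theorem~\ref{thm:B}, with exact gradients and no masking coin. The work splits into three steps: verify admissibility of the physical instance, build a deterministic auxiliary interaction whose replies match the true ones on a good event, and count exposed vertices.

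\emph{Step 1: admissibility.} We check that $f(x,y)=\lambda\mathfrak f_{U,V}(x/r,y/r)$ lies in $\cF(L,\mu,\Delta,0,0)$.
\begin{itemize}
\item Smoothness scales as $\lambda L_A/r^2\le L/2$.
\item Dual strong concavity scales as $\lambda n^{-2}/r^2\ge2\mu$ by the choice of $n$.
\item The gap is at most $1000T\lambda\le\Delta$.
\item $G_0=0$ follows from Lemma~\ref{lem:Acore}.
\end{itemize}
By Lemma~\ref{lem:Abarrier}, $\|\nabla\Phi(\widehat x)\|\ge\lambda\eta/(2r)=2\epsilon$ whenever $|u_{z_T}(\widehat x/r)|\le1/4$. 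So success forces the output to reach the last primal coordinate.

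\emph{Step 2: the auxiliary interaction.} Order the $S=(n+2)(T-1)$ hidden vertices along the interleaved chain: primal $x_i$, then the $n$ dual vertices of $y^{(i)}$, then $z_{i+1}$. Reveal $x_1$ for free.
\begin{itemize}
\item Before each query, expose at most the next two columns in this order, allowing one for safety at primal--dual junctions.
\item Compute the reply from $F_\delta$ with every unexposed coordinate set to zero. Keep the full ambient norms in the ridge, the compression $\mathcal R_{R_x}$, and the cap.
\end{itemize}
The good event requires every unexposed primal projection $|\ip{u_j}{\mathcal R_{R_x}(x/r)}|$ and, when $\|y/r\|\le2R_y$, every unexposed dual projection to be at most $\tau$.

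On this event we argue that the replies coincide:
\begin{itemize}
\item Every edge $t_{i,j}$ with both endpoints hidden has $|t|\le(1+\sqrt C)\cdot2\tau\cdot$const $\le\delta$, so it is flat. Here $\rho_2$ is $1$-Lipschitz, and the $1/\sqrt n$ factors only help.
\item Primal-only terms $\varphi_\delta$ in hidden coordinates are flat.
\item The gates $\Psi(\pm q_{z_i})$ vanish when $|q_{z_i}|\le\tau<1/2$, and their derivatives vanish there as well.
\item Edges from the exposed frontier into hidden vertices are flat, because only the frontier coordinate can be large and the hidden one is tiny. The exposure rule is chosen so that any nonflat edge has both endpoints exposed; the factor-of-two exposure handles the $\sqrt C q$ endpoint edges.
\item For $\|y/r\|\ge2R_y$ the cap is radial and frame-independent (Lemma~\ref{lem:Acore}), so only the primal good projections are needed.
\end{itemize}
Hence the true and auxiliary replies are equal.

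We append $\widehat x$ as a dummy query. Lemma~\ref{lem:ghost} and Lemma~\ref{lem:sphere} then apply, with norms $R_x$ and $2R_y$ and threshold $\tau$, over at most $(P+Q)(N+1)$ projections. The dimensions \eqref{eq:Adim} make the total violation probability at most $1/12$; this is computed under the unstopped auxiliary law.

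\emph{Step 3: counting.} After $N\le(S-2)/2$ queries plus the dummy, at most $2N+1<S$ vertices are exposed, so the last primal column $z_T$ is unexposed. On the good event the output then has $|u_{z_T}|\le\tau\le1/4$, and Step 1 shows it fails. The success probability is therefore at most $1/12<2/3$.

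Since $S\asymp nT\asymp\sqrt\kappa\cdot L\Delta\eta^2/(L_A\epsilon^2)$, we get $\Tloc\ge c\sqrt\kappa L\Delta/\epsilon^2$. Monotonicity of the class in $G$ and $\sigma$ (the exact oracle with $G_0=0$ is admissible for every budget) gives the final sentence.

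The main obstacle is the flatness bookkeeping in Step 2. One must verify that the endpoint edges $t_{i,0}$ and $t_{i,n}$, which mix a clipped primal coordinate with a scaled dual coordinate, together with the gated nonlinear terms, never produce a nonzero contribution in a hidden direction. If that check fails, the exposure rule has to be adjusted, for example by revealing the adjacent primal column together with the dual subchain endpoint; this changes $S$ only by a constant factor.
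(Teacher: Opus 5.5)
Your architecture matches the paper's: the same instance and admissibility check, an auxiliary product-Haar interaction that zeroes unexposed coordinates and keeps full ambient norms in the ridge, compression and cap, the output appended as a dummy query, the union bound via \eqref{eq:Adim}, and the count showing that $z_T$ stays hidden. There is, however, a genuine gap in Step 2, where you yourself located the obstacle.

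\textbf{The frontier-edge claim is false.} You assert that edges from the exposed frontier into hidden vertices are flat ``because only the frontier coordinate can be large and the hidden one is tiny''. Two counterexamples:
\begin{itemize}
\item Suppose $q_{x_i}$ is large while $y^{(i)}_1$ is hidden. Then $|t_{i,0}|>\delta$, and at a small dual query the true gradient has a nonzero component, proportional to $\varphi_{\delta,b}'(t_{i,0})/\sqrt n$, along the hidden column of $y^{(i)}_1$.
\item Suppose $z_i$ is exposed with $|q_{z_i}|>1/2$ while $x_i$ is hidden. Then the gate $\Psi(\pm q_{z_i})\Theta(\pm q_{x_i})$ has $x_i$-derivative of size about $\Psi(\pm q_{z_i})\Theta'(0)\neq0$. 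Your gate bullet only treats hidden $z_i$.
\end{itemize}
These are exactly the channels through which the true oracle reveals the next direction. A reply computed with hidden coordinates set to zero cannot reproduce them.

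As you wrote it, columns are exposed before each query and the good event covers only columns still unexposed. The algorithm then knows the last exposed column and can make that coordinate large, which breaks the coupling. Exposing two columns per call, or any fixed number, does not repair this.

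\textbf{The missing idea is exposure timing.} The vertex exposed at call $t$ must be revealed after $z_t$ is chosen. The good event must be imposed on every column unexposed before the query, including the one about to be exposed. This is legitimate because, by Lemma~\ref{lem:ghost}, that column is still uniform on the orthogonal complement given everything the query depends on. With this convention:
\begin{itemize}
\item The newly exposed vertex is itself tiny, so its edge or gate to the next hidden vertex is flat.
\item The only possibly active link leaving the previously exposed set joins two vertices that are both exposed at reply time, so it is computed exactly.
\end{itemize}
Every coupling term ($t_{i,0}$, $t_{i,j}$, $t_{i,n}$ and the $(z_i,x_i)$ gate) joins consecutive vertices in your order. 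Hence one exposure per call suffices, which gives the paper's count $1+N<S$. The $\sqrt C$ scaling of the endpoint edges is handled by the choice $\tau=\delta/[8(1+\sqrt C)]$, as in your first bullet, not by extra exposure.
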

\begin{proof}
Use exact joint gradients and sample $U,V$ independently and uniformly before
interaction. The vertex order is
\[
 x_1,y^{(1)}_1,\ldots,y^{(1)}_n,z_2,x_2,
 y^{(2)}_1,\ldots,y^{(2)}_n,z_3,x_3,\ldots,z_T,x_T.
\]
An auxiliary interaction gives the first primal column for free, exposes one
additional vertex at each call, and evaluates the core with all still unexposed
coordinates set to zero. Compression derivatives and all radial terms use the
full query vectors. This defines a process on every trajectory.
At a small dual query $\norm{y/r}\le2R_y$, impose that every pre-query unexposed
primal compressed projection and dual projection has absolute value at most $\tau$.
Every edge with two such endpoints is exactly flat, as is every hidden
primal-only quadratic. Every nonlinear outgoing gate at a hidden $z_i$ is also
flat. After exposing the next vertex, all boundary edges are computed exactly;
edges from that newly exposed vertex to still hidden vertices remain flat.
Thus the auxiliary and true values and gradients agree exactly.
At a large dual query the cap is fully radial, and the same primal good event
suffices. All informative primal vectors have norm at most $R_x$.

Append the final output as a dummy primal query. Lemmas~\ref{lem:ghost}
and~\ref{lem:sphere}, with~\eqref{eq:Adim}, bound the probability of any
projection violation by $1/12$ under the unstopped auxiliary interaction.
At most $1+N<S$ vertices have been exposed; in particular $z_T$ remains hidden.
On the coupled good event its output projection is at most $\tau<1/4$.
Lemma~\ref{lem:Abarrier} and $\lambda/r=4\epsilon/\eta$ then give a true
primal gradient of magnitude at least $2\epsilon$.
Hence average success is at most $1/12$, even for an arbitrary independent
algorithm seed and arbitrary measurable, unbounded queries.
Finally $S=\Theta(\sqrt\kappa L\Delta/\epsilon^2)$. Rounding is absorbed by
increasing the universal nontrivial-regime constants. The exact oracle and
$G_0=0$ place this family in every larger noise/localization budget.
\end{proof}

\section{A Fresh-Gradient Solver with Additive Initialization Cost}\label{sec:uppernew}

This section constructs and analyzes the fresh-gradient SCSC solver used by the outer method. All sample
complexities in the intermediate lemmas are expectations of the number of
\emph{original} one-point calls. Section~\ref{sec:outernew} converts the entire
algorithm, rather than each inner call separately, to a fixed query budget.
No samplewise smoothness, sample reuse, or gradient-difference interface is used.

\subsection{A finite multilevel oracle with relative bias}
Let $g(u,v)$ be jointly $\ell$-smooth, $\alpha$-strongly convex in $u$, and
$\mu$-strongly concave in $v$, with $\alpha\ge\ell/4$. Write
\[
 u^*(v)=\arg\min_u g(u,v),\qquad q(v)=-\min_u g(u,v).
\]
Strong monotonicity in $u$ and the two first-order optimality equations imply
$\alpha\norm{u^*(v)-u^*(v')}\le\ell\norm{v-v'}$.
Thus $u^*$ is $k$-Lipschitz for $k\le4$. The function $q$ is $\mu$-strongly
convex as the pointwise maximum of the functions $-g(u,\cdot)$.
Danskin's identity gives $\nabla q(v)=-\nabla_v g(u^*(v),v)$, whose Lipschitz
constant is at most $\ell+\ell k\le H_0=5\ell$. We will also use
$Q(v)=q(v)+r(v)$, where $r$ is a known convex quadratic. Suppose $Q$ is
$m$-strongly convex and $H$-smooth, with
\begin{equation}\label{eq:valuerange}
 0<m\le H\le20\ell.
\end{equation}
The added quadratic is evaluated exactly; it does not change $u^*(v)$.
Let $h=17$ and fix $\rho=(10^{10}h^2)^{-1}$ throughout this section.

\begin{lemma}[Relative-error effective gradient]\label{lem:reloracle}
Fix a reference $\bar u$, a target value accuracy $e>0$, and parameters $m,H$ as
above. There is a finite fresh-sample estimator $Y(v)$ of $\nabla Q(v)$ with
\begin{align}
 \norm{\E[Y(v)]-\nabla Q(v)}^2
 &\le\rho m e+\rho m^2\norm{\bar u-u^*(v)}^2,\label{eq:relbias}\\
 \E\norm{Y(v)-\E Y(v)}^2
 &\le V+\rho m^2\norm{\bar u-u^*(v)}^2,\label{eq:relvariance}
\end{align}
where $V\le C(K+1)\sigma^2$ and
$K=O(1+\log(2+\sigma^2/(me)))$. Its expected number of original calls is at most
\begin{equation}\label{eq:levelcost}
 C(K+1)\big[1+K+\log(K+1)+\log(2+\ell/m)\big].
\end{equation}
These assertions hold conditionally on arbitrary previous information fixing
$g,r,\bar u,v$. Neither $K$ nor the cost bound depends on the distance of $\bar u$
from $u^*(v)$. At $\sigma=0$ the estimator is deterministic and has $K=0$.
\end{lemma}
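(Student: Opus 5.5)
The plan is to build $Y(v)$ as $\nabla r(v)-\widehat{G}_v$, where $\widehat G_v$ estimates $\nabla_v g(u^*(v),v)$. The bias will be controlled by running an inner stochastic method on the strongly convex map $u\mapsto g(u,v)$ at several batch levels, and the variance by a randomized telescoping (multilevel Monte Carlo) correction that is truncated at a finite level $K$. Since $r$ is known exactly, it contributes nothing to bias or variance, so the whole task is estimating $\nabla_v g(u^*(v),v)$. Because $g(\cdot,v)$ is $\alpha$-strongly convex and $\ell$-smooth with $\ell/\alpha\le4$, its condition number is a universal constant. Hence plain SGD started at $\bar u$, with a constant number of deterministic-contraction steps per halving, gives iterates $u_j$ obeying
\[
\E\|u_j-u^*(v)\|^2\le 2^{-j}\|\bar u-u^*(v)\|^2+C\sigma^2 2^{-j}/\ell^2
\]
at level $j$ for cost $O(2^j)$. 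I would get this from a standard SGD recursion with stepsize $\Theta(1/\ell)$ and batch or averaging size $2^j$, together with a $\log(2+\ell/m)$ burn-in of deterministic steps that makes the contraction factor $\rho m^2/\ell^2$ small enough.

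Next comes the estimator. Let $\Gamma_j$ denote one fresh $v$-gradient sample at $(u_j,v)$. Set
\[
\widehat G_v=\Gamma_{0}+\sum_{j=1}^{K}\frac{\mathbf 1\{J\ge j\}}{\Pp(J\ge j)}(\Gamma_j-\Gamma_{j-1}),
\]
with $J$ drawn geometrically so that $\Pp(J\ge j)\propto 2^{-j}$, or simply deterministic inclusion of all levels up to $K$ with per-level batch sizes. The coupled levels $u_j,u_{j-1}$ come from one SGD run, so each difference only needs fresh one-point gradients at two different points; no same-sample evaluation is required. Its mean is $\E\nabla_v g(u_K,v)$, so Jensen and $\ell$-Lipschitzness give
\[
\|\text{bias}\|^2\le \ell^2\E\|u_K-u^*\|^2\le \ell^2 2^{-K}\|\bar u-u^*\|^2+C\sigma^2 2^{-K}.
\]
Two choices then give \eqref{eq:relbias}. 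First, take $K=\lceil\log_2(C\sigma^2/(\rho m e))\rceil_+$, so that $C\sigma^2 2^{-K}\le\rho me$; this gives $K=O(1+\log(2+\sigma^2/(me)))$ with $\rho$ fixed. Second, absorb the factor $\ell^2/m^2$ into the burn-in, so that $\ell^2 2^{-K}\cdot(\text{burn-in contraction})\le\rho m^2$. This is exactly where $\log(2+\ell/m)$ enters the cost, and it is independent of $\|\bar u-u^*\|$.

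For the variance, each term $\Gamma_j-\Gamma_{j-1}$ has second moment at most $2\sigma^2+\ell^2\E\|u_j-u_{j-1}\|^2$. Dividing by $\Pp(J\ge j)$ and summing over $j$ gives at most $(K+1)$ times $O(\sigma^2)$ plus a distance term. The distance term is again multiplied by the burn-in contraction and so becomes $\le\rho m^2\|\bar u-u^*\|^2$. The expected cost is $\sum_j\Pp(J\ge j)\,O(2^j)$ plus the burn-in, which is $O(K+1)$ times per-level overheads; the $\log(K+1)$ factor comes from the slight sharpening of the level probabilities needed to keep the variance sum finite. This reproduces \eqref{eq:levelcost}. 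Every statement is conditional on the past information fixing $g,r,\bar u,v$, because all samples used are fresh. At $\sigma=0$, take $K=0$ and run the burn-in deterministically.

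The main obstacle is balancing the level probabilities so that both the variance and the expected cost pick up only polylogarithmic factors while the distance term stays relative, with coefficient $\rho m^2$ and not $\ell^2$. If the geometric weighting fails to give this, my fallback is to drop the randomization and use deterministic batch sizes $2^{K-j}$ per level. Since $K$ is already only logarithmic, this still meets a $(K+1)\cdot\mathrm{polylog}$ cost budget.
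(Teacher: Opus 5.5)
Your bias argument is fine, but the variance step fails, and the lemma fails with it. You take $\Gamma_j$ to be a \emph{single} fresh $v$-gradient sample at $(u_j,v)$. Coupling $u_j$ and $u_{j-1}$ through one SGD run makes $\ell^2\E\|u_j-u_{j-1}\|^2$ small. It does not help with the two oracle noises in $\Gamma_j-\Gamma_{j-1}$: they are independent and never cancel, which is exactly what ruling out same-sample evaluation means. For an oracle whose variance is actually $\sigma^2$, you get $\E\|\Gamma_j-\Gamma_{j-1}\|^2\ge 2\sigma^2$ at every level. In your estimator, on the event $J=j$ the noise of $\Gamma_j$ is multiplied by $1/\Pp(J\ge j)$, so it contributes about $\sigma^2/\Pp(J\ge j)\asymp 2^j\sigma^2$. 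Summing over $j\le K$ gives variance $\Theta(2^K\sigma^2)=\Theta(\sigma^4/(\rho m e))$, not $C(K+1)\sigma^2$. Your step of dividing by $\Pp(J\ge j)$ and summing to get $(K+1)\cdot O(\sigma^2)$ is valid only if the level-$j$ second moment decays like $2^{-j}$, and single samples do not.

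The missing idea is to make each level estimate itself a $2^j$-sample object. The paper runs batch gradient descent on $g(\cdot,v)$ from $\bar u$ with $2^j$ fresh samples per step, for $s_j=O(j+\log(1+\ell^2/(\gamma m^2)))$ steps. It then averages $2^j$ fresh dual gradients at the final point. This gives $\E\|H_j(v)-\nabla Q(v)\|^2\le 2^{-j}A$, with $A=\gamma m^2D^2+C_v\sigma^2$, at cost $(s_j+1)2^j$. No coupling between levels is then needed. With independent copies, $\E\|H_j-H'_{j-1}\|^2\le 6\cdot 2^{-j}A$, and the single-term estimator $Y=H_0+(H_J-H'_{J-1})/p_J$ with $p_j\propto 2^{-j}$ has second moment at most $14(K+1)A$. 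Its mean telescopes to $\E H_K$, and its expected cost $\sum_j p_j\,O(s_j 2^j)$ is the polylogarithmic bound \eqref{eq:levelcost}. The $\log(K+1)$ in the cost comes from choosing $\gamma=\rho/(14(K+1))$ to absorb the $(K+1)$ factor on the distance term, not from sharpening the level probabilities. Your fallback does not repair the gap. A scheme that computes level $K$ on every call costs at least $2^K\ge C\sigma^2/(\rho m e)$ calls per estimator, which is polynomial rather than polylogarithmic in $\sigma^2/(me)$; $K$ being logarithmic does not make $2^K$ small. That violates \eqref{eq:levelcost}. Downstream, it would also multiply the statistical term by the number of accelerated steps in Lemma~\ref{lem:valuesolver}.
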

\begin{proof}
For a level $j$, run batch gradient descent on $g(\cdot,v)$ from $\bar u$ with
step $1/\ell$, using $2^j$ fresh samples at each step. If $U$ is the current
point, conditional unbiasedness and the contraction of the exact gradient step
imply
\[
 \E[\norm{U^+-u^*(v)}^2\mid U]
 \le(1-\alpha/\ell)\norm{U-u^*(v)}^2+\frac{\sigma^2}{\ell^2 2^j}.
\]
For clarity, the contraction follows by integrating the Hessian along the
segment to $u^*(v)$: the averaged Hessian has eigenvalues in $[\alpha,\ell]$,
so the squared contraction is at most $(1-\alpha/\ell)^2\le1-\alpha/\ell$.
The same assertion follows by smooth approximation when only a Lipschitz gradient
is assumed.

After $s_j$ steps, form $H_j(v)$ by taking a further independent average of $2^j$
negative dual gradient samples at $(U,v)$ and adding $\nabla r(v)$. Take $C_v=10$. For any $\gamma\in(0,1)$, choosing
\[
 s_j=\left\lceil\frac\ell\alpha
 \log\left[2^{j+2}\big(1+\ell^2/(\gamma m^2)\big)\right]\right\rceil
\]
gives, with $D=\norm{\bar u-u^*(v)}$,
\begin{equation}\label{eq:levelMSE}
 \E\norm{H_j(v)-\nabla Q(v)}^2
 \le2^{-j}\big(\gamma m^2D^2+C_v\sigma^2\big).
\end{equation}
Indeed the initial-distance contribution is at most
$2\ell^2(1-\alpha/\ell)^{s_j}D^2$, while the limiting mean-square primal error
is at most $\sigma^2/(\alpha\ell 2^j)$; the final batch adds at most
$\sigma^2/2^j$. Since $\ell/\alpha\le4$, the displayed choices give
exactly~\eqref{eq:levelMSE}. The cost is $(s_j+1)2^j$.

When $\sigma>0$, choose
\[
 K=\max\left\{1,\left\lceil\log_2\frac{C_v\sigma^2}{\rho m e}\right\rceil\right\},
 \qquad \gamma=\frac{\rho}{14(K+1)},\qquad
 p_j=\frac{2^{-j}}{1-2^{-K}},\quad 1\le j\le K.
\]
Choose an independent level $J$ with probabilities $p_j$, and put
\[
 Y=H_0+\frac{H_J-H_{J-1}'}{p_J}.
\]
Every appearance of a level estimator uses independent fresh samples, including
$H_0$ and $H_{J-1}'$ when $J=1$. Expectations telescope, so $\E Y=\E H_K$.
Writing $A=\gamma m^2D^2+C_v\sigma^2$, inequality~\eqref{eq:levelMSE} gives
\[
 \E\norm{Y-\nabla Q(v)}^2
 \le2A+2\sum_{j=1}^K\frac{6A2^{-j}}{p_j}
 \le14(K+1)A.
\]
Thus the variance bound holds with $V=14C_v(K+1)\sigma^2$.
The squared bias is at most $2^{-K}A$, proving~\eqref{eq:relbias}.
Summing $p_j$ times the costs of the two sampled levels proves~\eqref{eq:levelcost}.
When $\sigma=0$, take $K=0$, $\gamma=\rho/14$, and $Y=H_0$. Its variance is zero
and~\eqref{eq:levelMSE} proves the bias bound directly. In particular the exact
endpoint uses neither a singular level distribution nor a limiting argument.
\end{proof}

\subsection{An explicit robust accelerated recurrence}
\begin{lemma}[Acceleration with conditional bias and variance]\label{lem:agdbias}
Let $Q$ be $H$-smooth and $m$-strongly convex, and put $a=\sqrt{m/H}$.
Starting from $x_0=z_0$, perform
\begin{align*}
 y_t&=(x_t+a z_t)/(1+a),\\
 x_{t+1}&=y_t-g_t/H,\\
 z_{t+1}&=(1-a)z_t+a y_t-(a/m)g_t.
\end{align*}
Let $v^*=\arg\min Q$ and
$\mathcal P_t=Q(x_t)-Q(v^*)+(m/2)\norm{z_t-v^*}^2$.
If $g_t$ has conditional bias $b_t$ relative to $\nabla Q(y_t)$ and conditional
centered variance $V_t$, then
\begin{equation}\label{eq:AGDrobust}
 \E[\mathcal P_{t+1}\mid\mathcal F_t]
 \le(1-a/2)\mathcal P_t+V_t/H+(2a/m)\norm{b_t}^2.
\end{equation}
\end{lemma}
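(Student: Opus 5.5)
The plan is a one-step Lyapunov computation. First I would do the exact-gradient Nesterov argument for this three-sequence scheme, keeping track of every place the gradient error enters. Then I would absorb the bias cross terms into the slack between the contraction factors $1-a$ and $1-a/2$.

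Write $g_t=\nabla Q(y_t)+b_t+\xi_t$. Here $b_t$ is $\mathcal F_t$-measurable, $\E[\xi_t\mid\mathcal F_t]=0$ and $\E[\|\xi_t\|^2\mid\mathcal F_t]=V_t$. Abbreviate $\nabla=\nabla Q(y_t)$.

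\emph{Primal descent.} By $H$-smoothness,
\[
Q(x_{t+1})\le Q(y_t)-\tfrac1H\langle\nabla,g_t\rangle+\tfrac1{2H}\|g_t\|^2 .
\]
Taking conditional expectations, $\E\|g_t\|^2=\|\nabla+b_t\|^2+V_t$. This gives
\[
\E[Q(x_{t+1})\mid\mathcal F_t]\le Q(y_t)-\tfrac1{2H}\|\nabla\|^2+\tfrac1{2H}\|b_t\|^2+\tfrac{V_t}{2H}.
\]

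\emph{Distance part.} Put $w_t=(1-a)(z_t-v^*)+a(y_t-v^*)$, so that $z_{t+1}-v^*=w_t-(a/m)g_t$. Expanding $\tfrac m2\|z_{t+1}-v^*\|^2$, the noise cross term has zero conditional mean. The quadratic noise term contributes $\tfrac m2\cdot\tfrac{a^2}{m^2}V_t=V_t/(2H)$, since $a^2=m/H$. Together with the previous display this gives the total $V_t/H$.

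The remaining deterministic pieces are as follows.
\begin{itemize}
\item The term $-a\langle\nabla,w_t\rangle$.
\item The term $\tfrac{a^2}{2m}\|\nabla+b_t\|^2=\tfrac1{2H}\|\nabla+b_t\|^2$. Using $\|\nabla+b\|^2\le 2\|\nabla\|^2+2\|b\|^2$ would overshoot the $-\|\nabla\|^2/(2H)$ from descent. Instead I would expand it exactly. Its $\|\nabla\|^2/(2H)$ part then cancels against descent, leaving cross terms $\langle\nabla,b_t\rangle/H$ that are handled by Young against the remaining slack.
\item The bias cross term $-a\langle b_t,w_t\rangle$.
\end{itemize}

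\emph{Exact-gradient core.} Convexity gives $Q(x_t)\ge Q(y_t)+\langle\nabla,x_t-y_t\rangle$. Strong convexity gives $Q(v^*)\ge Q(y_t)+\langle\nabla,v^*-y_t\rangle+\tfrac m2\|y_t-v^*\|^2$. The coupling $y_t=(x_t+az_t)/(1+a)$ means $(1-a)(z_t-y_t)$ matches $(1-a)(y_t-x_t)/a$. So, exactly as in the textbook proof, $-a\langle\nabla,w_t\rangle$ combines with these two inequalities. Convexity of $\|\cdot\|^2$, via $\|w_t\|^2\le(1-a)\|z_t-v^*\|^2+a\|y_t-v^*\|^2$, yields $(1-a)\mathcal P_t$ plus a leftover negative term. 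I would deliberately keep a reserve: half of the strong-convexity term, $-\tfrac{am}{4}\|y_t-v^*\|^2$, together with $\tfrac a2\mathcal P_t$ obtained by contracting only to $1-a/2$.

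\emph{Bias absorption, the main obstacle.} By Young,
\[
a|\langle b_t,w_t\rangle|\le\tfrac{am}{8}\|w_t\|^2+\tfrac{2a}{m}\|b_t\|^2 .
\]
Next, $\tfrac{am}{8}\|w_t\|^2\le \tfrac{am}{8}\bigl[(1-a)\|z_t-v^*\|^2+a\|y_t-v^*\|^2\bigr]$. The $z$-part is at most $\tfrac a4\cdot\tfrac m2\|z_t-v^*\|^2\le\tfrac a2\mathcal P_t$, and the $y$-part is dominated by the reserved $\tfrac{am}{4}\|y_t-v^*\|^2$.

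The residual terms $\|b_t\|^2/(2H)$ and the $\langle\nabla,b_t\rangle/H$ cross terms are handled using $1/H=a^2/m\le a/m$. They fit within a total coefficient $2a/m$ on $\|b_t\|^2$, after borrowing a small part of the $-\|\nabla\|^2/(2H)$ term. The constants are the delicate step. If the split above is too tight, I would first try reallocating the Young weights, using $\tfrac{am}{16}$ on $\|w_t\|^2$. Only if that fails would I weaken the contraction constant while keeping the $(1-a/2)$ rate. Summing the pieces gives \eqref{eq:AGDrobust}.
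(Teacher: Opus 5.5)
\paragraph{Verdict: genuine gap in the bias absorption.}
Your bias bookkeeping does not close, because the reserves you plan to spend are already used.

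In the exact-gradient core, the strong-convexity term $-\frac{am}{2}\|y_t-v^*\|^2$ is consumed in full. It exactly cancels the $+\frac{am}{2}\|y_t-v^*\|^2$ that your convex-combination bound on $\frac m2\|w_t\|^2$ produces. So the core gives exactly $(1-a)\mathcal P_t$. The only genuine leftover, which appears if you expand $\|w_t\|^2$ exactly, is $-\frac{ma(1-a)}{2}\|z_t-y_t\|^2$. There is no reserve $-\frac{am}{4}\|y_t-v^*\|^2$.

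Likewise, the descent term $-\|\nabla Q(y_t)\|^2/(2H)$ is cancelled in full by the $+\|\nabla Q(y_t)\|^2/(2H)$ from the dual quadratic, so nothing is left to borrow.

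Your Young step with weight $\frac{am}{8}$ already spends the entire allowance $\frac{2a}{m}\|b_t\|^2$. The fallback weight $\frac{am}{16}$ makes it worse, raising the coefficient to $\frac{4a}{m}$. Two terms are then left unpaid. One is the residual $\|b_t\|^2/H$; it is $1/H$ in total, half from descent and half from the dual quadratic, not $1/(2H)$. The more serious one is the cross term $\frac1H\langle\nabla Q(y_t),b_t\rangle$. The observation $1/H\le a/m$ does not control it, because $\|\nabla Q(y_t)\|$ is not small and your plan gives no estimate of it in terms of $\mathcal P_t$. Since the statement fixes the constants $1-a/2$ and $2a/m$, retreating to weaker constants would not prove it.

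\paragraph{The missing idea: keep the two bias cross terms together.}
Since $1/H=a\cdot(a/m)$,
\[
 \frac1H\langle\nabla Q(y_t),b_t\rangle-a\langle w_t,b_t\rangle
 =-a\langle z^0-v^*,b_t\rangle,
 \qquad
 z^0=(1-a)z_t+ay_t-\frac am\nabla Q(y_t),
\]
where $z^0$ is the dual iterate an exact gradient would produce.

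Your own core computation is equivalent to
$Q(y_t)-Q(v^*)-\|\nabla Q(y_t)\|^2/(2H)+\frac m2\|z^0-v^*\|^2\le(1-a)\mathcal P_t$.
The group $Q(y_t)-Q(v^*)-\|\nabla Q(y_t)\|^2/(2H)$ is nonnegative, since by smoothness it dominates $Q(y_t-\nabla Q(y_t)/H)-Q(v^*)\ge0$. Hence $\|z^0-v^*\|^2\le2\mathcal P_t/m$, and so
$a|\langle z^0-v^*,b_t\rangle|\le a\|b_t\|\sqrt{2\mathcal P_t/m}\le\frac a2\mathcal P_t+\frac am\|b_t\|^2$.
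The remaining $\|b_t\|^2/H\le\frac am\|b_t\|^2$ supplies the other half of $\frac{2a}{m}$. This is exactly the paper's proof, and it spends the slack $\frac a2\mathcal P_t$ once, with no reserve needed.

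Splitting the cross term throws away an exact cancellation. For example, take $x_t=v^*$ and $Q$ quadratic with curvature $H$ along $z_t-v^*$. Then $z^0=v^*$, so the combined term vanishes. Yet each of your two separate pieces can have size $\frac{a}{1+a}\|z_t-v^*\|\|b_t\|$.

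Your descent and variance accounting, which gives $V_t/H$, is correct.
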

\begin{proof}
Write $d=\nabla Q(y_t)$ and
$z^0=(1-a)z_t+a y_t-(a/m)d$. Expanding the squared norm and using
$a^2/m=1/H$ yields the exact-gradient inequality
\begin{equation}\label{eq:exactpotential}
 Q(y_t)-Q(v^*)-\frac{\norm d^2}{2H}
       +\frac m2\norm{z^0-v^*}^2\le(1-a)\mathcal P_t.
\end{equation}
Here are the scalar inequalities needed for that expansion. Use
$\langle d,y_t-v^*\rangle\ge Q(y_t)-Q(v^*)+(m/2)\norm{y_t-v^*}^2$
and $\langle d,x_t-y_t\rangle\le Q(x_t)-Q(y_t)$, together with
$x_t-y_t=-a(z_t-y_t)$ and
\[
 \norm{(1-a)(z_t-v^*)+a(y_t-v^*)}^2
 =(1-a)\norm{z_t-v^*}^2+a\norm{y_t-v^*}^2
       -a(1-a)\norm{z_t-y_t}^2.
\]
After cancellation the remaining square has a nonpositive coefficient, proving
\eqref{eq:exactpotential}. Smoothness and $Q(y_t-d/H)\ge Q(v^*)$ show that the
first three terms in~\eqref{eq:exactpotential}, excluding its norm term, are
nonnegative. Consequently $\norm{z^0-v^*}^2\le2\mathcal P_t/m$.

Put $e_t=g_t-d$. Smoothness bounds the noisy primal step by
$Q(y_t)-\norm d^2/(2H)+\norm{e_t}^2/(2H)$.
The change in the dual potential is
$-a\langle z^0-v^*,e_t\rangle+\norm{e_t}^2/(2H)$.
Take conditional expectations and use
\[
 a\norm{b_t}\sqrt{2\mathcal P_t/m}
 \le(a/2)\mathcal P_t+(a/m)\norm{b_t}^2,
 \qquad H^{-1}\le a/m.
\]
Together with~\eqref{eq:exactpotential}, this proves~\eqref{eq:AGDrobust}.
\end{proof}

\subsection{Value-accuracy solves without a radius-dependent noise cost}
\begin{lemma}[Localized value solver]\label{lem:valuesolver}
Under~\eqref{eq:valuerange}, let $v^*=\arg\min Q$ and $u^*=u^*(v^*)$.
Suppose the starting pair $(\bar u,\bar v)$ has a known bound
\begin{equation}\label{eq:valueenergy}
 \E[Q(\bar v)-Q(v^*)+m\norm{\bar u-u^*}^2]\le E_0.
\end{equation}
For every $e>0$, a fresh-gradient method returns $(u,v)$ with
\[
 \E[Q(v)-Q(v^*)]\le e,\qquad
 m\E\norm{u-u^*(v)}^2\le e.
\]
Writing
$\mathcal L=1+\log(2+\ell/m)+\log(2+\sigma^2/(me))$, its expected original-call
cost is bounded by
\begin{equation}\label{eq:valuecost}
 C\mathcal L^3\left[\sqrt{H/m}\log(2+E_0/e)+\frac{\sigma^2}{me}\right].
\end{equation}
The function and the starting points may be random, provided the uniform
regularity assumptions hold and~\eqref{eq:valueenergy} is interpreted in
expectation. No almost-sure starting radius or compact auxiliary domain is required.
\end{lemma}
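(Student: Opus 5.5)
We plan to run the accelerated recurrence of Lemma~\ref{lem:agdbias} on $Q$ in epochs, with the effective gradient supplied by Lemma~\ref{lem:reloracle}. Epoch $s$ has target envelope $E_s=2^{-s}E_0$ and stops once $E_S\le e$, so there are $S=\lceil\log_2(E_0/e)\rceil$ epochs. The potential we track is
\[
\mathcal P_t+ m\norm{\bar u_t-u^*(y_t)}^2,
\]
where $\bar u_t$ is the current fast-variable reference. The reference is refreshed inside each call of Lemma~\ref{lem:reloracle}: the level-$0$ inner descent output is kept as the next $\bar u$, so the reference tracks $u^*(\cdot)$ along the iterates. In epoch $s$ we call the oracle with accuracy parameter $e_s=E_s$ and run
\[
t_s=\Theta\bigl(\sqrt{H/m}\bigr)
\]
accelerated steps, each with a minibatch of size
\[
b_s=\Theta\bigl(\mathcal L\,\sigma^2/(mE_s)\bigr)+1
\]
of independent copies of $Y$. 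This batching makes $V_t/H\le V/(b_sH)$ at most a small multiple of $aE_s$.

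The bias term in \eqref{eq:AGDrobust} is handled through the relative bias \eqref{eq:relbias}. We have
\[
\frac{2a}{m}\norm{b_t}^2\le 2a\rho e_s+2a\rho m\norm{\bar u-u^*(y_t)}^2 .
\]
The first piece is $O(a\rho E_s)$. The second is absorbed because $\rho$ is tiny, provided the reference distance stays controlled by the potential. To verify this, we use that $u^*$ is $4$-Lipschitz and that $\norm{y_{t+1}-y_t}^2\lesssim \mathcal P_t/m+\norm{g_t}^2/H^2$. Together with the inner contraction built into the level estimator, this yields a one-step bound of the form
\[
m\norm{\bar u_{t+1}-u^*(y_{t+1})}^2\le \tfrac12 m\norm{\bar u_t-u^*(y_t)}^2+C'(\mathcal P_t+E_s).
\]
Adding a small multiple $\theta$ of this quantity to $\mathcal P_t$ gives a combined potential with contraction factor $(1-a/4)$ plus an additive $O(aE_s)$ term. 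Since $t_s\gtrsim 1/a$, each epoch halves the expected energy, $\E\Pi\le E_s\to E_{s+1}$. The initial energy is controlled by \eqref{eq:valueenergy}, using $x_0=z_0=\bar v$. At the end we output $v=x_t$, and a last level-$0$ solve gives $u$ with $m\E\norm{u-u^*(v)}^2\le e$.

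For the cost, Lemma~\ref{lem:reloracle} charges $O(\mathcal L^2)$ per estimator, since $K=O(\mathcal L)$. Epoch $s$ therefore costs $O(\mathcal L^2 t_s b_s)$. Summing over epochs, the $+1$ part of $b_s$ gives $\mathcal L^2\sqrt{H/m}\,S$, which is the deterministic term of \eqref{eq:valuecost}. The variance part gives
\[
\mathcal L^3\sqrt{H/m}\sum_s \frac{\sigma^2}{mE_s}.
\]
This is geometric and dominated by its last term, $\sigma^2/(me)$. The stray $\sqrt{H/m}$ factor is removed by shortening the noise-dominated epochs: when $b_s>1$, set the step size to $a_s$ with $a_s^2=m/(H b_s)$-type scaling, or equivalently take $t_s\asymp 1/a$ with the per-step batch reduced by $a$. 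This reduction is justified because the variance enters \eqref{eq:AGDrobust} as $V_t/H$, which only has to be $O(aE_s)$ per step and hence $O(E_s)$ in total over $1/a$ steps, not per step. We will carry out this finer choice $b_s=\Theta(\mathcal L\sigma^2 a/(mE_s))+1$, so the variance cost sums to $\mathcal L^3\sigma^2/(me)$. Randomness of the starting point and of the function enters only through conditional expectations, by the tower property.

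The main obstacle is the coupled control of the reference error $\norm{\bar u-u^*(y_t)}$. It must be shown to contract together with $\mathcal P_t$, and the level cutoff $K$ must not depend on it. Our first attempt will be to fold it into the Lyapunov function with weight $\theta\sim\rho^{1/2}$ and to check that the drift of $y_t$ is dominated by $\mathcal P_t$ via smoothness. If that fails, we will instead rerun a fresh inner descent of $O(\log(2+\ell/m))$ steps from $\bar u$ at every call, which adds only logarithmic factors to \eqref{eq:valuecost}.
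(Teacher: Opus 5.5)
\textbf{Verdict: genuine gap.} Your skeleton matches the paper's: halving envelopes, $\Theta(\sqrt{H/m})$ accelerated steps per epoch, minibatches of size $\Theta(\mathcal L\sigma^2 a/(mE_s))+1$ so that variance costs $O(\mathcal L^3\sigma^2/(mE_s))$ per epoch, absorption of the relative bias through the smallness of $\rho$, and a geometric sum of inverse envelopes. The gap is at the obstacle you flagged yourself, the fast reference, and neither of your resolutions works as stated.

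A level-$0$ estimator takes single-sample gradient steps. Its output therefore has mean-square error of order $\sigma^2/(\alpha\ell)$ however many steps are run; for a quadratic with curvature $\ell$ in $u$, one step leaves error exactly $\sigma^2/\ell^2$. So $m\E\norm{\bar u_{t+1}-u^*(y_t)}^2$ can be of order $m\sigma^2/\ell^2$, which is not $O(E_s)$ once $\sigma^2/(mE_s)\gg(\ell/m)^2$. Consequently:
\begin{itemize}
\item your one-step bound with additive $C'(\mathcal P_t+E_s)$ fails in the noise-dominated regime;
\item for the same reason, the final ``level-$0$ solve'' cannot give $m\E\norm{u-u^*(v)}^2\le e$;
\item even an $O(E_s)$ floor is re-incurred at every step, so with weight $\theta$ it contributes about $\theta C'E_s/a$ per epoch. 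Halving then needs $\theta\lesssim a$, not $\theta\sim\rho^{1/2}$.
\end{itemize}
The fallback does not repair this. If $\bar u$ rolls, rerunning $O(\log(2+\ell/m))$ unbatched descent steps hits the same floor. If $\bar u$ stays fixed, this is just what Lemma~\ref{lem:reloracle} already does. The bias term $2a\rho\,m\norm{\bar u-u^*(y_t)}^2$ then stops your recurrence at order $\rho E_0$. Contracting instead with $\log(E_0/E_s)$ steps per call would put an initialization logarithm in front of the noise cost, which the lemma explicitly excludes.

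The missing ingredient is a \emph{minibatched} reference update, and once per epoch suffices. The paper freezes $\bar u$ for the whole epoch and uses
\[
m\norm{\bar u-u^*(y_t)}^2\le 2m\norm{\bar u-u^*}^2+4k^2\mathcal P_t .
\]
The first term already lies inside the epoch envelope $E$. So the relative bias adds only $12\rho k^2a\,p_t+8\rho aE$, which the tiny $\rho$ absorbs into $p_{t+1}\le(1-a/3)p_t+aE/(5\cdot10^7h)$.

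At the end of the epoch it refreshes the reference at $v'=x_{t_E}$. It runs $O(1)$ gradient steps from the old $\bar u$, each averaging $b_f\asymp\sigma^2m/(\alpha\ell E)$ fresh samples. This beats the floor and gives $m\E\norm{u'-u^*(v')}^2\le E/10^5$, hence total energy at most $E/2$. Because $m^2/(\alpha\ell)\le1600$, it costs only $O(1+\sigma^2/(mE))$. The last epoch's refresh supplies the output $u$.

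A rolling reference could also be made to work, using such per-step minibatched inner steps together with $\theta\lesssim a$. Averaging independent single-sample runs would not suffice, since for non-quadratic $g$ their mean can itself be biased at the floor scale.
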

\begin{proof}
We describe one contracting epoch. Its input envelope is $E\ge e$ and bounds
\eqref{eq:valueenergy}. Freeze the input $\bar u$ during this epoch.
Run Lemma~\ref{lem:agdbias} from $x_0=z_0=\bar v$, using the average of
\[
 b=\max\left\{1,\left\lceil\frac{10^8hV}{E\sqrt{mH}}\right\rceil\right\}
\]
independent estimators from Lemma~\ref{lem:reloracle} at each effective query.
The estimator's level cutoff is selected using the final desired accuracy $e$,
not the epoch envelope or the initial distance.

Let $p_t=\E\mathcal P_t$. Strong convexity gives $p_0\le2E$, and the Lipschitz
bound $k\le4$ gives, conditionally at an effective query,
\[
 m\norm{\bar u-u^*(y_t)}^2
 \le2m\norm{\bar u-u^*}^2+4k^2\mathcal P_t.
\]
Indeed $y_t$ is a convex combination of $x_t,z_t$, whose squared distances to
$v^*$ are each at most $2\mathcal P_t/m$.
Averaging reduces the centered variance by $b$ and does not change the bias.
Substitute~\eqref{eq:relbias}--\eqref{eq:relvariance} into
\eqref{eq:AGDrobust}, and use $a^2=m/H\le a$, $e\le E$.
The coefficient added to $p_t$ is at most $12\rho k^2a$, and the other added terms
are at most $8\rho a E+V/(Hb)$. Our choices therefore give the explicit recurrence
\[
 p_{t+1}\le(1-a/3)p_t+\frac{aE}{5\cdot10^7h}.
\]
After
$t_E=\lceil(3/a)\log(10^7h)\rceil$ steps it follows that
\begin{equation}\label{eq:epochP}
 p_{t_E}\le E/(10^5h).
\end{equation}

Refresh the fast reference at $v'=x_{t_E}$. Run $s_f=O(1)$ batch gradient steps
on $g(\cdot,v')$ from the old $\bar u$, with contraction at most $10^{-6}$ and
batch size
\[
 b_f=\max\left\{1,\left\lceil
             \frac{10^6\sigma^2m}{\alpha\ell E}\right\rceil\right\}.
\]
The starting fast error is at most $3E/m$ in expectation, by
\eqref{eq:epochP} and the same Lipschitz estimate. Thus the refreshed point $u'$
satisfies $m\E\norm{u'-u^*(v')}^2\le E/10^5$.
The triangle inequality in squared norm,~\eqref{eq:epochP}, and $k^2<h$ show
\[
 \E[Q(v')-Q(v^*)+m\norm{u'-u^*}^2]
 \le\frac{7E}{10^5}\le E/2.
\]
The expected epoch cost is at most
\[
 C\mathcal L^2\left[\sqrt{H/m}+\frac{(K+1)\sigma^2}{mE}\right].
\]
The refresh cost is included: $m^2/(\alpha\ell)\le1600$ under
\eqref{eq:valuerange}, so it is no larger than a universal constant times
$1+\sigma^2/(mE)$.

Start with envelope $\max\{E_0,e\}$ and halve it epoch by epoch, ending with
an epoch whose envelope is in $[e,2e]$. The last epoch bounds both required
errors by $e$. There are $O(\log(2+E_0/e))$ epochs, while the sum of their
inverse envelopes is at most $2/e$ (up to an immaterial endpoint factor).
Since $K+1\le C\mathcal L$, summing the costs proves~\eqref{eq:valuecost}.
Every inequality used either holds conditionally on the current query or uses
the stated expected envelope. This also proves the assertion for random initial
data and random functions. In particular, the level cutoff and the noise cost
are independent of $E_0$; there is no hidden initialization logarithm multiplying
$\sigma^2/(me)$.
\end{proof}

\subsection{Recursive regularization and a saddle-gap certificate}
The following elementary identity supplies the gradient-small conversion. We
include its proof instead of importing a solver with a different oracle or domain.

\begin{lemma}[Recursive value-to-gradient conversion]\label{lem:RRnew}
Let $q$ be $H_0$-smooth and $\mu$-strongly convex. Put
$J=1+\lceil\log_2(H_0/\mu)\rceil$, $m_j=2^j\mu$, and $q_0=q$.
For $j=1,\ldots,J$, obtain $v_j$ with
\[
 \E[q_{j-1}(v_j)-\min q_{j-1}]\le e_j,
 \qquad e_j=\frac{\delta^2}{10^4J^2m_{j-1}},
\]
and define $q_j=q_{j-1}+(m_{j-1}/2)\norm{\,\cdot-v_j}^2$.
Then $\E\norm{\nabla q(v_J)}^2\le\delta^2$.
Unconditional expectations suffice, including when the successive functions are random.
\end{lemma}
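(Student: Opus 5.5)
The plan is the standard recursive-regularization telescoping argument. Every estimate is done in $L^2$, through Minkowski's inequality for $\norm{\cdot}_{L^2}=(\E\norm{\cdot}^2)^{1/2}$, so that only the unconditional bounds $\E[q_{j-1}(v_j)-\min q_{j-1}]\le e_j$ are used.

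\emph{Deterministic structure.} By construction $q_j=q+\sum_{i=1}^{j}(m_{i-1}/2)\norm{\,\cdot-v_i}^2$. Hence $q_j$ is strongly convex with modulus $\mu+\sum_{i<j}m_i=2^j\mu=m_j$, and smooth with constant $H_j\le H_0+m_j$. Let $v_j^\star=\arg\min q_{j-1}$. Differentiating the identity for $q_J$ and evaluating at $v_J$ gives
\[
 \nabla q(v_J)=\nabla q_{J-1}(v_J)-\sum_{j=1}^{J-1}m_{j-1}(v_J-v_j).
\]
The last regularizer of $q_J$ has zero gradient at its own center $v_J$, which is why we may write $q_{J-1}$ rather than $q_J$.

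\emph{Step 1: the leading term.} Since $J-1\ge\log_2(H_0/\mu)$, we have $H_0\le m_{J-1}$ and hence $H_{J-1}\le2m_{J-1}$. Smoothness of $q_{J-1}$ then gives
\[
 \norm{\nabla q_{J-1}(v_J)}^2\le2H_{J-1}\bigl(q_{J-1}(v_J)-\min q_{J-1}\bigr).
\]
In expectation the right side is at most $4m_{J-1}e_J=4\delta^2/(10^4J^2)$.

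\emph{Step 2: step lengths.} We bound each increment $\norm{v_{i+1}-v_i}$ in $L^2$ by splitting it through $v_{i+1}^\star$.
\begin{itemize}
 \item For the first piece, note that $q_i(v_i)=q_{i-1}(v_i)$ and $\min q_i\ge\min q_{i-1}$. Strong convexity of $q_i$ then gives
 \[
  \frac{m_i}{2}\norm{v_i-v_{i+1}^\star}^2\le q_i(v_i)-\min q_i\le q_{i-1}(v_i)-\min q_{i-1}.
 \]
 So $\E\norm{v_i-v_{i+1}^\star}^2\le2e_i/m_i$.
 \item For the second piece, strong convexity of $q_i$ gives $\E\norm{v_{i+1}-v_{i+1}^\star}^2\le2e_{i+1}/m_i$.
\end{itemize}
Hence $\norm{v_{i+1}-v_i}_{L^2}\le\sqrt{2e_i/m_i}+\sqrt{2e_{i+1}/m_i}$. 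Since $e_i m_{i-1}$ is constant in $i$, this is at most $C\,\delta/(100J\,m_{i-1})$.

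\emph{Step 3: summation.} Minkowski's inequality gives
\[
 m_{j-1}\norm{v_J-v_j}_{L^2}\le\sum_{i=j}^{J-1}m_{j-1}\norm{v_{i+1}-v_i}_{L^2}\le\frac{C\delta}{100J}\sum_{i\ge j}2^{j-i}\le\frac{2C\delta}{100J}.
\]
Summing over the $J-1$ values of $j$ gives $O(\delta/100)$. Adding the Step 1 term, $\norm{\nabla q(v_J)}_{L^2}\le\delta$ once the constants are checked; the $10^4$ in $e_j$ is there to make this work.

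\emph{Main obstacle.} The only delicate point is the randomness. The centers $v_j$, and hence the $q_j$ and $v_j^\star$, are random. The strong-convexity inequalities above hold pointwise, so taking expectations directly yields the bounds used. Minkowski's inequality then avoids any conditioning, and no independence is needed. I also need to verify the geometric weighting: the factor $m_{j-1}/m_{i-1}=2^{j-i}$ is what keeps the double sum linear in $J$ rather than quadratic, and the $J^2$ in $e_j$ absorbs it.
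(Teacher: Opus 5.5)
Your proposal is correct and follows essentially the paper's argument. It uses the same pointwise strong-convexity bounds on $\norm{v_i-\arg\min q_i}$ and $\norm{v_{i+1}-\arg\min q_i}$, the same smoothness estimate $\E\norm{\nabla q_{J-1}(v_J)}^2\le 4m_{J-1}e_J$, and Minkowski's inequality in $L^2$ without any conditioning. The only cosmetic difference is the order of summation: the paper first rewrites the correction as $\sum_{k=2}^J(m_{k-1}-\mu)(v_k-v_{k-1})$, while your geometrically weighted double sum produces exactly those coefficients $\sum_{j\le i}m_{j-1}=m_i-\mu$ once you apply Minkowski.
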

\begin{proof}
Let $z_j=\arg\min q_j$. Since
$q_j(v_j)-\min q_j\le q_{j-1}(v_j)-\min q_{j-1}$, strong convexity gives
$\E\norm{v_j-z_j}^2\le2e_j/m_j$.
Also $\E\norm{v_{j+1}-z_j}^2\le2e_{j+1}/m_j$, so
\[
 \E\norm{v_{j+1}-v_j}^2\le4(e_j+e_{j+1})/m_j.
\]
The smoothness of $q_{J-1}$ is at most
$H_0+m_{J-1}-\mu\le2m_{J-1}$, giving
$\E\norm{\nabla q_{J-1}(v_J)}^2\le4m_{J-1}e_J$.
The exact telescoping identity is
\[
 \nabla q(v_J)=\nabla q_{J-1}(v_J)
       -\sum_{j=2}^J(m_{j-1}-\mu)(v_j-v_{j-1}).
\]
The triangle inequality in $L^2$ bounds the square root of the desired expected
squared norm by
$2\delta/(100J)+(J-1)\sqrt{12}\delta/(100J)<\delta$.
No conditional accuracy assertion is needed for this calculation.
\end{proof}

\begin{theorem}[Unconditional localized SCSC primitive]\label{thm:innernew}
Let $g$ be jointly $\ell$-smooth, $\alpha$-strongly convex in $u$, and
$\mu$-strongly concave in $v$, with $\alpha\ge\ell/4$. Let $(u^*,v^*)$ be its
saddle, $\kappa_g=\ell/\mu$, and suppose
\[
 \E[\ell\norm{u_0-u^*}^2+\mu\norm{v_0-v^*}^2]\le\mathcal E_0.
\]
There is a fresh-gradient algorithm returning $(u,v)$ with
$\E\Gap_g(u,v)\le\eta$, where
$\Gap_g(u,v)=\max_wg(u,w)-\min_zg(z,v)$, at expected original-call cost
\begin{equation}\label{eq:innercomplete}
 C\Lambda_g^8\left[
 \sqrt{\kappa_g}\log\left(2+\frac{\kappa_g\mathcal E_0}{\eta}\right)
                 +\frac{\sigma^2}{\mu\eta}\right],\qquad
 \Lambda_g=1+\log(2+\kappa_g)+\log\left(2+\frac{\sigma^2}{\mu\eta}\right).
\end{equation}
The same statement holds with random $g$ and random initialization satisfying
these uniform assumptions and expected energy bound. All auxiliary minimizations
are implemented by the explicit fresh-gradient steps above.
\end{theorem}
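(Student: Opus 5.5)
The plan is to assemble the primitive from Lemmas~\ref{lem:reloracle}--\ref{lem:RRnew} in three phases: a dual phase driving $\nabla q$ small, a fast primal phase, and a residual certificate. Work with $q(v)=-\min_u g(u,v)$, which is $\mu$-strongly convex and $H_0=5\ell$-smooth, with $\nabla q(v)=-\nabla_v g(u^*(v),v)$ and maximizer $v^*$ of $-q$.

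\textbf{Dual phase.} Apply the recursive chain of Lemma~\ref{lem:RRnew} with $J=O(\log\kappa_g)$ levels, $m_j=2^j\mu$, and target $\delta^2=\mu\eta/C$. Each level-$j$ objective $Q=q_{j-1}$ equals $q$ plus a known convex quadratic. It is $m_{j-1}$-strongly convex and at most $2m_{J-1}\le 20\ell$-smooth, so \eqref{eq:valuerange} holds. Each level is solved with Lemma~\ref{lem:valuesolver} to value accuracy $e_j=\delta^2/(10^4J^2m_{j-1})$. The warm start at level $j$ is the output pair of level $j-1$, i.e.\ $(u,v_{j-1})$, except at level $1$, where it is $(u_0,v_0)$.

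The main bookkeeping is the initial energy $E_0^{(j)}$ required by \eqref{eq:valueenergy}.
\begin{itemize}
\item At level $1$, smoothness of $q$ and $k\le4$ give $q(v_0)-q(v^*)\le 5\ell\|v_0-v^*\|^2/2$ and $\|u_0-u^*(v^*)\|^2=\|u_0-u^*\|^2$. Hence $E_0^{(1)}\lesssim\kappa_g\mathcal E_0$ after dividing the $\mu\|v_0-v^*\|^2$ part by $\mu$ and multiplying by $\ell$.
\item At later levels, the previous output has value error $e_{j-1}$ and fast error $e_{j-1}/m$ for the previous objective. The minimizer shift $\|z_j-z_{j-1}\|$ is controlled as in the proof of Lemma~\ref{lem:RRnew}, so $E_0^{(j)}\lesssim e_{j-1}\cdot O(1)$.
\end{itemize}
Consequently only level $1$ carries $\log(2+\kappa_g\mathcal E_0/\eta)$; the others cost $O(\log J)$ epochs each. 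The statistical costs are $\sigma^2/(m_{j-1}e_j)=10^4J^2\sigma^2/\delta^2$ per level, summing to $O(J^3\sigma^2/(\mu\eta))$. Each level's $\sqrt{H/m}\le\sqrt{20\kappa_g}$. The output is $v_J$ with $\E\|\nabla q(v_J)\|^2\le\delta^2$.

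\textbf{Primal phase and certificate.} Next run fresh batched gradient descent on $g(\cdot,v_J)$, which is $\alpha\ge\ell/4$ strongly convex, warm-started from the last fast point. Geometric batch growth yields $\E\|\nabla_u g(u,v_J)\|^2\le\alpha\eta/4$ at cost $O(\log(\cdot)+\sigma^2/(\alpha\eta))$; the warm start is already within $O(e_J/m)$, so no $\mathcal E_0$ logarithm appears. Then write $\nabla_v g(u,v_J)=-\nabla q(v_J)+[\nabla_v g(u,v_J)-\nabla_v g(u^*(v_J),v_J)]$. The bracket is at most $\ell\|u-u^*(v_J)\|\le(\ell/\alpha)\|\nabla_u g\|$, so $\E\|\nabla_v g\|^2\lesssim\delta^2+\alpha\eta$. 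I must be careful here: $\alpha\eta$ divided by $2\mu$ is $\kappa_g\eta$, too large. So the primal phase should target $\E\|\nabla_u g\|^2\le\mu\eta/C$ instead. That costs $\sigma^2/(\alpha\cdot\mu\eta/\alpha)$-type work, i.e.\ $O(\sigma^2/(\mu\eta))$ with $\log\kappa_g$ steps, which is acceptable. The residual certificate
\[
\Gap_g(u,v)\le\frac{\|\nabla_u g\|^2}{2\alpha}+\frac{\|\nabla_v g\|^2}{2\mu}
\]
then gives $\E\Gap_g\le\eta$.

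\textbf{Accounting and main obstacle.} Lemma~\ref{lem:valuesolver}'s $\mathcal L^3$ factor, combined with $J\le\Lambda_g$ levels and the $J^2$ factor in $e_j$, yields the polylogarithmic prefactor, comfortably within $\Lambda_g^8$. Random $g$ and initialization are handled because every lemma used is stated with unconditional expectations. I expect the main obstacle to be the warm-start energy transfer between recursive levels. Each level must start from an energy bound proportional to its own target, not to $\mathcal E_0$; otherwise a $\log\mathcal E_0$ multiplies $J$ (harmless) or, worse, the fast-variable error inflates the relative bias. I would first try proving $\E[q_j(v_j)-\min q_j+m_j\|u-u^*(z_j)\|^2]\le C e_j$ using $m_j=2m_{j-1}$, the $4$-Lipschitz $u^*$, and strong convexity of $q_j$.
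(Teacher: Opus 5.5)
Your proposal is correct and follows essentially the paper's proof: the recursive regularization chain of Lemma~\ref{lem:RRnew} with $\delta^2\asymp\mu\eta$, each stage solved by Lemma~\ref{lem:valuesolver}, and a cold-start envelope $O(\kappa_g\mathcal E_0)$ only at the first stage. Later stages get a warm envelope $O(e_j)$; the paper obtains $69e_j$ from $m_j=2m_{j-1}$, the $4$-Lipschitz $u^*$, and $\E\norm{v_j-z_j}^2\le2e_j/m_j$, which is exactly the inequality you planned to prove, and then concludes with the residual certificate.

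The one difference is your separate final primal phase. It is harmless but unnecessary: at the last stage $m_{J-1}\ge H_0=5\ell$, so the value solver's fast-point guarantee $m_{J-1}\E\norm{u_J-u^*(v_J)}^2\le e_J$ already gives $\ell^2\E\norm{u_J-u^*(v_J)}^2\le\delta^2/10^4$. The paper uses this bound directly to control both $\nabla_u g$ and the correction term in $\nabla_v g$.
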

\begin{proof}
Use $q=-\min_u g$, $H_0=5\ell$, and Lemma~\ref{lem:RRnew} with
$\delta^2=\mu\eta/100$. Implement each value solve by Lemma~\ref{lem:valuesolver}.
The regularizations are known dual quadratics, leaving the fast minimizer unchanged.
For stage $j$, $m=m_{j-1}$, and one can use
$H=H_0+m_{j-1}-\mu\le15\ell$, so~\eqref{eq:valuerange} holds.
The value solver also returns a fast point $u_j$ with
$m_{j-1}\E\norm{u_j-u^*(v_j)}^2\le e_j$.

The first starting value energy is at most $C\kappa_g\mathcal E_0$, by
$q(v_0)-\min q\le(H_0/2)\norm{v_0-v^*}^2$.
For subsequent stages use $(u_j,v_j)$ as the reference for $q_j$.
Its new value gap is at most $e_j$ in expectation. Moreover, since
$u^*(\cdot)$ is $4$-Lipschitz and
$\E\norm{v_j-z_j}^2\le2e_j/m_j$,
\[
 m_j\E\norm{u_j-u^*(z_j)}^2
 \le4e_j+4\cdot16e_j\le68e_j.
\]
Thus $69e_j$ is a deterministic expected starting envelope for the next solve.
The ratio $69e_j/e_{j+1}=138$ is universal: subsequent initialization logarithms
cannot depend on the cold-start energy.

At the final stage $m_{J-1}\ge H_0$, and hence
\[
 \ell^2\E\norm{u_J-u^*(v_J)}^2
 \le\frac{\ell^2\delta^2}{10^4J^2m_{J-1}^2}\le\delta^2/10^4.
\]
Lipschitzness gives
$\E\norm{\nabla_u g(u_J,v_J)}^2\le\delta^2/10^4$ and
$\E\norm{\nabla_v g(u_J,v_J)}^2\le3\delta^2$.
Strong convexity and strong concavity give the pointwise residual certificate
\[
 \Gap_g(u,v)\le\frac{\norm{\nabla_u g(u,v)}^2}{2\alpha}
                     +\frac{\norm{\nabla_v g(u,v)}^2}{2\mu}.
\]
Since $\mu\le\ell\le4\alpha$, the chosen $\delta$ proves the desired gap bound.

For cost accounting, $J=O(\log(2+\kappa_g))$, and at every stage
$\sigma^2/(m_{j-1}e_j)=10^4J^2\sigma^2/\delta^2$.
All multilevel logarithms are bounded by $C\Lambda_g$ and are independent of
$\mathcal E_0$. The sum of $\sqrt{H/m_{j-1}}$ is $O(\sqrt{\kappa_g})$:
use $\sqrt{H_0/m+1}\le\sqrt{H_0/m}+1$ and $J=O(\sqrt{\kappa_g})$.
Only the first stage has an initialization logarithm, bounded by
\[
 C\log(2+\kappa_g)+C\log(2+\kappa_g\mathcal E_0/\eta).
\]
The total statistical term before the multilevel overhead is
$10^4J^3\sigma^2/\delta^2$.
Substituting these bounds into~\eqref{eq:valuecost} proves~\eqref{eq:innercomplete};
the power eight is a deliberately loose universal logarithmic envelope.
When $\sigma=0$ every call uses the deterministic branch of
Lemma~\ref{lem:reloracle}, proving this endpoint directly.
\end{proof}

\section{Outer Algorithm and a Fixed Query Budget}\label{sec:outernew}

\begin{theorem}[Localized unrestricted upper bound]\label{thm:uppernew}
In the nontrivial regime $L\Delta\ge\epsilon^2$, there is a fixed-budget fresh-gradient
algorithm with success probability at least $2/3$ and query bound
\begin{equation}\label{eq:uppercomplete}
 C\Lambda^{10}\left[
 A+B+C+\sqrt\kappa\log\left(2+\frac{\kappa^2G^2}{\epsilon^2}
                                      +\frac{\kappa^2L\Delta}{\epsilon^2}\right)
 \right],
\end{equation}
where the $C$ inside the brackets denotes the statistical term defined in
Section~1, the outside $C$ is a universal constant, and
\[
 \Lambda=1+\log(2+\kappa)+\log(2+L\Delta/\epsilon^2)
                           +\log(2+\kappa^2\sigma^2/\epsilon^2).
\]
In particular the prefactor $\Lambda^{10}$ is independent of $G$.
\end{theorem}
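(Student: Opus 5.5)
The plan is to run \proxouter{} (Algorithm~\ref{alg:outer}) with $T=\lceil 1+c_TL\Delta/\epsilon^2\rceil$ coarse stages at accuracy $\eta_c=c_c\epsilon^2/L$. Each subproblem $g_{x_t}$ is solved by Theorem~\ref{thm:innernew}. Every $g_c$ is jointly $3L$-smooth, $L$-strongly convex in $u$ (because $\Phi$, and indeed $f(\cdot,v)$, is $L$-weakly convex), and $\mu$-strongly concave in $v$. Hence $\alpha\ge\ell/4$ holds with $\ell=3L$, and $\kappa_g=3\kappa$.

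\textbf{Descent step.} First I would prove the expected descent inequality. By the gap bound,
\[
\E P_{x_t}(x_{t+1})-\E P_{x_t}(u^*_{x_t})\le\eta_c .
\]
Then $\Phi(x_{t+1})\le P_{x_t}(x_{t+1})$ combined with \eqref{eq:main-moreau-identities} gives
\[
\E\Phi(x_{t+1})\le\E\Phi(x_t)-\E\|m(x_t)\|^2/(8L)+\eta_c .
\]
Telescoping yields $\frac1T\sum_t\E\|m(x_t)\|^2\le 8L\Delta/T+8L\eta_c=O(\epsilon^2)$ and $\sum_t L\E\|x_{t+1}-x_t\|^2=O(\Delta+T\eta_c)$.

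\textbf{Fine step.} For the fine step, the gap of $g_{x_R}$ at $(\widehat x,\cdot)$ bounds $P_{x_R}(\widehat x)-P_{x_R}(u^*)$. By $L$-strong convexity and $3L$-smoothness of $P$, this controls
\[
\E\|\nabla P_{x_R}(\widehat x)\|^2+L^2\E\|\widehat x-u^*\|^2\lesssim L\eta_f .
\]
I must use $\eta_f=c_f\epsilon^2/(\kappa L)$, not $\epsilon^2/L$. This is because $\Phi$ is only $O(\kappa L)$-smooth, and \eqref{eq:main-gradient-identity} holds only for the exact maximizer. I would therefore bound $\nabla\Phi(\widehat x)$ directly by $\nabla P_{x_R}(\widehat x)-2L(\widehat x-x_R)$, with $\nabla P$ controlled through smoothness of $P$ (Lipschitz $O(\kappa L)$ times the distance, which is $O(\sqrt{\eta_f/L})$). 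This gives $\E\|\nabla\Phi(\widehat x)\|^2\le\epsilon^2/144$. Markov's inequality then gives failure probability at most $1/12$.

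\textbf{Warm-start energies.} These are the main obstacle. At stage $0$, $\mathcal E_0=\ell\|u_0-u^*\|^2+\mu\|v_0-v^*\|^2$ with $(u_0,v_0)=(0,0)$. I would bound $\|u^*_0\|^2\le\Delta/L$ using the gap and strong convexity. For the dual part, \eqref{eq:localization-equivalence-main} together with $y^*$ being $\kappa$-Lipschitz gives
\[
\mu\|v^*\|^2\lesssim G^2/\mu+\kappa^2 L\|u^*\|^2 .
\]
The resulting logarithm is $\log(2+\kappa G^2/(\mu\eta_c)+\dots)=\log(2+\kappa^2G^2/\epsilon^2+\kappa^2L\Delta/\epsilon^2)$, matching \eqref{eq:uppercomplete}.

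For later stages the warm start $(x_{t+1},y_{t+1})$ must be compared with the saddle of $g_{x_{t+1}}$. The center shift moves $u^*$ by $O(\|x_{t+1}-x_t\|)$ (the prox map is Lipschitz) and $v^*$ by $O(\kappa\|x_{t+1}-x_t\|)$. The distance from the previous saddle is controlled by strong monotonicity: $\ell\|u-u^*\|^2+\mu\|v-v^*\|^2\lesssim\kappa\,\Gap$. Hence
\[
\E\mathcal E_0^{(t+1)}\lesssim\kappa\eta_c+\kappa L\,\E\|x_{t+1}-x_t\|^2 .
\]
I would pass this random envelope into Theorem~\ref{thm:innernew} only in expectation. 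Since $\log$ is concave, Jensen's inequality over stages gives an average log of $\log(2+\kappa^2(1+\Delta/(T\eta_c)))=O(\Lambda)$. This is the delicate point: the sum of the logs must not acquire $G$, and only stage $0$ sees $G$. The fine solve starts from $(x_{R+1},y_{R+1})$ with energy $O(\kappa\eta_c)$, so its log ratio is $O(\log\kappa)$.

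\textbf{Cost and truncation.} Expected cost is $T\cdot\widetilde O(\sqrt\kappa+\sigma^2/(\mu\eta_c))=\widetilde O(A+B+\sqrt\kappa)$ for the coarse solves, plus $\widetilde O(\sqrt\kappa+\kappa^2\sigma^2/\epsilon^2)$ for the fine solve, plus the stage-$0$ log term. Here $\sqrt\kappa\le A$ in the regime $L\Delta\ge\epsilon^2$, and the $\Lambda_g^8$ factors are at most $C\Lambda^{8}$. Finally, I would truncate at $\lceil12\mathcal B\rceil$ calls, failing with probability at most $1/12$, so the total success probability is at least $5/6\ge2/3$. The extra powers of $\Lambda$ absorb the Jensen step and the logarithms in the constants.
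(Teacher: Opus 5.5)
Your architecture matches the paper's: coarse proximal stages solved with Theorem~\ref{thm:innernew}, expected descent and telescoping, one refinement of the randomly chosen $g_{x_R}$, and a global Markov truncation. However, the fine step, which is exactly where the term $C=\kappa^2\sigma^2/\epsilon^2$ is earned, is not proved by your argument.

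First, $P_c=\Phi+L\|\cdot-c\|^2$ is not $3L$-smooth; $3L$ is the joint smoothness of $g_c$. $P_c$ inherits the $(1+\kappa)L$-smoothness of $\Phi$, so $H_P\le(3+\kappa)L\le4\kappa L$, and this order is attained: a bilinear coupling $bxy-\tfrac\mu2y^2$ with $b\asymp L$ gives $\Phi''\asymp\kappa L$. Your first display would let $\eta_f=\epsilon^2/L$ suffice and remove $C$ entirely, which Theorem~\ref{thm:C} rules out.

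Second, the route you actually use loses a factor $\kappa$. It multiplies the Lipschitz constant $O(\kappa L)$ by the distance $\|\widehat x-u^*_{x_R}\|=O(\sqrt{\eta_f/L})$. This gives only $\E\|\nabla P_{x_R}(\widehat x)\|^2\lesssim\kappa^2L\eta_f$, which is of order $c_f\kappa\epsilon^2$ when $\eta_f=c_f\epsilon^2/(\kappa L)$. So the claimed $\epsilon^2/144$ does not follow. Rescuing that route would force $\eta_f\asymp\epsilon^2/(\kappa^2L)$ and a fine statistical cost $\kappa^3\sigma^2/\epsilon^2$ instead of $C$.

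The missing step is the value-to-gradient inequality for a smooth convex function:
\[
\|\nabla P_{x_R}(\widehat x)\|^2\le2H_P\bigl(P_{x_R}(\widehat x)-\min P_{x_R}\bigr),
\]
whose expectation is at most $8\kappa L\eta_f$. Combine it with $L\|\widehat x-u^*_{x_R}\|\le\|\nabla P_{x_R}(\widehat x)\|$ and \eqref{eq:main-gradient-identity}. Note that \eqref{eq:main-gradient-identity} is an algebraic identity valid for every $\widehat x$, not only the exact minimizer. This yields $\E\|\nabla\Phi(\widehat x)\|^2\le2\E\|m(x_R)\|^2+18\E\|\nabla P_{x_R}(\widehat x)\|^2=O(\epsilon^2)$ with $\eta_f\asymp\epsilon^2/(\kappa L)$.

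The warm-start accounting via Jensen is also not implementable as stated. Theorem~\ref{thm:innernew} takes a known deterministic envelope $\mathcal E_0$ as input, since it fixes the number of halving epochs. The per-stage numbers $L\E\|x_{t+1}-x_t\|^2$ are unknown to the algorithm. The realized displacement cannot serve as a random envelope either, because the previous stage's gap is controlled only in unconditional expectation, not conditionally on that displacement.

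Jensen is also unnecessary. Bound each $L\E\|x_{t+1}-x_t\|^2$ by the whole telescoped sum $\Delta+T\eta_c$; this gives one common known envelope $C(\eta_c+\Delta+T\eta_c)$ for every warm start. The paper does exactly this, and avoids your extra factor $\kappa$ by using strong monotonicity of the saddle operator with weights $(L,\mu)$, so that $\mu\|v^*_c-v^*_{c'}\|^2\le2L\|c-c'\|^2$. Each stage's logarithm is then already $O(\log(2+\kappa)+\log(2+L\Delta/\epsilon^2))=O(\Lambda)$. Your other loose factors of $\kappa$, in the gap-to-distance bound and in $\mu\|v^*\|^2$, are harmless inside logarithms.
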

\begin{proof}
We first analyze an algorithm with random, almost surely finite running cost.
Strong concavity gives $\norm{y^*(x)-y^*(x')}\le\kappa\norm{x-x'}$.
Danskin's identity implies that $\Phi$ is $(1+\kappa)L$-smooth, and $\Phi$ is
$L$-weakly convex as a pointwise maximum of $L$-weakly convex functions.
For a center $c$, define
\begin{gather*}
 g_c(u,v)=f(u,v)+L\norm{u-c}^2,\qquad
 P_c(u)=\Phi(u)+L\norm{u-c}^2,\\
 u_c^*=\arg\min P_c,\qquad m(c)=2L(c-u_c^*).
\end{gather*}
The subproblem is $3L$-smooth, $L$-strongly convex in $u$, and $\mu$-strongly
concave in $v$, so Theorem~\ref{thm:innernew} applies. Also
\[
 m(c)=\nabla\Phi(u_c^*),\qquad
 P_c(c)-P_c(u_c^*)\ge\norm{m(c)}^2/(8L).
\]
Set, for definiteness,
\[
 \eta_c=\frac{\epsilon^2}{10^6L},\qquad
 T=\left\lceil\frac{10^6L\Delta}{\epsilon^2}\right\rceil.
\]
For $t=0,\ldots,T-1$, solve $g_{x_t}$ to expected saddle gap $\eta_c$, and use
its output $(x_{t+1},y_{t+1})$ as the next reference. The gap dominates primal
suboptimality, so unconditionally
\[
 \E\Phi(x_{t+1})+L\E\norm{x_{t+1}-x_t}^2
 \le\E\Phi(x_t)-\frac{\E\norm{m(x_t)}^2}{8L}+\eta_c.
\]
Telescoping yields
\begin{equation}\label{eq:outerexpected}
 \frac1T\sum_{t<T}\E\norm{m(x_t)}^2
 \le8L(\Delta/T+\eta_c)\le16\epsilon^2/10^6,
 \quad L\sum_{t<T}\E\norm{x_{t+1}-x_t}^2\le\Delta+T\eta_c.
\end{equation}

We justify the expected energy envelopes used by the inner solver; no unobserved
radius is passed to the algorithm. For the first subproblem,
$\norm{u_0^*}^2\le2\Delta/L$ and
$\norm{y^*(0)}\le G/\mu$. Consequently its energy from the origin is at most
$C(G^2/\mu+\kappa\Delta)$, a known parameter bound.
For two centers, strong monotonicity of the saddle operator gives
\[
 L\norm{u_c^*-u_{c'}^*}^2+\mu\norm{v_c^*-v_{c'}^*}^2
 \le2L\langle c-c',u_c^*-u_{c'}^*\rangle
 \le\frac L2\norm{u_c^*-u_{c'}^*}^2+2L\norm{c-c'}^2.
\]
A saddle gap at most $\eta_c$ in expectation bounds the preceding weighted
squared error by $2\eta_c$. Together with~\eqref{eq:outerexpected}, a common
known envelope $C(\eta_c+\Delta+T\eta_c)$ therefore bounds every subsequent
expected starting energy. This establishes the inductive hypotheses for
Theorem~\ref{thm:innernew}, even though each subproblem is random.
All its warm-start logarithms are independent of $G$.

Choose $R$ uniformly from $\{0,\ldots,T-1\}$, independently of the coarse solves.
Starting from its stored output $(x_{R+1},y_{R+1})$, refine the \emph{same}
subproblem $g_{x_R}$ to expected gap
\[
 \eta_f=\frac{\epsilon^2}{10^6\kappa L},
\]
and let $\widehat x$ be its primal output. Its expected starting energy is
$O(\eta_c)$. The function $P_{x_R}$ is at most $4\kappa L$-smooth, so
\[
 \E\norm{\nabla P_{x_R}(\widehat x)}^2\le8\kappa L\eta_f,
 \qquad L\norm{\widehat x-u_{x_R}^*}\le\norm{\nabla P_{x_R}(\widehat x)}.
\]
Using the exact identity
\[
 \nabla\Phi(\widehat x)=m(x_R)+\nabla P_{x_R}(\widehat x)
                              -2L(\widehat x-u_{x_R}^*),
\]
we obtain
\[
 \E\norm{\nabla\Phi(\widehat x)}^2
 \le2\E\norm{m(x_R)}^2+18\E\norm{\nabla P_{x_R}(\widehat x)}^2
 \le176\epsilon^2/10^6<\epsilon^2/12.
\]

The coarse statistical cost is $\widetilde O(T\sigma^2/(\mu\eta_c))=
\widetilde O(B)$; the fine statistical cost is
$\widetilde O(\sigma^2/(\mu\eta_f))=\widetilde O(C)$.
The coarse deterministic cost is $\widetilde O(A)$ plus the single cold-start
logarithm displayed in~\eqref{eq:uppercomplete}; the fine deterministic cost is
absorbed by $A$ and its permitted logarithms. The explicit envelope
$\Lambda^{10}$ dominates all logarithms in~\eqref{eq:innercomplete}, including
warm initialization ratios and the original parameters, except the one displayed
$G$-dependent logarithm. Thus the right side of~\eqref{eq:uppercomplete}, with
an appropriate universal constant, bounds the expected \emph{total original-call}
cost by a known number $\mathcal B$.

Finally execute this algorithm with a hard limit $\lceil12\mathcal B\rceil$
on original oracle calls. If it would exceed that limit, terminate and output
zero. Couple it with the preceding untruncated algorithm using the same random
seeds. Markov's inequality gives probability at most $1/12$ of truncation.
The probability of an unsuccessful untruncated output is at most $1/12$, also
by Markov's inequality applied to the last squared-gradient bound. Therefore the
fixed-budget algorithm succeeds with probability at least $5/6$, which is more
than required. Absorbing the factor twelve proves~\eqref{eq:uppercomplete}.
Every cutoff concerns original calls, including the random multilevel costs.
\end{proof}

\section{Proof of the Main Results}\label{sec:closure}
Theorems~\ref{thm:Anew},~\ref{thm:B},~\ref{thm:C} and
Corollary~\ref{cor:D} give four admissible hard distributions, not a direct sum
requiring simultaneous budgets. The maximum of their lower bounds is at least
a universal fraction of their sum. Thus, after increasing the universal
nontrivial-regime constants if needed,
\[
 \Tloc\ge c(A+B+C+D_G).
\]
Together with Theorem~\ref{thm:uppernew} this proves the main theorem.
To see the stated logarithmic equivalence explicitly, write
$b=G/(\epsilon\sqrt\kappa)$. The cold-start logarithm is bounded by
\[
 C\big[1+\log(2+\kappa)+\log(2+L\Delta/\epsilon^2)+\log_+b\big].
\]
Since $A\ge\sqrt\kappa$ in the nontrivial regime, the first three contributions
are absorbed into the permitted polylogarithmic multiple of $A$; the last is
exactly a universal multiple of $D_G$. The prefactor does not conceal any
additional dependence on $G$.

The deterministic endpoint uses the exact-oracle lower constructions in
Sections~\ref{sec:D} and~\ref{sec:Anew} and the deterministic branch of the
upper solver. Therefore
\[
 \Tloc(L,\mu,\Delta,0,G,\epsilon)=\widetilde\Theta(A+D_G)
\]
holds directly, not by passing to a limit in a noise-dependent construction.
Removing $G$ gives Theorem~\ref{thm:paper-infinite}, already proved in
Section~\ref{sec:D}. This completes the localized characterization and the unlocalized impossibility result.

These assertions keep the nontrivial regime in the problem specification.
They do not assert the same positive lower formula at $\Delta=0$: then the
origin is already a global primal minimizer. Nor do they assert a positive
$G$-dependent lower bound at $\kappa=1$, where joint $L$-smoothness and
$L$-strong concavity force the primal--dual cross derivative to vanish.
Dimensions in the lower bounds may be very large but are explicit finite
integers in~\eqref{eq:Ddimension},~\eqref{eq:Bdimensions}, and~\eqref{eq:Adim}.
Every hard oracle is fixed before interaction, and every upper query uses only
one fresh joint-gradient sample at one point. None of the conclusions presumes
that a realized noisy vector is the gradient of a smooth sample function.

\end{document}